\DeclareMathOperator{\argmin}{argmin}
\def\leq{\leqslant}
\def\geq{\geqslant}
\newtheorem{theorem}{Theorem}[section]
\newtheorem{lemma}[theorem]{Lemma}
\newtheorem{proposition}[theorem]{Proposition}
\theoremstyle{definition}
\newtheorem{remark}[theorem]{Remark}
\newcommand{\ensnombre}[1]{\mathbb{#1}}
\newcommand{\N}{\ensnombre{N}}
\newcommand{\Q}{\ensnombre{Q}}
\newcommand{\defeq}{\mathrel{\mathop:}=}
\newcommand{\eqdef}{\mathrel{=}:}
\def \P{\mathbb{P}}
\def \E{\mathbb{E}}
\DeclareMathOperator{\Var}{Var}
\newcommand{\fact}[1]{#1\mathpunct{}!}
\newcommand{\beq} {\begin{eqnarray*}}
\newcommand{\eeq} {\end{eqnarray*}}
\newcommand{\bei}{\begin{itemize}}
\newcommand{\eei}{\end{itemize}}
\def \R {\boldsymbol{R}}
\def \Q {\boldsymbol{Q}}
\def \r {\boldsymbol{r}}
\def \z {\boldsymbol{z}}
\def \e {\boldsymbol{e}}
\def \y {\boldsymbol{y}}
\def \f {\boldsymbol{f}}
\def \F {\boldsymbol{F}}
\def \bbeta {\boldsymbol{\beta}}
\def \W {\boldsymbol{W}}
\def \I {\boldsymbol{I}}
\def \g {\boldsymbol{g}}
\def \h {\boldsymbol{h}}
\title{Cross-validation estimation of covariance parameters under fixed-domain asymptotics }
\author[1]{F. Bachoc\footnote{Corresponding author. francois.bachoc@math.univ-toulouse.fr, Institut de Mathématiques de Toulouse, Université Paul Sabatier, 118 route de Narbonne, 31062 TOULOUSE Cedex 9 }}
\author[1]{A. Lagnoux}
\author[1]{T.M.N. Nguyen}
\affil[1]{Institut de Mathématiques de Toulouse, Université Paul Sabatier, France 
118 route de Narbonne, 31062 TOULOUSE Cedex 9 
\{francois.bachoc,agnes.lagnoux,thi\_mong\_ngoc.nguyen\}@math.univ-toulouse.fr
}
\begin{document}

\maketitle

\begin{abstract}

We consider a one-dimensional Gaussian process having exponential covariance function. Under fixed-domain asymptotics, we prove the strong consistency and asymptotic normality of a cross validation estimator of the microergodic covariance parameter.
In this setting, Ying \cite{Yin1991} proved the same asymptotic properties for the maximum likelihood estimator. Our proof includes several original or more involved components, compared to that of Ying. Also, while the asymptotic variance of maximum likelihood does not depend on the triangular array of observation points under consideration, that of cross validation does, and is shown to be lower and upper bounded. The lower bound coincides with the asymptotic variance of maximum likelihood. We provide examples of triangular arrays of observation points achieving the lower and upper bounds. We illustrate our asymptotic results with simulations, and provide extensions to the case of an unknown mean function. To our knowledge, this work constitutes the first fixed-domain asymptotic analysis of cross validation.
\end{abstract}

\noindent{\it Keywords: Kriging, cross validation, strong consistency, asymptotic normality, spatial sampling, fixed-domain asymptotics}

\section{Introduction}\label{sec:intro}

Kriging \cite{stein99interpolation,rasmussen06gaussian} consists in inferring the values of a Gaussian random field given observations at a finite set of observation points.
It has become a popular method for a
large range of applications, such as geostatistics \cite{matheron70theorie}, numerical code approximation \cite{sacks89design,santner03design,bachoc16improvement} and calibration \cite{paulo12calibration,bachoc14calibration} or global optimization \cite{jones98efficient}.

Before Kriging can be applied, a covariance function must be chosen.
The most common practice is to statistically estimate the covariance function, from a set
of observations of the Gaussian process, and to plug \cite[Ch.6.8]{stein99interpolation} the estimate in the Kriging equations.
Usually, it is assumed that the covariance function belongs to a given parametric family (see \cite{abrahamsen97review} for a review of classical families).
In this case, the estimation boils down to estimating the corresponding covariance parameters. For covariance parameter estimation, maximum likelihood (ML) is the most studied and used method, while cross validation (CV) \cite{sundararajan01predictive,zhang10kriging,bachoc13cross} is an alternative technique. CV has been shown to have attractive properties, compared to ML, when the parametric family of covariance functions is misspecified \cite{bachoc13cross,bachoc16asymptotic}.

There is a fair amount of literature on the asymptotic properties of ML. In this regard, the two main frameworks are increasing-domain and fixed-domain asymptotics \cite[p.62]{stein99interpolation}. Under increasing-domain asymptotics, the average density of observation points is bounded, so that the infinite sequence of observation points is unbounded. Under fixed-domain asymptotics, this sequence is dense in a bounded domain.

Consider first increasing-domain asymptotics. Generally speaking, for all (identifiable) covariance parameters, the ML estimator is consistent and asymptotically normal under some mild regularity conditions. The asymptotic covariance matrix is equal to the inverse of the (asymptotic) Fisher information matrix. This result was first shown in \cite{MarMar1984}, and then extended in different directions in \cite{cressie93asymptotic,cressie96asymptotics,
shaby12tapered,bachoc14asymptotic,furrer16asymptotic}. 

The situation is significantly different under fixed-domain asymptotics. Indeed, two types of covariance parameters can be distinguished: microergodic and non-microergodic parameters \cite{ibragimov78gaussian,stein99interpolation}. A covariance parameter is microergodic if, for two different values of it, the two corresponding Gaussian measures are orthogonal, see \cite{ibragimov78gaussian,stein99interpolation}. It is non-microergodic if, even for two different values of it, the two corresponding Gaussian measures are equivalent. Non-microergodic parameters cannot be estimated consistently, but have an asymptotically negligible impact on prediction \cite{AEPRFMCF,BELPUICF,UAOLPRFUISOS,zhang04inconsistent}. On the other hand, it is at least possible to consistently estimate microergodic covariance parameters, and misspecifying them can have a strong negative impact on prediction.

Under fixed-domain asymptotics, there exist results indicating which covariance parameters are microergodic, and providing the asymptotic properties of the corresponding ML estimator. Most of these available results are specific to particular covariance models. In dimension $d=1$ when the covariance model is exponential, only a reparameterized quantity obtained from the variance and scale parameters is microergodic. It is shown in \cite{Yin1991} that the ML estimator of this microergodic parameter is strongly consistent and asymptotically normal. These results are extended in \cite{CheSimYin2000}, by taking into account measurement errors, and in \cite{chang2017mixed}, by taking into account both measurement errors and an unknown mean function.
When $d>1$ and for a separable exponential covariance function, all the covariance parameters are microergodic, and the asymptotic normality of the ML estimator is proved in \cite{ying93maximum}. Other results in this case are also given in \cite{vdV2010,AbtWel1998}. Consistency of ML is shown as well in \cite{LohLam2000} for the scale parameters of the Gaussian covariance function and in \cite{Loh2005} for all the covariance parameters of the separable Mat\'ern $3/2$ covariance function. Finally, for the entire isotropic Mat\'ern class of covariance functions, all parameters are microergodic for $d>4$ \cite{And2010}, and only reparameterized parameters obtained from the scale and variance are microergodic for $d \leq 3$ \cite{zhang04inconsistent}. In \cite{ShaKau2013}, the asymptotic normality of the ML estimators for these microergodic parameters is proved, from previous results in \cite{DuZhaMan2009} and \cite{WanLoh2011}.
Finally we remark that, beyond ML, quadratic variation-based estimators have also been extensively studied, under fixed-domain asymptotics (see for instance \cite{istas97quadratic}).

In contrast to ML, CV has received less theoretical attention. Under increasing-domain asymptotics, the consistency and asymptotic normality of a CV estimator is proved in \cite{bachoc14asymptotic}. Also, under increasing-domain asymptotics, it is shown in \cite{bachoc16asymptotic} that this CV estimator asymptotically minimizes the integrated square prediction error. To the best of our knowledge, no fixed-domain asymptotic analysis of CV exists in the literature.

In this paper, we provide a first fixed-domain asymptotic analysis of the CV estimator minimizing the CV logarithmic score, see \cite{rasmussen06gaussian} Equation (5.11) and \cite{zhang10kriging}. We focus on the case of the one-dimensional exponential covariance function, which was historically the first covariance function for which the asymptotic properties of ML were derived \cite{Yin1991}. This covariance function is particularly amenable to theoretical analysis, as its Markovian property yields an explicit (matrix-free) expression of the likelihood function. It turns out that the CV logarithmic score can also be expressed in a matrix-free form, which enables us to prove the strong consistency and asymptotic normality of the corresponding CV estimator. We follow the same general proof architecture as in \cite{Yin1991} for ML, but our proof, and the nature of our results, contain several new elements.

In terms of proofs, the random CV logarithmic score, and its derivatives, have more complicated expressions than for ML. [This is because the CV logarithm score is based on the conditional distributions of the observations, from both their nearest left and right neighbors, while the likelihood function is solely based on the nearest left neighbors. See Lemma \ref{lem:Sn} and Lemma 1 in \cite{Yin1991} for details.] As a consequence, the computations are more involved, and some other tools than in \cite{Yin1991} are needed. In particular, many of our asymptotic approximations rely on Taylor expansions of functions of several variables (where each variable is an interpoint distance going to zero, see the proofs for details). In contrast, only Taylor approximations with one variable are needed in \cite{Yin1991}. In addition, we use central limit theorems for dependent random variables, while only independent variables need to be considered in \cite{Yin1991}.

The nature of our asymptotic normality result also differs from that in \cite{Yin1991}. In this reference, the asymptotic variance does not depend on the triangular array of observation points. On the contrary, in our case, different triangular arrays of observation points can yield different asymptotic variances. We exhibit a lower and an upper bound for these asymptotic variances, and provide examples of triangular arrays reaching them. The lower bound is in fact equal to the asymptotic variance of ML in \cite{Yin1991}. Interestingly, the triangular array given by equispaced observation points attains neither the lower nor the upper bound. It is also pointed out in \cite{bachoc14asymptotic} that equispaced observation points need not provide the smallest asymptotic variance for covariance parameter estimation.

Finally, the fact that the asymptotic variance is larger for CV than for ML is a standard finding in the well-specified case considered here, where the covariance function of the Gaussian process does belong to the parametric family of covariance functions under consideration. In contrasts, as mentioned above, CV has attractive properties compared to ML when this well-specified case does not hold \cite{bachoc13cross,bachoc16asymptotic}.

The rest of the paper is organized as follows. In Section \ref{sec:CV}, we present in more details the setting and the CV estimator under consideration. In Section \ref{sec:consistency}, we give our strong consistency result for this estimator. In Section \ref{sec:AN}, we provide the asymptotic normality result, together with the analysis of the asymptotic variance. In Section \ref{sec:num}, we present numerical experiments, illustrating our theoretical findings. In Section \ref{sec:reg}, we extend the results of Sections \ref{sec:consistency} and \ref{sec:AN} to the case of an unknown mean function.
In Section \ref{sec:ccl}, we give a few concluding remarks. All the proofs are postponed to Section \ref{sec:proofs}.

\section{The context and the cross-validation estimators}\label{sec:CV}

We consider a centered Gaussian process $Y$ on $[0,1]$ with covariance function 
\[
K_0(t_1,t_2) = \sigma_0^2 \exp\{- \theta_0 |t_1-t_2|\}
\]
for some fixed and unknown parameters $\theta_0 > 0$ and $\sigma_0^2 >0$. This process is commonly known as the Ornstein-Uhlenbeck process. It satisfies the following stochastic differential equation, called the Langevin's equation,
\[
dY(t)=-\theta_0 Y(t) dt+\sqrt{2\theta_0}\sigma_0 dB(t),
\]  
where $(B(t))_t$ denotes a standard Brownian motion process. The Ornstein-Uhlenbeck process has been widely used to
model physical, biological, social, and many other phenomena. It also possesses
many useful mathematical properties that simplify the analysis.

We introduce the parametric set of covariance functions $\{ K_{ \theta,\sigma^2}, a \leq \theta \leq A, b \leq \sigma^2 \leq B \}$ for some fixed $0 < a \leq A < + \infty$
and $0 < b \leq B < + \infty$ where 
\[
K_{ \theta,\sigma^2}(t_1,t_2) = \sigma^2 \exp\{- \theta |t_1-t_2|\}.
\]

For any $n \in \mathbb{N}$, we consider a design of observation points $\{s_1,...,s_n\}$. Without loss of generality, we may assume that  $0 = s_1 <...< s_n = 1$. Similarly as in \cite{Yin1991}, there is no need to assume that the sequences of observation points are nested.
We consider the vector of observations at locations $s_1,...,s_n$, $(Y(s_1),\ldots, Y(s_n))'$.   
Now let $\Delta_i \defeq s_i - s_{i-1}$, for $i=2,...,n$, and $y_i \defeq Y(s_i)$, for $i=1,...,n$. For ease of redaction, we do not mention in $s_i$ and $\Delta_i$ the dependency in $n$. We define $\R_{\theta}$ as the variance-covariance matrix of $(y_1,...,y_n)'$ under covariance function $K_{\theta,1}$,
\[
\R_{\theta}\defeq \left(\begin{matrix} 1& e^{-\theta\Delta_2} & \cdots & e^{-\theta\sum\limits_{i=2}^n\Delta_{i}} \\
e^{-\theta\Delta_2}& 1 &  \cdots& e^{-\theta\sum\limits_{i=3}^n\Delta_{i}}\\
\vdots& \vdots & \ddots& \vdots\\
e^{-\theta\sum\limits_{i=2}^n\Delta_{i}}&  
e^{-\theta\sum\limits_{i=3}^n\Delta_{i}} & \cdots & 1\\
\end{matrix}\right).
\]

From \cite{AntZag2010}, we have
\begin{equation} \label{eq:tridiagonal:inverse}
\R_{\theta}^{-1}=\left(\begin{matrix} \frac{1}{1-e^{-2\theta\Delta_2}}&\frac{-e^{-\theta\Delta_2}}{1-e^{-2\theta\Delta_2}}& 0&\cdots&0\\
\frac{-e^{-\theta\Delta_2}}{1-e^{-2\theta\Delta_2}}& \frac{1}{1-e^{-2\theta\Delta_2}}+\frac{e^{-2\theta\Delta_3}}{1-e^{-2\theta\Delta_3}}&\ddots& \ddots &\vdots\\
0 & \ddots & \ddots &  & 0\\
\vdots & \ddots &  & \frac{1}{1-e^{-2\theta\Delta_{n-1}}}+\frac{e^{-2\theta\Delta_n}}{1-e^{-2\theta\Delta_n}}& \frac{-e^{-\theta\Delta_n}}{1-e^{-2\theta\Delta_n}}\\
0& \cdots & 0 & \frac{-e^{-\theta\Delta_n}}{1-e^{-2\theta\Delta_n}}& 
\frac{1}{1-e^{-2\theta\Delta_n}}
\end{matrix} \right).
\end{equation}

We now address the CV estimators of $\theta_0$ and $\sigma^2_0$ considered in \cite{rasmussen06gaussian,zhang10kriging}. Let 
\[
\hat{Y}_{\theta,-i}(s_i) = \mathbb{E}_{ \theta,\sigma^2} ( Y(s_i)|Y(s_{1}),...,Y(s_{i-1}),Y(s_{i+1}),...,Y(s_{n}) ),
\]
where the conditional expectation $\mathbb{E}_{\theta,\sigma^2}$ is calculated assuming that $Y$ is centered and has covariance function $K_{\theta,\sigma^2}$. We remark that $\hat{Y}_{\theta,-i}(s_i)$ does not depend on $\sigma^2$.
We define similarly 
\[
\hat{\sigma}^2_{ \theta,\sigma^2,-i}(s_i) = \Var_{ \theta,\sigma^2} ( Y(s_i)|Y(s_{1}),...,Y(s_{i-1}),Y(s_{i+1}),...,Y(s_{n}) ).
\]
Then, the CV estimators are given by
\[
(\hat{\theta},\hat{\sigma}^2)
\in \underset{a \leq \theta \leq A,b \leq \sigma^2 \leq B}{\argmin}
S_n( \theta,\sigma^2),
\]
where
\begin{equation}\label{eq:defSn}
S_n( \theta,\sigma^2) = \sum_{i=1}^n 
\left[
 \log( \hat{\sigma}^2_{ \theta,\sigma^2,-i}(s_i) ) 
 + \frac{ (y_i -\hat{Y}_{\theta,-i}(s_i) )^2 }{ \hat{\sigma}^2_{ \theta,\sigma^2,-i}(s_i) }
 \right]
\end{equation}
is the logarithmic score. 
The rationale for minimizing the logarithmic score is that $ \log(2 \pi) + \log( \hat{\sigma}^2_{ \theta,\sigma^2,-i}(s_i) ) 
 + \frac{ (y_i -\hat{Y}_{\theta,-i}(s_i) )^2 }{ \hat{\sigma}^2_{ \theta,\sigma^2,-i}(s_i) }$ is equal to $-2$ times the conditional log-likelihood of $y_i$, given $(y_1,...,y_{i-1},y_{i+1},...,y_n)'$, with covariance parameters $\theta,\sigma^2$. The term cross-validation underlines the fact that we consider leave-one-out quantities.

As already known \cite{ibragimov78gaussian,Yin1991,zhang04inconsistent}, it is not possible to consistently estimate simultaneously $\theta_0$ and $\sigma_0^2$ (the ML estimator of $\theta_0$ is a non-degenerate random variable, even if $(Y(t))_{t \in [0,1]}$ is observed continuously \cite{ZhaZim2005}), but it is possible to consistently estimate $\theta_0 \sigma_0^2$. 
As a consequence, we have considered three different cases, as in \cite{Yin1991}. (i) Set $\sigma^2=\sigma_1^2$ in \eqref{eq:defSn} with $\sigma_1^2>0$ being a predetermined constant and consider the CV estimator $\hat{\theta}_1$ of $\theta_1=\theta_0 \sigma_0^2/\sigma_1^2$ that minimizes \eqref{eq:defSn} with $\sigma^2=\sigma_1^2$. (ii)  Set $\theta=\theta_2$ in \eqref{eq:defSn} with $\theta_2>0$ being a predetermined constant and consider the CV estimator $\hat{\sigma}_2^2$ of $\sigma_2^2=\theta_0 \sigma_0^2/\theta_2$ that minimizes \eqref{eq:defSn} with $\theta=\theta_2$.
(iii) Consider the estimator $\hat \theta\hat\sigma^2$ of $\theta_0\sigma_0^2$, where $\hat{\theta}$ and $\hat{\sigma}^2$  are the CV estimators of $\theta_0$ and $\sigma_0^2$.

Ying \cite{Yin1991} considers the ML estimators of $\theta$ and $\sigma^2$ and establishes their consistency and asymptotic normality. We carry out a similar asymptotic analysis for the above CV estimators. More precisely, we prove that $\hat \theta \hat \sigma^2$ (resp. $\hat \theta_1$ and $\hat \sigma_2^2$) converges almost surely to $\theta_0 \sigma_0^2$ (resp. $\theta_1$ and $\sigma_2^2$) in the next section. In section \ref{sec:AN}, we establish that, for a sequence $\tau_n$ which is lower and upper-bounded,
$(\sqrt{n} / [ \theta_1 \tau_n] )(\hat \theta_1-\theta_1)$, $(\sqrt{n} / [ \sigma_2^2 \tau_n] )(\hat \sigma_2^2-\sigma_2^2)$ and 
$(\sqrt{n} / [ \theta_0 \sigma_0^2 \tau_n ] )( \hat \theta \hat \sigma^2-\theta_0 \sigma_0^2)$ all converge in distribution to a standard Gaussian random variable. We remark that the asymptotic variance $\tau_n^2$ depends on how the underlying design points $\{s_1,...,s_n\}$ are chosen. On the contrary, considering  the ML estimators \cite{Yin1991}, the asymptotic variance is the same for any triangular array of design points.

\section{Consistency}\label{sec:consistency}

In this section, we establish the strong consistency of the CV estimator $\hat{\theta}\hat{\sigma}^2$ of $\theta_0\sigma_0^2$ described in the previous section. In that view, we consider $S_n(\theta,\sigma^2)$ defined by \eqref{eq:defSn}. As done in \cite{Yin1991}, we base our analysis on the Markovian property of the Ornstein-Uhlenbeck process in order to handle the fact that, as $n$ increases, the observed sample $(y_1,\ldots, y_n)'$ becomes more and more correlated. We have
\begin{equation} \label{eq:loo:y:pred}
\hat{Y}_{\theta,-i}(s_i)
 =
  - \sum_{\substack{j=1,\ldots,n; \\ j \neq i}} 
  \frac{
  (\R_{\theta}^{-1})_{ij}}{(\R_{\theta}^{-1})_{ii}
  }
  y_j
\end{equation}
and
\[
\hat{\sigma}^2_{ \theta,\sigma^2,-i}(s_i)
=
\frac{\sigma^2}{(\R_{\theta}^{-1})_{ii}},
\]
from \cite{zhang10kriging,bachoc13cross,dubrule83cross}. Then, using Equation \eqref{eq:tridiagonal:inverse}, we get the following lemma after some tedious computations.

\begin{lemma}[Logarithmic score]\label{lem:Sn} With $S_n( \theta,\sigma^2)$ as in \eqref{eq:defSn}, we have
\begin{eqnarray*}
S_n( \theta,\sigma^2) & = & n \log(\sigma^2)+\log( 1-e^{-2 \theta \Delta_2} ) +  \log( 1-e^{-2 \theta \Delta_n} )\\
& & + \frac{ (y_1 - e^{- \theta \Delta_2} y_2 )^2 }{\sigma^2 (1-e^{-2 \theta \Delta_2})}    + \frac{ (y_n - e^{- \theta \Delta_n} y_{n-1} )^2 }{ \sigma^2 (1-e^{-2 \theta \Delta_n}) }
 - \sum_{i=2}^{n-1} \log
\left(
 \frac{1}{
 1-e^{- 2 \theta \Delta_i}
 }
  + \frac{e^{- 2 \theta \Delta_{i+1} }
 }{
 1-e^{- 2 \theta \Delta_{i+1}}
 } 
 \right)
 \\
 &  &  + \frac{1}{\sigma^2} \sum_{i=2}^{n-1} 
 \left[
  \frac{1}{
 1-e^{- 2 \theta \Delta_i}
 }
  + \frac{e^{- 2 \theta \Delta_{i+1} }
 }{
 1-e^{- 2 \theta \Delta_{i+1}}
 } 
 \right]
 \left[
 y_i
 -
 \frac{
   \frac{e^{- \theta \Delta_i}}{
 1-e^{- 2 \theta \Delta_i}
 } y_{i-1}
  + \frac{e^{-\theta \Delta_{i+1} }
 }{
 1-e^{- 2 \theta \Delta_{i+1}}
 } y_{i+1}  
 }{
  \frac{1}{
 1-e^{- 2 \theta \Delta_i}
 }
  + \frac{e^{- 2 \theta \Delta_{i+1} }
 }{
 1-e^{- 2 \theta \Delta_{i+1}}
 } 
 }
 \right]^2.
\end{eqnarray*}
\end{lemma}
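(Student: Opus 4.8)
The plan is to prove this by direct substitution of the explicit leave-one-out formulas into the definition \eqref{eq:defSn} of $S_n$, exploiting the tridiagonal structure of $\R_\theta^{-1}$ given in \eqref{eq:tridiagonal:inverse}. The crucial observation is that since $\R_\theta^{-1}$ is tridiagonal, in the prediction formula \eqref{eq:loo:y:pred} all coefficients $(\R_\theta^{-1})_{ij}$ with $|i-j|\geq 2$ vanish, so that $\hat{Y}_{\theta,-i}(s_i)$ depends only on the two nearest neighbors $y_{i-1}$ and $y_{i+1}$ (with the obvious modification at the two endpoints, where only one neighbor is present). This is what reduces each summand to an explicit expression in the interpoint distances $\Delta_i$.

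First I would read off from \eqref{eq:tridiagonal:inverse} the relevant entries: for an interior index $i \in \{2,\dots,n-1\}$ the diagonal entry is $(\R_\theta^{-1})_{ii} = \frac{1}{1-e^{-2\theta\Delta_i}} + \frac{e^{-2\theta\Delta_{i+1}}}{1-e^{-2\theta\Delta_{i+1}}}$, while the sub- and super-diagonal entries are $(\R_\theta^{-1})_{i,i-1} = -\frac{e^{-\theta\Delta_i}}{1-e^{-2\theta\Delta_i}}$ and $(\R_\theta^{-1})_{i,i+1} = -\frac{e^{-\theta\Delta_{i+1}}}{1-e^{-2\theta\Delta_{i+1}}}$. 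Substituting these into \eqref{eq:loo:y:pred} and into the variance formula $\hat{\sigma}^2_{\theta,\sigma^2,-i}(s_i) = \sigma^2/(\R_\theta^{-1})_{ii}$ yields exactly the interior summand appearing in the statement: the term $\log(\hat{\sigma}^2_{\theta,\sigma^2,-i}(s_i))$ contributes $\log(\sigma^2) - \log\big((\R_\theta^{-1})_{ii}\big)$, and the standardized residual contributes $\frac{1}{\sigma^2}(\R_\theta^{-1})_{ii}\,(y_i - \hat{Y}_{\theta,-i}(s_i))^2$, with $\hat{Y}_{\theta,-i}(s_i)$ equal to the displayed weighted average of $y_{i-1}$ and $y_{i+1}$.

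Next I would treat the two boundary indices separately, since the first and last rows of \eqref{eq:tridiagonal:inverse} carry only one off-diagonal entry. For $i=1$, using $(\R_\theta^{-1})_{11} = \frac{1}{1-e^{-2\theta\Delta_2}}$ and $(\R_\theta^{-1})_{12} = -\frac{e^{-\theta\Delta_2}}{1-e^{-2\theta\Delta_2}}$, the prediction collapses to $\hat{Y}_{\theta,-1}(s_1) = e^{-\theta\Delta_2}y_2$ and the conditional variance to $\sigma^2(1-e^{-2\theta\Delta_2})$; this produces the terms $\log(1-e^{-2\theta\Delta_2})$ and $\frac{(y_1 - e^{-\theta\Delta_2}y_2)^2}{\sigma^2(1-e^{-2\theta\Delta_2})}$. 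The symmetric computation at $i=n$ gives the corresponding $\Delta_n$ terms. Finally, collecting the single $\log(\sigma^2)$ produced by each of the $n$ summands yields the leading $n\log(\sigma^2)$.

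The main difficulty here is not conceptual but purely organizational: the proof is a bookkeeping exercise, and the only real care required is in keeping the boundary summands ($i=1$ and $i=n$) separate from the interior ones, because the end rows of $\R_\theta^{-1}$ have a structurally different form than the interior rows. Once the entries are substituted correctly, summing over $i$ and grouping the logarithmic terms, the quadratic residual terms, and the two boundary contributions reproduces the claimed formula verbatim.
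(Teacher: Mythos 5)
Your proposal is correct and is essentially the paper's own proof: the paper derives Lemma \ref{lem:Sn} precisely by inserting the leave-one-out formulas \eqref{eq:loo:y:pred} and $\hat{\sigma}^2_{\theta,\sigma^2,-i}(s_i)=\sigma^2/(\R_{\theta}^{-1})_{ii}$ into \eqref{eq:defSn} and reading off the tridiagonal entries of \eqref{eq:tridiagonal:inverse}, which is exactly the substitution-and-bookkeeping argument you describe, including the separate treatment of the boundary indices $i=1$ and $i=n$. You have simply written out the ``tedious computations'' that the paper leaves implicit.
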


Based on Lemma \ref{lem:Sn}, we prove the following theorem in Section \ref{ssec:proof_cons}. 
\begin{theorem}[Consistency]\label{th:consistency} 
Assume that 
\begin{align}\label{ass:delta}
\underset{n\to +\infty}\limsup \underset{i=2,\ldots,n}\max \Delta_i=0.
\end{align}
Let $J = [a,A] \times [b, B]$, where $a$, $A$, $b$ and $B$ are fixed and have been defined in the previous section. Assume that there exists $(\tilde{\theta},\tilde{\sigma}^2)$ in $J$ so that $\tilde{\theta} \tilde{\sigma}^2 = \theta_0 \sigma_0^2$.
Define 
$(\hat{\theta},\hat{\sigma}^2) \in J$ as a solution of 
\begin{align}\label{eq:min_couple}
S_n(\hat{\theta},\hat{\sigma}^2)=\underset{( \theta,\sigma^2)\in J}{\min} S_n( \theta,\sigma^2).
\end{align}
Then $(\hat{\theta},\hat{\sigma}^2)$ exists and 
\begin{align}\label{eq:cv_produit}
\hat{\theta}\hat{\sigma}^2 \overset{a.s.}{\to} \theta_0\sigma_0^2.
\end{align}
In particular, let $\sigma_1^2> 0$ and $\theta_2 > 0$ be predetermined constants satisfying $  \sigma_0^2\theta_0/\sigma_1^2 \in [a,A]$ and
$\sigma_0^2\theta_0/\theta_2 \in [b,B]$. Define $\hat{\theta}_1 \in [a,A]$ and  $\hat{\sigma}_2^2 \in [b,B]$ as solutions of 
\begin{align}\label{eq:min_theta}
S_n(\hat{\theta}_1,\sigma_1^2)=\underset{\theta\in [a,A]}{\min} S_n(\theta,\sigma_1^2)
\end{align}
and
\begin{align}\label{eq:min_sigma}
S_n(\theta_2,\hat{\sigma}_2^2)=\underset{\sigma^2\in [b, B]}{\min} S_n( \theta_2,\sigma^2).
\end{align}
Then $\hat{\theta}_1 \overset{a.s.}{\to} \theta_1\defeq \sigma_0^2\theta_0/\sigma_1^2$ and $\hat{\sigma}_2^2 \overset{a.s.}{\to} \sigma_2^2\defeq \sigma_0^2\theta_0/\theta_2$.
\end{theorem}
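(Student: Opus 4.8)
The plan is to follow the general architecture of \cite{Yin1991} while accommodating the more involved expression of Lemma~\ref{lem:Sn}. First I would split the logarithmic score as
\[
S_n(\theta,\sigma^2) = n\log(\sigma^2) + L_n(\theta) + \frac{1}{\sigma^2}\,Q_n(\theta),
\]
where $L_n(\theta)$ collects the three logarithmic (determinant-type) terms that depend only on $\theta$ and the design, and $Q_n(\theta)$ collects every quadratic-in-$y$ term. Since $\theta\ge a>0$ and the $\Delta_i$ are strictly positive, all denominators $1-e^{-2\theta\Delta_i}$ are positive, so $S_n$ is continuous and finite on the compact set $J$ and a minimizer exists. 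The crux is to prove that, almost surely and uniformly over $(\theta,\sigma^2)\in J$,
\[
\frac{1}{n}S_n(\theta,\sigma^2) = f(\theta\sigma^2) + \rho_n + o(1),
\qquad
f(\lambda) \defeq \log(2\lambda) + \frac{\theta_0\sigma_0^2}{\lambda},
\]
where $\rho_n$ depends on the design alone and hence is common to all parameter values. The function $f$ is strictly convex in $\log\lambda$ and uniquely minimized at $\lambda=\theta_0\sigma_0^2$; since the additive term $\rho_n$ cancels in every comparison of values of $S_n$, comparing the minimizer with the point $(\tilde\theta,\tilde\sigma^2)$ gives $f(\hat\theta\hat\sigma^2)\le f(\theta_0\sigma_0^2)+o(1)$, and a standard $\argmin$-consistency argument on the compact range $[ab,AB]$ then yields $\hat\theta\hat\sigma^2\overset{a.s.}{\to}\theta_0\sigma_0^2$.

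For the deterministic term $L_n(\theta)$ I would Taylor expand the diagonal precisions as $\Delta_i,\Delta_{i+1}\to0$, obtaining $(\R_\theta^{-1})_{ii}=\frac{1}{2\theta}\big(\frac{1}{\Delta_i}+\frac{1}{\Delta_{i+1}}\big)\big(1+O(\Delta_i+\Delta_{i+1})\big)$, so that
\[
\frac{1}{n}L_n(\theta) = \log(2\theta) - \frac{1}{n}\sum_{i=2}^{n-1}\log\Big(\tfrac{1}{\Delta_i}+\tfrac{1}{\Delta_{i+1}}\Big) + o(1)
\]
uniformly in $\theta\in[a,A]$; here the $\theta$-independent middle term is absorbed into $\rho_n$, the two boundary terms contribute $O(1)$, and the remainders are uniformly controlled because $\sum_i(\Delta_i+\Delta_{i+1})=O\big(\sum_i\Delta_i\big)=O(1)$.

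The delicate part, and the place where the analysis genuinely departs from \cite{Yin1991}, is $Q_n(\theta)$; I expect this to be the main obstacle. Writing $y_i=e^{-\theta_0\Delta_i}y_{i-1}+\varepsilon_i$ with independent innovations $\varepsilon_i\sim\mathcal N\big(0,\sigma_0^2(1-e^{-2\theta_0\Delta_i})\big)$, the generic interior summand of $Q_n(\theta)$ is a quadratic form in $(y_{i-1},y_i,y_{i+1})$, because the leave-one-out predictor $\hat Y_{\theta,-i}(s_i)$ combines \emph{both} neighbours. A multivariate Taylor expansion in $(\Delta_i,\Delta_{i+1})$ shows its expectation equals $\theta_0\sigma_0^2/\theta+o(1)$, with summable remainders, whence $\E[Q_n(\theta)]=n\,\theta_0\sigma_0^2/\theta+o(n)$. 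For the almost sure statement $\frac1n(Q_n-\E Q_n)\to0$, I would isolate the leading part of each summand, which depends only on $(\varepsilon_i,\varepsilon_{i+1})$ and thus forms a finitely dependent sequence of quadratic forms in independent Gaussians (summands $i$ and $j$ being independent once $|i-j|\ge2$); its partial sums obey a strong law, while the lower-order remainders — involving products such as $\varepsilon_i y_{i-1}$ — are handled by moment bounds, using that Gaussian quadratic forms possess finite moments of all orders. Uniformity in $\theta$ follows from a uniform-in-$n$ Lipschitz bound on $\theta\mapsto\frac1nQ_n(\theta)$ together with pointwise convergence along a countable dense set.

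Once the uniform expansion of $\frac1nS_n$ is established, the two ``in particular'' statements follow by restriction. Fixing $\sigma^2=\sigma_1^2$ gives $\frac1nS_n(\theta,\sigma_1^2)-\rho_n\to f(\theta\sigma_1^2)$ uniformly in $\theta\in[a,A]$, a strictly unimodal function of $\theta$ minimized exactly at $\theta=\theta_1=\sigma_0^2\theta_0/\sigma_1^2$, which lies in $[a,A]$ by hypothesis; hence $\hat\theta_1\overset{a.s.}{\to}\theta_1$. The convergence $\hat\sigma_2^2\overset{a.s.}{\to}\sigma_2^2$ is obtained identically by fixing $\theta=\theta_2$.
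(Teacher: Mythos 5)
Your proposal is correct and follows essentially the same route as the paper's own proof: the paper likewise Taylor-expands the tridiagonal precision entries, rewrites the quadratic part through the innovations $W_{i,0}=y_i-e^{-\theta_0\Delta_i}y_{i-1}$, establishes uniform almost-sure concentration via strong laws for weighted independent and one-dependent triangular arrays (the cross terms being handled by splitting the index by parity and invoking a weighted SLLN), and concludes by the observation that $x-1-\log x$, with $x=\theta_0\sigma_0^2/(\theta\sigma^2)$, is bounded away from zero when $x$ is bounded away from $1$ (the paper phrases your expansion as $S_n(\theta,\sigma^2)-S_n(\tilde{\theta},\tilde{\sigma}^2)=n[x-1-\log x]+o(n)$ uniformly, and obtains the ``in particular'' statements as the special cases $b=B=\sigma_1^2$ and $a=A=\theta_2$). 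The one slip is your claim that the two boundary terms contribute $O(1)$: $\log(1-e^{-2\theta\Delta_2})\approx\log(2\theta\Delta_2)$ can be arbitrarily negative because $\Delta_2$ may decay super-exponentially fast, but its divergent part $\log\Delta_2$ is design-only, so it should simply be absorbed into your $\rho_n$, where it cancels in every comparison of parameter values — exactly how the paper disposes of it, by bounding the $\theta$-dependent ratio $\log\bigl[(1-e^{-2\theta\Delta_2})/(1-e^{-2\tilde{\theta}\Delta_2})\bigr]=O(1)$ uniformly over $[a,A]$.
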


\begin{remark}\label{rem:consistency}
It is worth remarking that the asymptotically preponderant terms in Lemma \ref{lem:Sn} are the same as those obtained in the context of ML estimation (see \cite{Yin1991} and Section \ref{ssec:proof_cons} for more details).
\end{remark}

\section{Asymptotic normality}\label{sec:AN}

Once the consistency has been established, the natural question of the convergence speed arises. We address this point in this section. We first provide a central limit result in the following theorem.

\begin{theorem}[Central Limit Theorem]\label{th:asymptotic}
Consider the same notation and assumptions as in Theorem \ref{th:consistency}.
Assume further that either $aB < \theta_0 \sigma_0^2$; $Ab > \theta_0 \sigma_0^2$ or $aB > \theta_0 \sigma_0^2$; $Ab < \theta_0 \sigma_0^2$ hold.
Then the estimators are asymptotically normal. More precisely, we have
\begin{equation}
\frac{\sqrt{n}}{\theta_0\sigma_0^2\tau_{n}}( \hat \theta \hat \sigma^2-\theta_0 \sigma_0^2) \xrightarrow[n\to \infty]{\mathcal D} \mathcal N (0,1).\label{eq:AN}
\end{equation}
Also, when $(\sigma_0^2 \theta_0) / \sigma_1^2 \in (a,A)$ we have
\begin{equation}
\frac{\sqrt{n}}{\theta_1\tau_{n}}(\hat \theta_1 -\theta_1) \xrightarrow[n\to \infty]{\mathcal D} \mathcal N (0,1).\label{eq:AN_1}\\
\end{equation}
Finally, when $(\sigma_0^2 \theta_0) / \theta_2 \in (b,B)$ we have
\begin{equation}
\frac{\sqrt{n}}{\sigma_2^2\tau_{n}}( \hat \sigma_2^2-\sigma_2^2) \xrightarrow[n\to \infty]{\mathcal D} \mathcal N (0,1).\label{eq:AN_2}\\
\end{equation}
The quantity $\tau_n^2$ depends on how the underlying design points $\{s_1,\ldots,s_n\}$ have been chosen. More precisely,
\begin{align} \label{eq:tau:n}
\tau_n^2=   \frac{2}{n}\sum_{i=3}^{n-1} \left[\left(\frac{\Delta_{i+1}}{\Delta_i+\Delta_{i+1}}+\frac{\Delta_{i-1}}{\Delta_i+\Delta_{i-1}}\right)^2
+2 \frac{\Delta_i\Delta_{i+1}}{(\Delta_i+\Delta_{i+1})^2} \right].
\end{align}

\end{theorem}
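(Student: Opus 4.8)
The plan is to treat all three estimators as M-estimators defined by the first-order stationarity conditions of the explicit logarithmic score of Lemma~\ref{lem:Sn}, and to run the classical Taylor-expansion-of-the-score argument; the two nonstandard features are that every limiting quantity is a triangular-array average over the vanishing spacings $\Delta_i$, and that the leading score is a sum of \emph{dependent} summands. I would first treat \eqref{eq:AN_1}, the case $\sigma^2=\sigma_1^2$ fixed, which is the cleanest. Since $\theta_1\in(a,A)$ by hypothesis and $\hat\theta_1\to\theta_1$ almost surely by Theorem~\ref{th:consistency}, for $n$ large $\hat\theta_1$ is an interior minimizer, so $\partial_\theta S_n(\hat\theta_1,\sigma_1^2)=0$. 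A one-variable Taylor expansion in $\theta$ gives
\[
\sqrt{n}\,(\hat\theta_1-\theta_1)=-\,\frac{n^{-1/2}\,\partial_\theta S_n(\theta_1,\sigma_1^2)}{n^{-1}\,\partial_\theta^2 S_n(\bar\theta,\sigma_1^2)},
\]
with $\bar\theta$ between $\hat\theta_1$ and $\theta_1$. Everything thus reduces to (a) a central limit theorem for the score $n^{-1/2}\partial_\theta S_n(\theta_1,\sigma_1^2)$, and (b) an almost-sure limit for the normalized Hessian $n^{-1}\partial_\theta^2 S_n(\cdot,\sigma_1^2)$, controlled uniformly on a shrinking neighbourhood of $\theta_1$ so that the random $\bar\theta$ may be replaced by $\theta_1$.

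The heart of the argument is the score. Writing each observation through the standardized innovations $z_i=(y_i-e^{-\theta_0\Delta_i}y_{i-1})/\sqrt{\sigma_0^2(1-e^{-2\theta_0\Delta_i})}$, which are i.i.d.\ $\mathcal N(0,1)$ under the true law by the Markov property, I would differentiate the formula of Lemma~\ref{lem:Sn} and Taylor expand each summand \emph{simultaneously} in the three small quantities $\Delta_{i-1},\Delta_i,\Delta_{i+1}$. This several-variable expansion isolates the leading contribution of the $i$th term as a quadratic form in $z_{i-1},z_i,z_{i+1}$ whose coefficients are built from the ratios $\Delta_{i+1}/(\Delta_i+\Delta_{i+1})$ and $\Delta_{i-1}/(\Delta_i+\Delta_{i-1})$ — exactly the leave-one-out interpolation weights that surface in \eqref{eq:tau:n}. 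Because the CV predictor of $y_i$ uses both neighbours, adjacent summands share an innovation, so the sequence is $m$-dependent rather than independent; this is the structural departure from the ML analysis of \cite{Yin1991}.

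I would then apply a central limit theorem for $m$-dependent triangular arrays (of Hoeffding--Robbins type, or via a big-block/small-block martingale decomposition), the required Lyapunov moment bound following from the $L^p$-boundedness of the $z_i$ together with \eqref{ass:delta}. The limiting variance is the sum of the individual variances of the quadratic forms and the covariances of adjacent summands: the former reproduces the square $\big(\Delta_{i+1}/(\Delta_i+\Delta_{i+1})+\Delta_{i-1}/(\Delta_i+\Delta_{i-1})\big)^2$ and the latter the cross term $2\Delta_i\Delta_{i+1}/(\Delta_i+\Delta_{i+1})^2$ in \eqref{eq:tau:n}. A parallel law of large numbers for the same array gives $n^{-1}\partial_\theta^2 S_n(\theta_1,\sigma_1^2)\to I_n$ for an explicit bounded deterministic $I_n$, and Slutsky together with the matching of constants yields the variance $\theta_1^2\tau_n^2$, hence \eqref{eq:AN_1}. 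The proof of \eqref{eq:AN_2} is identical with $\sigma^2$ in place of $\theta$. For the joint estimator \eqref{eq:AN} I would use the full $2\times2$ gradient and Hessian; the difficulty there is that only the product $\theta\sigma^2$ is microergodic, so the Hessian degenerates along $\theta\sigma^2=\mathrm{const}$. The sign conditions $aB<\theta_0\sigma_0^2$, $Ab>\theta_0\sigma_0^2$ (or the reverse) force the minimizer onto a single active box constraint, collapsing the problem to one parameter along the identifiable direction, after which the delta method on $(\theta,\sigma^2)\mapsto\theta\sigma^2$ delivers \eqref{eq:AN} with the same $\tau_n^2$.

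The main obstacle is twofold. First, the several-variable Taylor expansions must be controlled tightly enough to extract the exact leading quadratic forms, since a mishandled lower-order term would corrupt the variance constant and the precise form of \eqref{eq:tau:n}. Second, the $m$-dependent variance computation has to be carried out so that it reproduces \eqref{eq:tau:n} verbatim, including the adjacent-covariance cross term $2\Delta_i\Delta_{i+1}/(\Delta_i+\Delta_{i+1})^2$, which has no counterpart in \cite{Yin1991} and is the source of the fact that $\tau_n^2$ genuinely depends on the design and is bounded strictly above the ML variance.
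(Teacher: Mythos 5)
Your plan for the one--dimensional cases \eqref{eq:AN_1} and \eqref{eq:AN_2} is a legitimate alternative route: the classical M-estimation sandwich (score CLT divided by a uniformly controlled Hessian) would work here, and your identification of the leading quadratic forms in the standardized innovations is consistent with what the paper finds. Note, however, that the paper never inverts a Hessian. It exploits a special multiplicative structure: the uniform expansion of the score (Lemma \ref{lem:psi}) has the form
$\psi(\theta,\sigma^2)=\frac{\theta_0\sigma_0^2}{\theta^2\sigma^2}\sum_{i=3}^{n-1}\bigl[q_i\overline W_{i,0}^2-2\tfrac{\sqrt{\Delta_i\Delta_{i+1}}}{\Delta_i+\Delta_{i+1}}\overline W_{i,0}\overline W_{i+1,0}\bigr]-\frac{n}{\theta}+O_{\P}(1)$,
where the random sum does not depend on $(\theta,\sigma^2)$. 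Setting $\psi(\hat\theta,\hat\sigma^2)=0$ and multiplying by $\hat\theta^2\hat\sigma^2$ then gives $n(\hat\theta\hat\sigma^2-\theta_0\sigma_0^2)=\theta_0\sigma_0^2\sum_i X_i+O_\P(1)$ directly, with no expansion around the estimator at all. This is what makes the joint (product) case tractable.

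The genuine gap is in your treatment of the main statement \eqref{eq:AN}. You claim the sign conditions ``force the minimizer onto a single active box constraint, collapsing the problem to one parameter,'' after which a delta method on $(\theta,\sigma^2)\mapsto\theta\sigma^2$ finishes. This is backwards and, as stated, unworkable. The conditions $aB<\theta_0\sigma_0^2$, $Ab>\theta_0\sigma_0^2$ (or the reverse pair) combined with the almost sure convergence $\hat\theta\hat\sigma^2\to\theta_0\sigma_0^2$ guarantee that one coordinate of the minimizer is eventually \emph{interior} (e.g.\ $\hat\theta=a$ would force $\hat\theta\hat\sigma^2\leq aB<\theta_0\sigma_0^2$, a contradiction), so that the corresponding partial derivative vanishes; they say nothing about the other coordinate touching the boundary, and in general it need not. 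Since the criterion is asymptotically flat along $\theta\sigma^2=\mathrm{const}$, the minimizer can sit anywhere along that curve inside the box, so there is no reduction to a one-parameter problem. Moreover, a delta method requires joint asymptotics for $(\hat\theta,\hat\sigma^2)$, which do not exist here: neither coordinate is consistent (only the product is microergodic), so $(\theta,\sigma^2)\mapsto\theta\sigma^2$ cannot be linearized around a limit. The correct mechanism is the factoring argument described above, which produces the product's asymptotics without any information on the individual coordinates. Two smaller points: (a) since $\tau_n^2$ need not converge (it is only bounded in $[2,4]$ by Proposition \ref{prop:encadrement}), the self-normalized convergence in \eqref{eq:AN} requires an extra device --- the paper argues by contradiction along subsequences where $\tau_n^2$ converges, applying the CLT of \cite{Neumann13} for dependent triangular arrays; your proposal ignores this. (b) In the paper's grouping $X_i=q_i(\overline W_{i,0}^2-1)-2\tfrac{\sqrt{\Delta_i\Delta_{i+1}}}{\Delta_i+\Delta_{i+1}}\overline W_{i,0}\overline W_{i+1,0}$, the adjacent covariances actually vanish; the cross term $2\Delta_i\Delta_{i+1}/(\Delta_i+\Delta_{i+1})^2$ in \eqref{eq:tau:n} is the variance of the within-summand product term, not an adjacent-summand covariance as you assert, so your variance bookkeeping would need care to land exactly on \eqref{eq:tau:n}.
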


\begin{remark} \label{rem:assumptions}
The condition $aB < \theta_0 \sigma_0^2$; $Ab > \theta_0 \sigma_0^2$ or $aB > \theta_0 \sigma_0^2$; $Ab < \theta_0 \sigma_0^2$ ensures that $(\partial / \partial \theta) S_n( \hat{\theta},\hat{\sigma}^2 )$ or $(\partial / \partial \sigma^2) S_n( \hat{\theta},\hat{\sigma}^2 )$ will be equal to zero for $n$ large enough almost surely, by applying Theorem \ref{th:consistency}. This is used in the proof of Theorem \ref{th:asymptotic}. A similar assumption is made in \cite{Yin1991}, where the parameter domain for $(\theta,\sigma^2)$ is $(0,\infty) \times [b,B]$ or $[a,A]  \times (0,\infty)$. 
\end{remark}

In the following proposition, we show that the quantity $\tau_n^2$ in Theorem \ref{th:asymptotic} is lower and upper bounded, so that the rate of convergence is always $\sqrt{n}$ in this theorem.

\begin{proposition}\label{prop:encadrement}
We have, for any choice of the triangular array of design points $\{ s_1,...,s_n\}$ satisfying \eqref{ass:delta},
\begin{align}\label{eq:encadrement}
2\leq \underset{n \to \infty } \liminf  \tau^2_n\leq \underset{n \to \infty }\limsup \tau^2_n\leq 4.
\end{align}
\end{proposition}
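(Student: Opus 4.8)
The plan is to recast the summand in \eqref{eq:tau:n} in terms of the normalized increments $u_i \defeq \Delta_{i+1}/(\Delta_i+\Delta_{i+1}) \in (0,1)$ and $v_i \defeq \Delta_{i-1}/(\Delta_{i-1}+\Delta_i) \in (0,1)$. Since $\Delta_i/(\Delta_i+\Delta_{i+1}) = 1-u_i$, one has $\Delta_i\Delta_{i+1}/(\Delta_i+\Delta_{i+1})^2 = u_i(1-u_i)$, so the $i$-th summand is exactly $(u_i+v_i)^2 + 2u_i(1-u_i)$. The decisive structural remark is that $u$ and $v$ are not independent: a shift of index gives $v_i = 1-u_{i-1}$, because both expressions equal $\Delta_{i-1}/(\Delta_{i-1}+\Delta_i)$. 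This coupling between consecutive terms is what will eventually force a \emph{finite} upper bound.

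Next I would expand and collect. Writing the $i$-th summand as $1 + 4u_i - 2u_{i-1} - u_i^2 + u_{i-1}^2 - 2u_iu_{i-1}$ and summing over $i=3,\dots,n-1$, the part $-u_i^2 + u_{i-1}^2$ telescopes to $u_2^2 - u_{n-1}^2 = O(1)$, while the linear part regroups via $4u_i - 2u_{i-1} - 2u_iu_{i-1} = 2u_i(1-u_{i-1}) + 2(u_i - u_{i-1})$, whose second piece telescopes to $O(1)$. Since every $u_i\in(0,1)$, all boundary contributions are bounded, and dividing by $n$ yields the key reduction
\[
\tau_n^2 = 2 + \frac{4}{n}\sum_{i=3}^{n-1} u_i(1-u_{i-1}) + o(1).
\]
I would emphasize that this identity is purely algebraic; assumption \eqref{ass:delta} plays no role here beyond guaranteeing $\Delta_i>0$, i.e. $u_i\in(0,1)$.

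From the reduction the lower bound is immediate: each term $u_i(1-u_{i-1})$ is nonnegative because $u_i,u_{i-1}\in(0,1)$, whence $\tau_n^2 \geq 2 + o(1)$ and $\liminf_n \tau_n^2 \geq 2$. The upper bound is the real content, and I expect it to be the main obstacle, precisely because a term-by-term estimate is too weak: a single $u_i(1-u_{i-1})$ can approach $1$, so bounding each term separately would only give $\tau_n^2 \lesssim 6$. One must exploit the coupling $v_i = 1-u_{i-1}$. The device I would use is the elementary identity $xy = \tfrac12(x^2+y^2-(x-y)^2)$ with $x=u_i$, $y=1-u_{i-1}$, giving $u_i(1-u_{i-1}) \leq \tfrac12[u_i^2 + (1-u_{i-1})^2]$. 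Summing, then reindexing the second group by $j=i-1$, each overlapping index contributes $u_j^2 + (1-u_j)^2 \leq 1$, and only two boundary indices remain, each contributing at most $1$. Hence $\sum_{i=3}^{n-1} u_i(1-u_{i-1}) \leq n/2 + O(1)$, which turns the reduction into $\tau_n^2 \leq 4 + o(1)$, so $\limsup_n \tau_n^2 \leq 4$. Combined with the lower bound, this establishes \eqref{eq:encadrement}.

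As a consistency check guiding the argument, equispaced points give $u_i\equiv 1/2$ and $\tau_n^2 \to 3$, whereas an alternating pattern of small and large spacings drives $u_i(1-u_{i-1})$ to $1$ on half the indices and to $0$ on the rest, yielding the extremal value $4$; this both validates the reduction and indicates that the constants $2$ and $4$ are sharp, although only the inequalities are required for the proposition.
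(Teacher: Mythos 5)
Your proof is correct, and it rests on the same algebraic identity as the paper's: expanding $q_i^2 + 2a_i(1-a_i)$ with $a_i = \Delta_{i+1}/(\Delta_i+\Delta_{i+1})$ (your $u_i$) and telescoping the exact differences, the paper also arrives at $\tau_n^2 = \frac{2}{n}\sum_{i=3}^{n-1}\bigl(1 + 2a_i(1-a_{i-1})\bigr) + o(1)$, which is precisely your reduction. The two arguments then diverge in how they extract the bounds. For the upper bound, the paper argues globally through the empirical mean $m = \frac{1}{n-3}\sum_i a_i$: it proves the two complementary estimates $\tau_n^2 \le 2+4m+o(1)$ (dropping $-2a_ia_{i-1}\le 0$) and $\tau_n^2 \le 6-4m+o(1)$ (from $1+2a_i(1-a_{i-1}) \le 3-2a_{i-1}$), and concludes since $\sup_{m\in[0,1]}\min(2+4m,\,6-4m)=4$. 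You argue locally instead: Young's inequality $u_i(1-u_{i-1}) \le \frac{1}{2}\bigl[u_i^2+(1-u_{i-1})^2\bigr]$, a shift of index, and the pointwise bound $u^2+(1-u)^2\le 1$ give $\sum_i u_i(1-u_{i-1}) \le n/2 + O(1)$ in one stroke. For the lower bound the paper does not use the reduction at all: it discards the nonnegative terms $2\Delta_i\Delta_{i+1}/(\Delta_i+\Delta_{i+1})^2$ and applies Cauchy--Schwarz, $\frac{2}{n}\sum_i q_i^2 \ge 2\bigl(\frac{1}{n}\sum_i q_i\bigr)^2 + o(1) = 2+o(1)$, whereas for you it is immediate from $u_i(1-u_{i-1})\ge 0$. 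What each buys: your route is more economical (one identity serves both bounds, with only pointwise inequalities) and makes the extremal mechanism transparent, since the average of $u_i(1-u_{i-1})$ lies in $[0,1/2]$ and the two endpoints are exactly realized by the designs of Proposition \ref{prop2}; the paper's mean-value argument, in turn, displays the bound as a function of the average normalized spacing $m$, from which the equispaced case ($u_i\equiv 1/2$, $\tau_n^2\to 3$) is read off directly. Both yield the sharp constants $2$ and $4$.
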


\begin{remark} \label{rem:designs}
\begin{enumerate}
\item The asymptotic variance of the limiting distribution of $\hat \theta \hat \sigma^2-\theta_0 \sigma_0^2$ can be easily estimated. By the previous proposition, this asymptotic variance is always larger than the one of the ML estimator.
Indeed, with $\hat \theta_{ML}$ and $\hat \sigma^2_{ML}$ the ML estimators of $\theta$ and $\sigma^2$ we have $(\sqrt{n}/[\theta_0 \sigma_0^2])( \hat \theta_{ML} \hat \sigma_{ML}^2-\theta_0 \sigma_0^2 ) \xrightarrow[n\to \infty]{\mathcal D} \mathcal N (0,2)$, see \cite{Yin1991}. This fact is quite expected as ML estimates usually perform best when the covariance model is well-specified, as is the case here.
\item As one can check easily, the regular design $\Delta_i\equiv \frac{1}{n-1}$ for all $i=2,\ldots, n$, does not yield the limiting variance of the  ML estimator. Instead, we have $\tau_n^2 \to_{n \to \infty} 3$ for this design. However, in Proposition \ref{prop2}, we exhibit a particular design realizing the limiting variance of the ML estimator: $\underset{n \to \infty } \lim \tau^2_n=2$. 
\end{enumerate}
\end{remark}

In fact, the bounds in \eqref{eq:encadrement} are sharp as shown in the following proposition.

\begin{proposition}\label{prop2}
(i) Let $\{s_1,\dots,s_n\}$ be such that $s_1 = 0$, for $i=2,...,n-1$,
\begin{equation*}
  \Delta_i=
     \begin{cases}
       (1-\gamma_n)\frac{2}{n}  & \text{if $i$ is even}, \\
        \frac{2 \gamma_n}{n}   & \text{if $i$ is odd},
     \end{cases}
\end{equation*}
where $\gamma_n \in (0,1)$, and $\Delta_n = 1 - \sum_{i=2}^{n-1} \Delta_i$. Then, taking $\gamma_n=1/n$, we get
$\tau_n^2 \underset{n \to \infty} \to  4$. 

(ii) Let $\{s_1,\dots,s_n\}$ and $0<\alpha <1$ be such that $s_1=0$, $\Delta_i = 1/(\fact{i})$  for $i= \lfloor n^\alpha \rfloor+ 1, \dots, n$ and $\Delta_2=\dots=\Delta_{\lfloor n^\alpha \rfloor}\equiv (1-r_n)/( \lfloor n^\alpha \rfloor-1)$ with  $r_n\defeq \displaystyle\sum_{i=\lfloor n^\alpha \rfloor + 1}^{n}\Delta_i$. Then $\displaystyle\sum_{i=2}^{n}\Delta_i =1$ and $\tau_n^2 \underset{n \to \infty} \to  2$. 
\end{proposition}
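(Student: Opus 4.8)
The plan is to compute $\tau_n^2$ directly from its closed form \eqref{eq:tau:n}, writing the $i$-th summand as
\[
g_i \defeq \left(\frac{\Delta_{i+1}}{\Delta_i+\Delta_{i+1}}+\frac{\Delta_{i-1}}{\Delta_i+\Delta_{i-1}}\right)^2 + 2\frac{\Delta_i\Delta_{i+1}}{(\Delta_i+\Delta_{i+1})^2},
\]
so that $\tau_n^2=\frac2n\sum_{i=3}^{n-1}g_i$. The crucial observation is that $g_i$ depends only on the two ratios $\Delta_{i+1}/\Delta_i$ and $\Delta_{i-1}/\Delta_i$: setting $u_i=\Delta_{i+1}/(\Delta_i+\Delta_{i+1})$ and $v_i=\Delta_{i-1}/(\Delta_i+\Delta_{i-1})$ one has $g_i=(u_i+v_i)^2+2u_i(1-u_i)$ with $u_i,v_i\in[0,1]$, hence $0\le g_i\le 9/2$. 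In particular any fixed number of summands contributes $O(1/n)$ to $\tau_n^2$. In both parts the strategy is the same: partition $\{3,\dots,n-1\}$ into blocks on which the gap pattern is homogeneous, identify the limit of $g_i$ on each block, count the indices in each block, and discard the $O(1)$ boundary/transition terms.

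For part (i), since $\gamma_n=1/n$, the ratios are constant along each parity class. For an interior even $i$ the two neighbours are odd, so $u_i=v_i=\gamma_n=1/n$ and $g_i=(2\gamma_n)^2+2\gamma_n(1-\gamma_n)=O(1/n)$. For an interior odd $i$ the two neighbours are even, so $u_i=v_i=1-\gamma_n=1-1/n$ and $g_i=4(1-1/n)^2+2(1-1/n)/n\to 4$; these values are exact and independent of $i$, which makes the convergence automatically uniform over each class and avoids any delicate averaging argument. Since $\{3,\dots,n-2\}$ contains $\sim n/2$ odd and $\sim n/2$ even indices, and the only summand involving the remainder $\Delta_n$ is the bounded term $g_{n-1}$, one gets $\sum_{i=3}^{n-1}g_i=\tfrac n2\cdot 4+O(1)=2n+O(1)$, whence $\tau_n^2\to 4$.

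For part (ii), I would first record the book-keeping facts: $r_n=\sum_{i>\lfloor n^\alpha\rfloor}1/i!\le 2/(\lfloor n^\alpha\rfloor+1)!\to 0$, so the equispaced gap $\delta_n=(1-r_n)/(\lfloor n^\alpha\rfloor-1)\sim n^{-\alpha}$ is positive, $\sum_{i=2}^n\Delta_i=(1-r_n)+r_n=1$, and $\max_i\Delta_i\to 0$ so that \eqref{ass:delta} holds. Then I split the sum into three blocks. On the equispaced interior $3\le i\le\lfloor n^\alpha\rfloor-1$ all three of $\Delta_{i-1},\Delta_i,\Delta_{i+1}$ equal $\delta_n$, giving $u_i=v_i=1/2$ and $g_i=3/2$; this block has $\sim n^\alpha$ terms and contributes $\frac2n\cdot\frac32 n^\alpha=3n^{\alpha-1}\to 0$ precisely because $\alpha<1$. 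On the factorial interior $\lfloor n^\alpha\rfloor+2\le i\le n-1$ the ratios are $\Delta_{i+1}/\Delta_i=1/(i+1)$ and $\Delta_{i-1}/\Delta_i=i$, so $u_i=1/(i+2)\to 0$ and $v_i=i/(i+1)\to 1$, yielding $g_i=(u_i+v_i)^2+2u_i(1-u_i)=1+O(1/i)$; this block has $n-\lfloor n^\alpha\rfloor+O(1)$ terms and contributes $\frac2n\big(n-\lfloor n^\alpha\rfloor+O(\log n)\big)\to 2$. The two transition terms $i=\lfloor n^\alpha\rfloor,\lfloor n^\alpha\rfloor+1$ are bounded and contribute $O(1/n)$. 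Summing the three blocks gives $\tau_n^2\to 2$.

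The computations are elementary; the only points that require genuine care, and which I expect to be the main obstacle, are the block-boundary bookkeeping and the one non-uniform estimate. Concretely: in part (ii) one must pin down exactly which $g_i$ straddle the two gap regimes (only $i=\lfloor n^\alpha\rfloor,\lfloor n^\alpha\rfloor+1$) and verify they stay bounded, and in part (i) one must check that $\Delta_n$ enters only through $g_{n-1}$. The single place where uniformity genuinely fails is the factorial block of part (ii), where $g_i=1+O(1/i)$ depends on $i$; there I would control the error by the harmonic bound $\sum_{\lfloor n^\alpha\rfloor+2}^{n-1}O(1/i)=O(\log n)=o(n)$, so the Cesàro average is unaffected and the limit $2$ is exactly recovered.
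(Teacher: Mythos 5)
Your proof is correct and follows essentially the same route as the paper: both evaluate the closed form \eqref{eq:tau:n} by splitting the indices into blocks with homogeneous gap pattern (parity classes in (i); equispaced, factorial and transition blocks in (ii)), and the key ratios $\frac{1}{i+2}$ and $\frac{i}{i+1}$ you derive for the factorial block are exactly those appearing in the paper's displayed bound. The only difference is how (ii) is closed: the paper establishes just the upper bound $\tau_n^2 \le 2 + o(1)$ and then invokes the universal lower bound \eqref{eq:lb1} from Proposition \ref{prop:encadrement}, whereas you compute the factorial block's contribution exactly (each summand $1+O(1/i)$, harmonic error $O(\log n)$) and obtain the limit directly — both are valid, yours being self-contained at the cost of slightly more bookkeeping.
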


\begin{remark}
Intuitively, in Proposition \ref{prop2} (ii), $\Delta_{i+1}$ will be much smaller than $\Delta_i$ for most of the indices $i$,  so that the quantities
$\frac{\Delta_{i+1}}{\Delta_i+\Delta_{i+1}}$ and $\frac{\Delta_i\Delta_{i+1}}{(\Delta_i+\Delta_{i+1})^2}$ in \eqref{eq:tau:n} will be negligible. We refer to the proof of Proposition \ref{prop2} for further details.
\end{remark}

\section{Numerical experiments}\label{sec:num}

We illustrate Theorem \ref{th:asymptotic} by a Monte Carlo simulation. We set $\theta_0 = 3$ and $\sigma_0^2 = 1$ and we consider three sample size values, $n=12,50,200$. For the sample size $n=12$, we address three designs $\{s_1,...,s_n  \}$. The first one is the `minimal' design given by Proposition \ref{prop2} (ii) with $\alpha = 0.5$, which asymptotically achieves the minimal estimation variance. The second one is the `regular' design given by $\{s_1,...,s_n \} = \{ 0, 1/(n-1),...,1\}$. The third one is the `maximal' design given by Proposition \ref{prop2} (i) with $\gamma_n = 1/n$, which asymptotically achieves the maximal estimation variance. These three designs are show in Figure \ref{fig:designs}.
For the sample sizes $n=50$ and $n=200$, the `minimal' design is not amenable to numerical computation anymore, as the values of $\Delta_i$ become too small; so that we only address the `regular' and `maximal' designs.

\begin{figure}[h!]
\centering
\begin{tabular}{c}
\includegraphics[height=4.5cm,width=5cm]{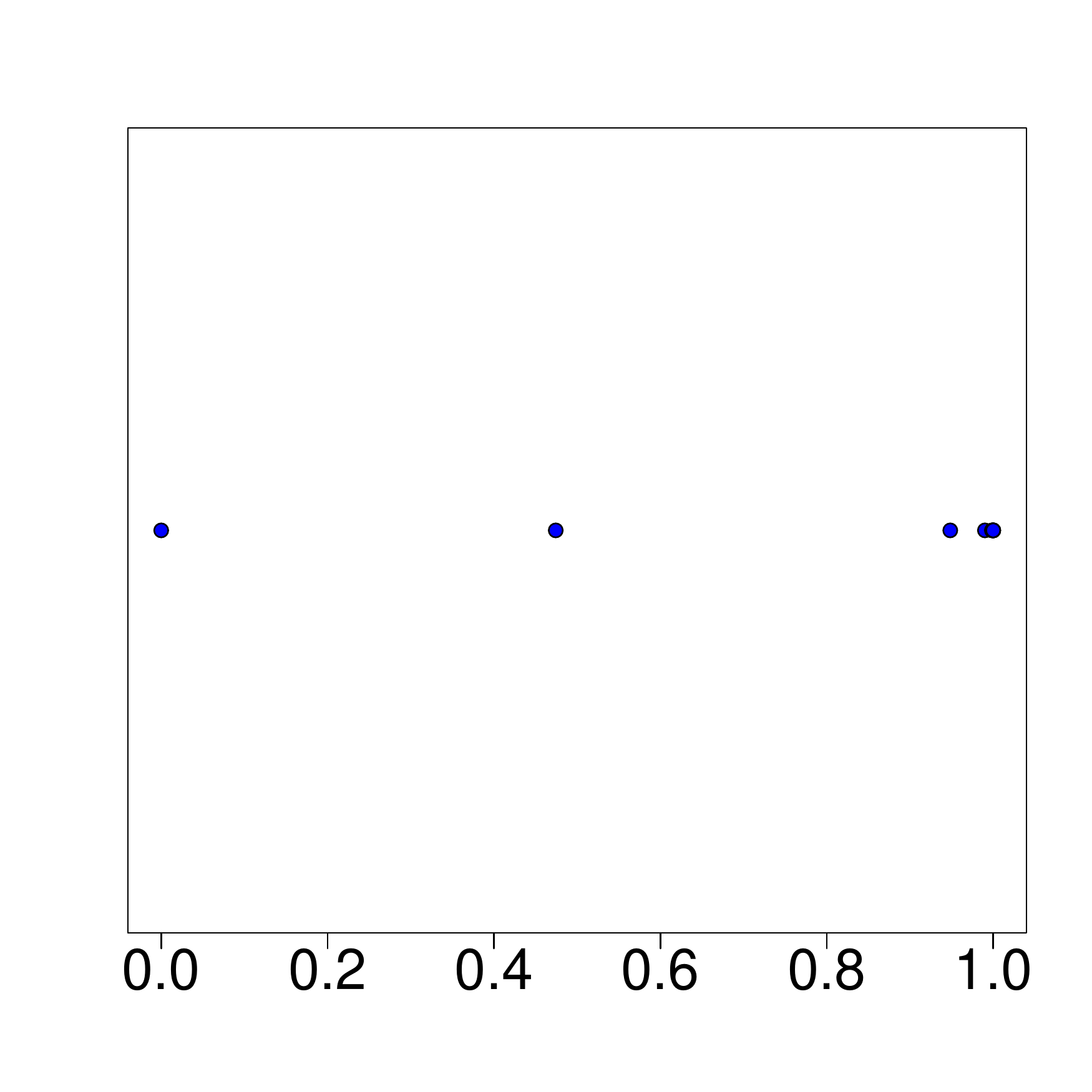}
\includegraphics[height=4.5cm,width=5cm]{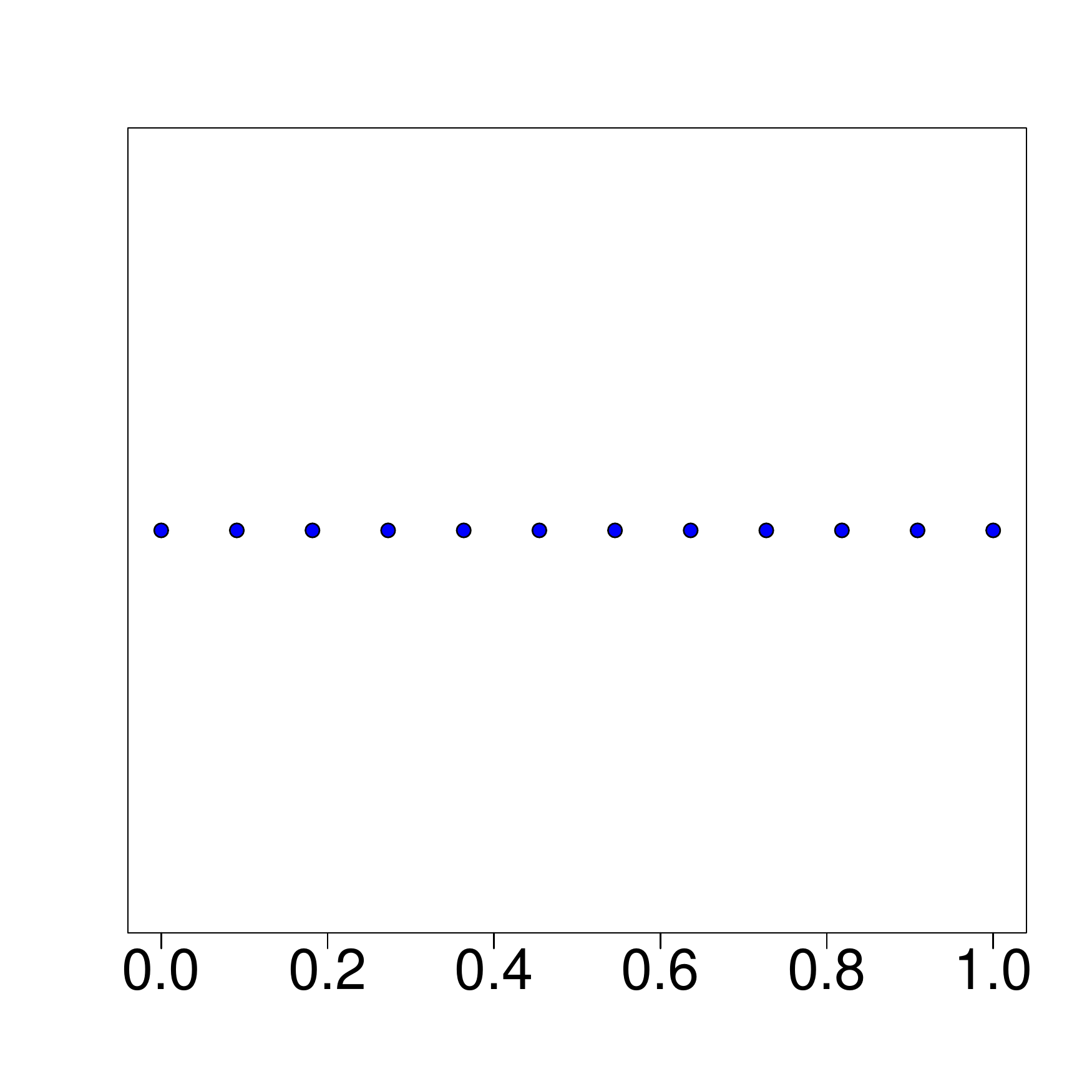}
\includegraphics[height=4.5cm,width=5cm]{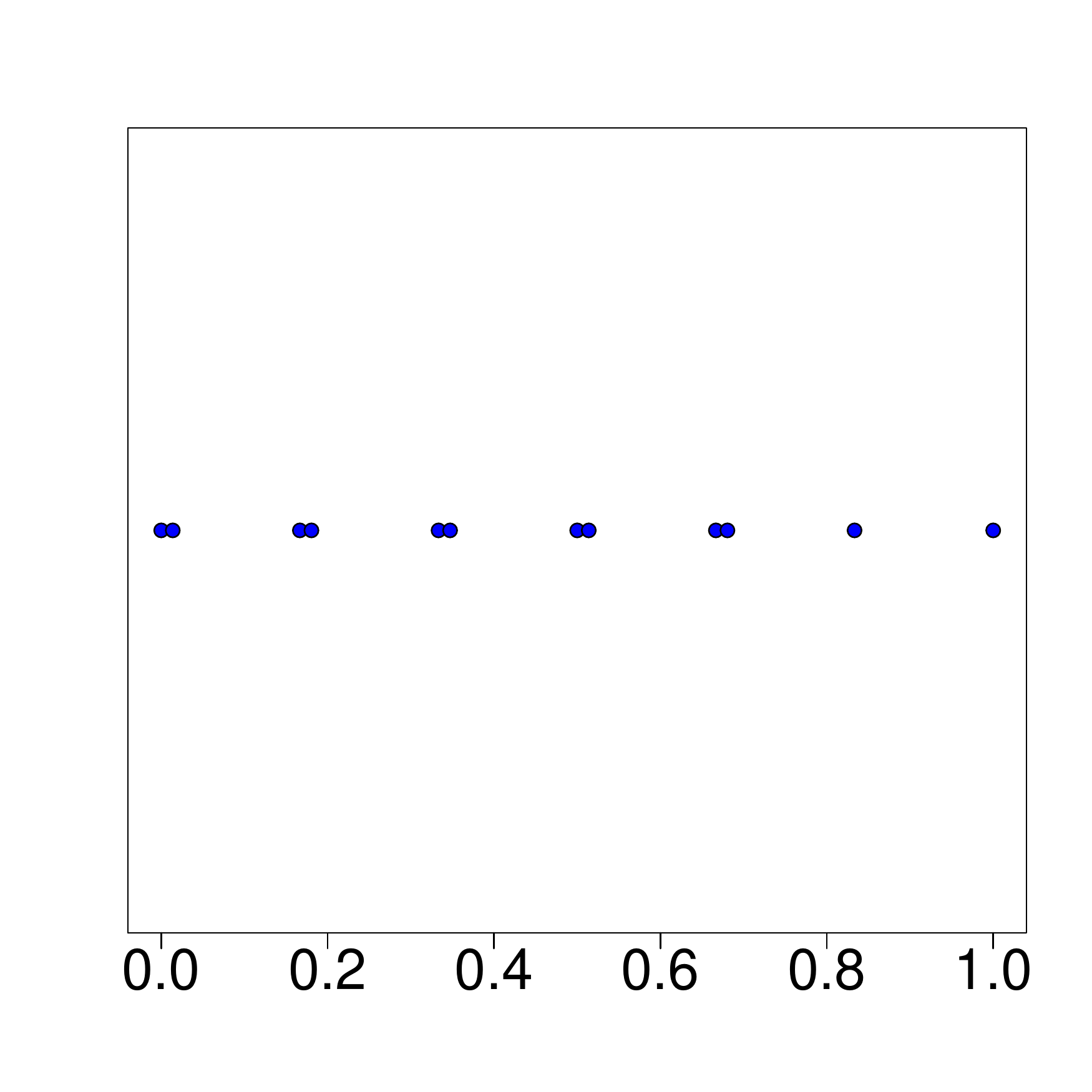} 
\end{tabular}
\caption{Plot of the points $\{s_1,...,s_{n}\}$ for the `minimal', `regular' and `maximal' designs of the numerical experiments for $n=12$ (from left to right). For the `minimal' design, nine points form a dense cluster around one and the asymptotic variance of the CV estimator is $2$ (Proposition \ref{prop2} (i)), for the `regular' design, the asymptotic variance is $3$, and for the `maximal' design, the asymptotic variance is $4$ (Proposition \ref{prop2} (ii)).}
\label{fig:designs}
\end{figure}

For a given configuration of $n$ and a given design $\{s_1,...,s_n \}$, we repeat $N = 2. 000$ data generations and estimations. That is, we independently sample $N$ Gaussian vectors of size $n$ with zero mean vector and covariance matrix $[\sigma_0^2 e^{- \theta_0 |s_i - s_j |}]_{1 \leq i,j \leq n}$. For each of these Gaussian vectors, we compute the CV estimators $\hat{\theta}$ and $\hat{\sigma}^2$, with parameter space $[0.1,10] \times [0.3 , 30]$, so that we consider case \eqref{eq:AN} of Theorem \ref{th:asymptotic}. 
The computation of $\hat{\theta}$ and $\hat{\sigma}^2$ is not challenging since, from Lemma \ref{lem:Sn}, the logarithmic score $S_n(\theta,\sigma^2)$ can be computed quickly, with a $O(n)$ complexity. [For more general covariance functions, the computation of CV or ML criteria is more costly, with a $O(n^3)$ complexity.] The criterion $S_n$ is minimized over $(\theta,\sigma^2)$ by repeating the Broyden–Fletcher–Goldfarb–Shanno (BFGS) algorithm, with several starting points for $(\theta,\sigma^2)$, and by keeping the value of $(\theta,\sigma^2)$ with smallest logarithmic score, over all the repetitions. The \url{R} software was used, with the \url{optim} function. [We remark that, for fixed $\theta$, one could find an explicit expression of $\hat{\sigma}^2(\theta) \in \argmin_{\sigma^2 >0} S_n(\theta,\sigma^2)$ (see also \cite{bachoc13cross} for a different CV criterion). Hence, it would be possible to minimize the profile logarithmic score $\min_{\sigma^2 >0} S_n(\theta,\sigma^2)$ over $\theta$ only. As mentioned earlier, this improvement is not needed here, since the criterion $S_n(\theta,\sigma^2)$ can be computed quickly.]

For the $N$ values of $(\hat{\theta},\hat{\sigma}^2)$, we compute the $N$ values of $( \sqrt{n}/ [\theta_0 \sigma_0^2] ) (\hat{\theta} \hat{\sigma}^2 - \theta_0 \sigma_0^2) $. In Figure \ref{fig:simulation}, we report the histograms of these latter $N$ values, for the seven configurations under consideration. In addition, we report the probability density functions of the seven corresponding Gaussian distributions with mean $0$ and variance $\tau_n^2$, to which the histograms converge when $n \to \infty$, in view of Theorem \ref{th:asymptotic}.

In Figure \ref{fig:simulation}, we observe that, for $n=12$, the asymptotic Gaussian distributions are already reasonable approximations of the empirical histograms. For $n=50$, the asymptotic distributions become very close to the histograms, and for $n=200$ the asymptotic distributions are almost identical to the histograms. Hence, the convergence in distribution of Theorem \ref{th:asymptotic} provides a good approximation of the finite sample situation already for small to moderate $n$. The case $n=12$ illustrates the benefit of the `minimal' design for estimation, as the histogram is most concentrated around zero for this design. Similarly, the value of $\tau_{12}^2$ is the smallest for this design, compared to the `regular' and `maximal' designs. For $n=50$ and $200$, we also observe that the estimation is more accurate for the `regular' design than for the `maximal' design, which also confirms Remark \ref{rem:designs} and Proposition \ref{prop2}.

Finally, we have obtained similar conclusions for the case where either $\theta_0$ or $\sigma_0^2$ is known in the computation of $\hat{\theta}, \hat{\sigma}^2$ (cases of \eqref{eq:AN_1} and \eqref{eq:AN_2}). We do not report the corresponding results for concision.

\begin{figure}[h!]
\centering
\begin{tabular}{c}
\includegraphics[height=4.5cm,width=5cm]{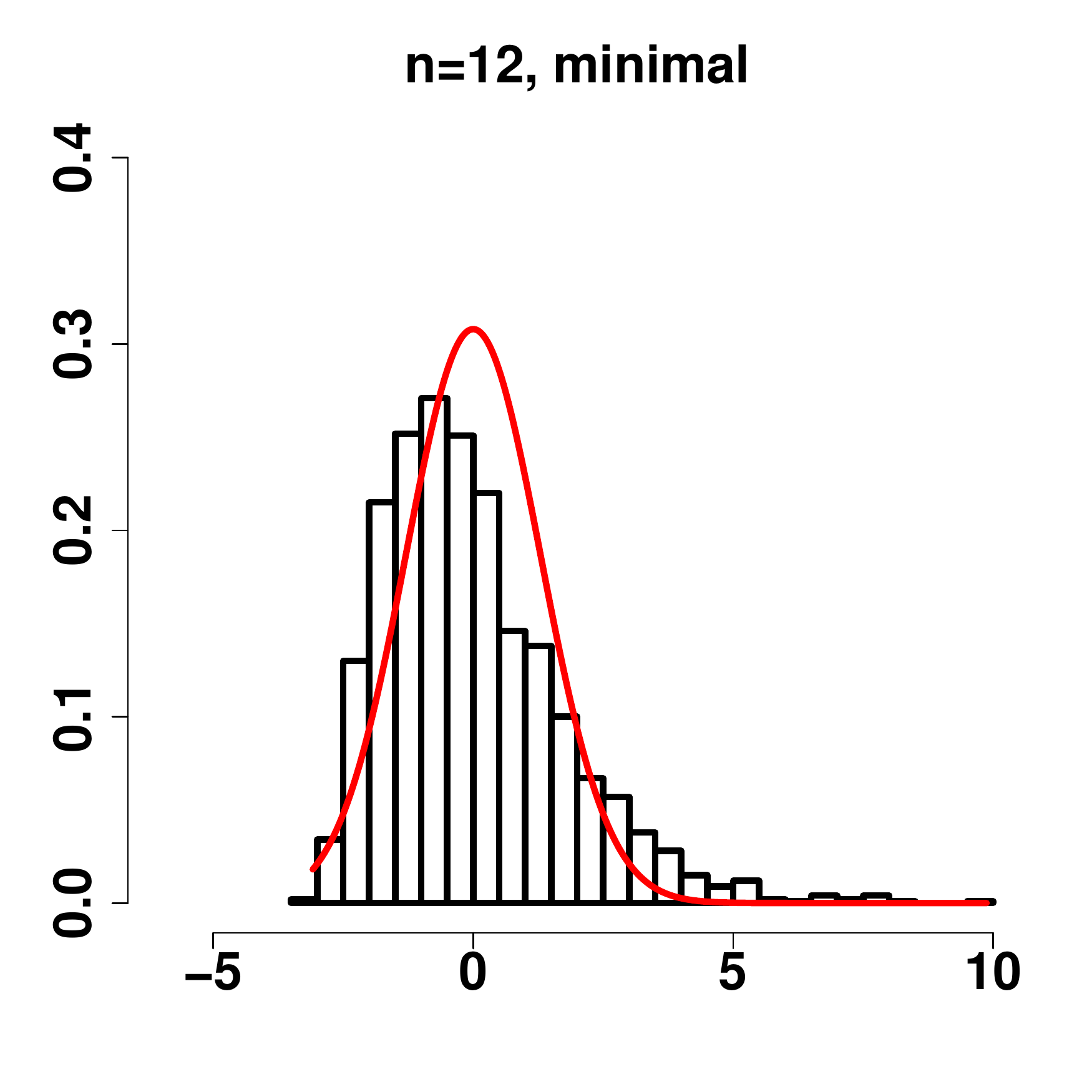}
\includegraphics[height=4.5cm,width=5cm]{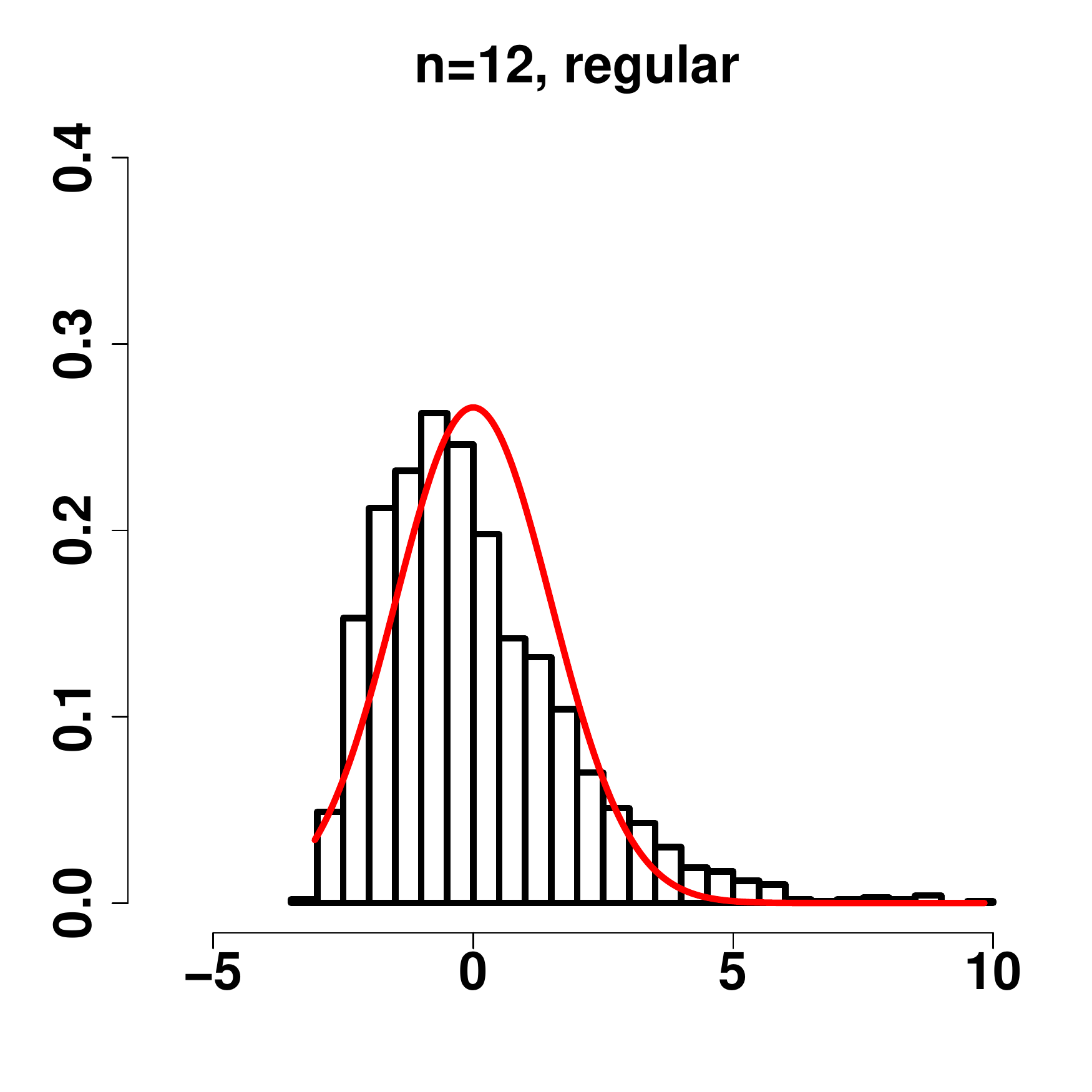}
\includegraphics[height=4.5cm,width=5cm]{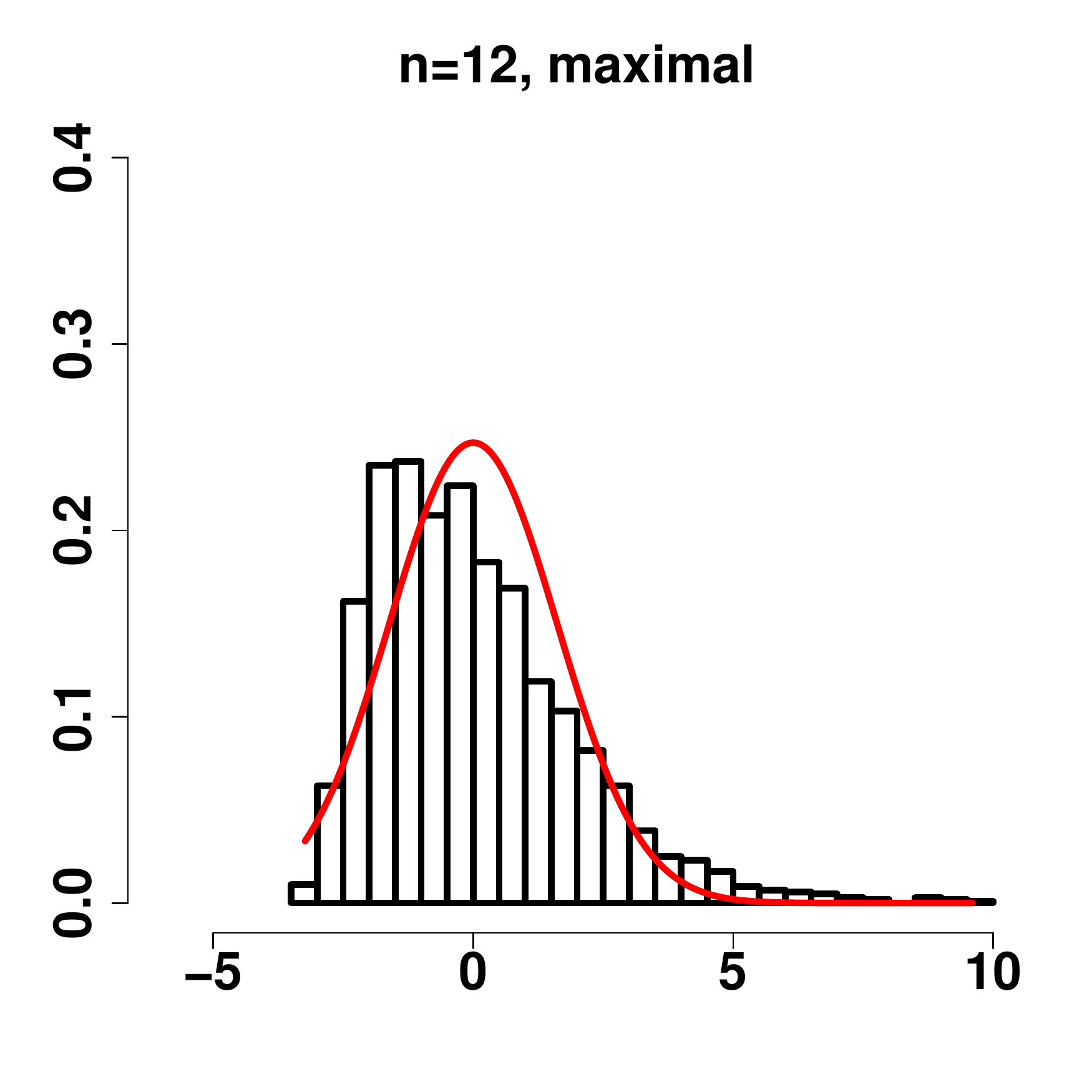} \\
\includegraphics[height=4.5cm,width=5cm]{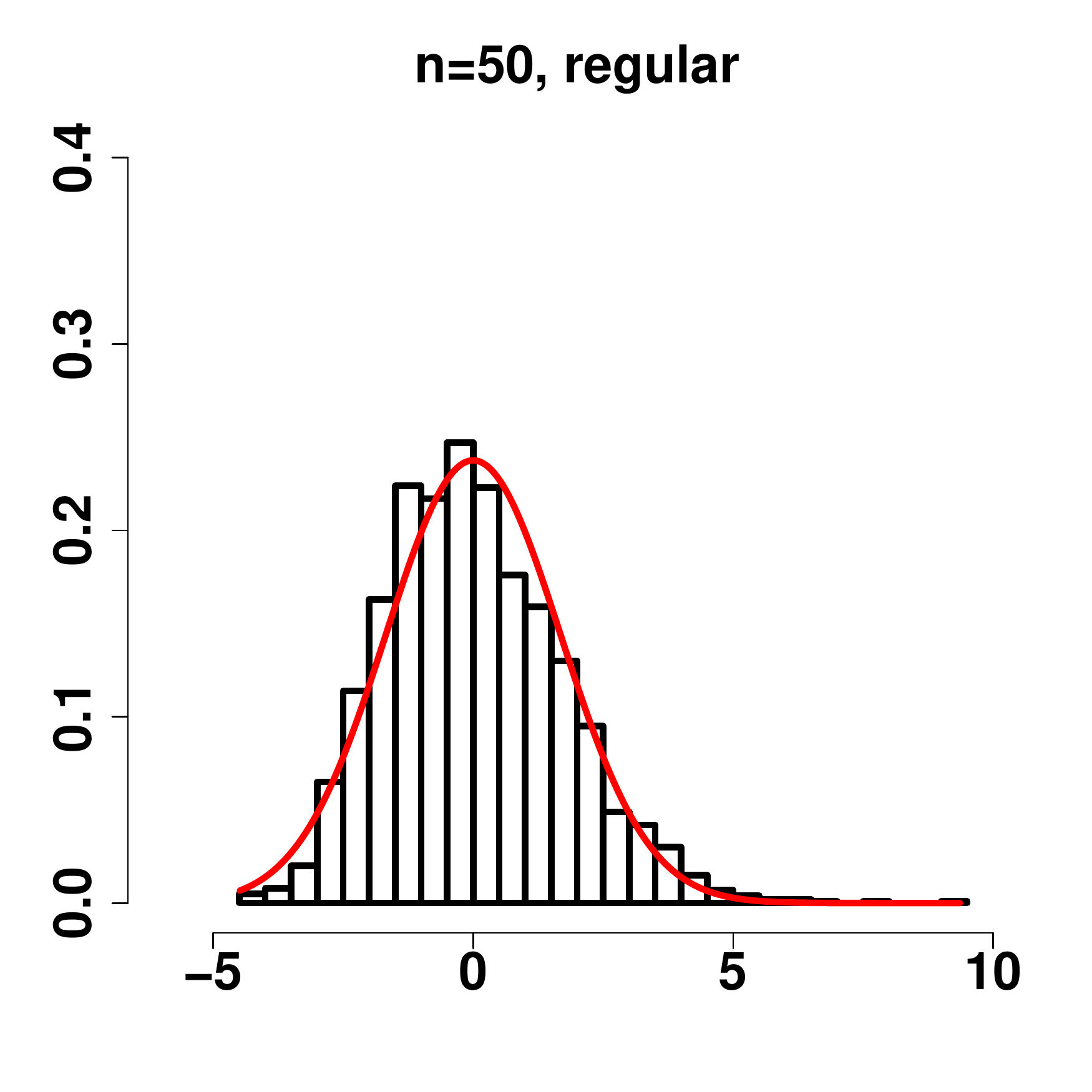}
\includegraphics[height=4.5cm,width=5cm]{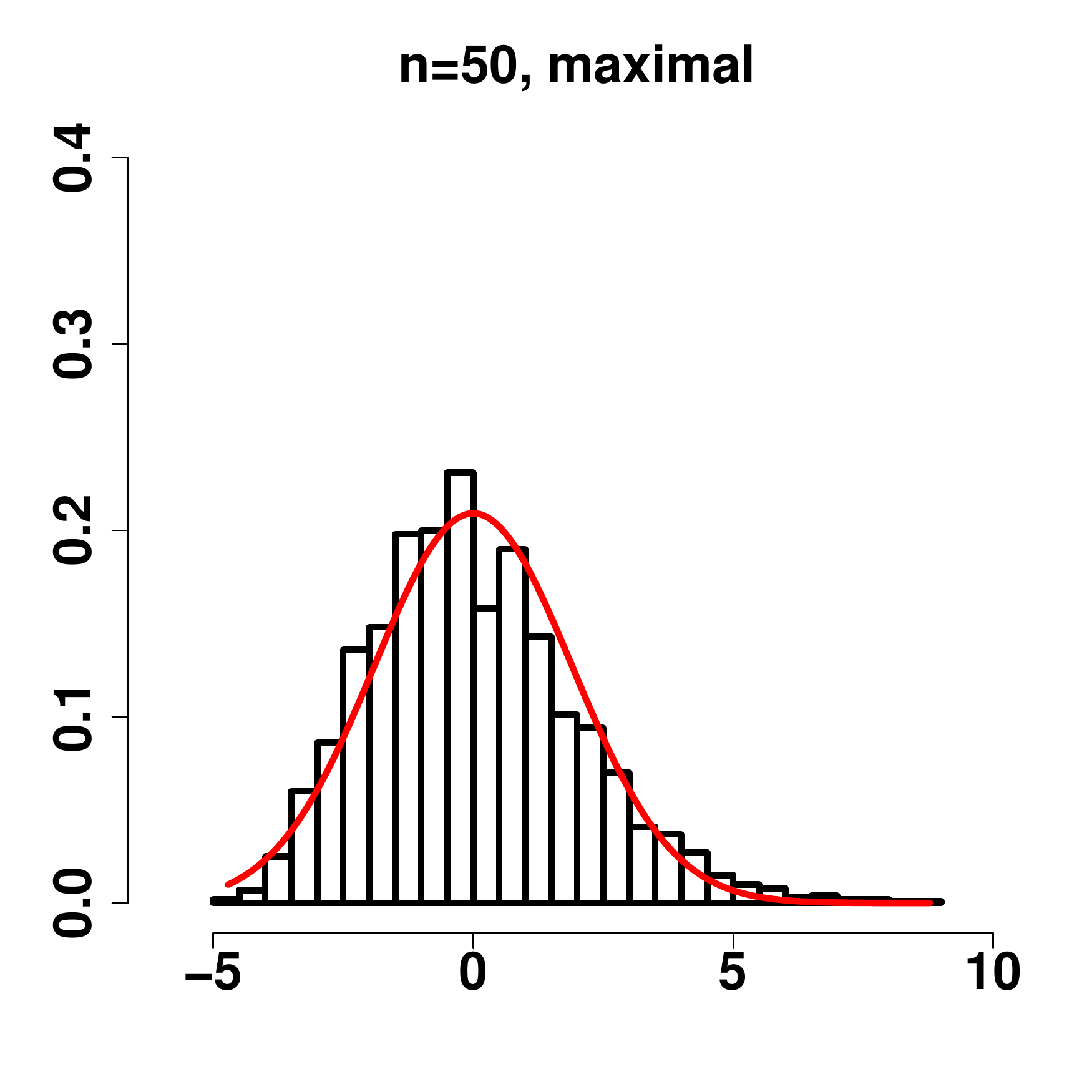} \\
\includegraphics[height=4.5cm,width=5cm]{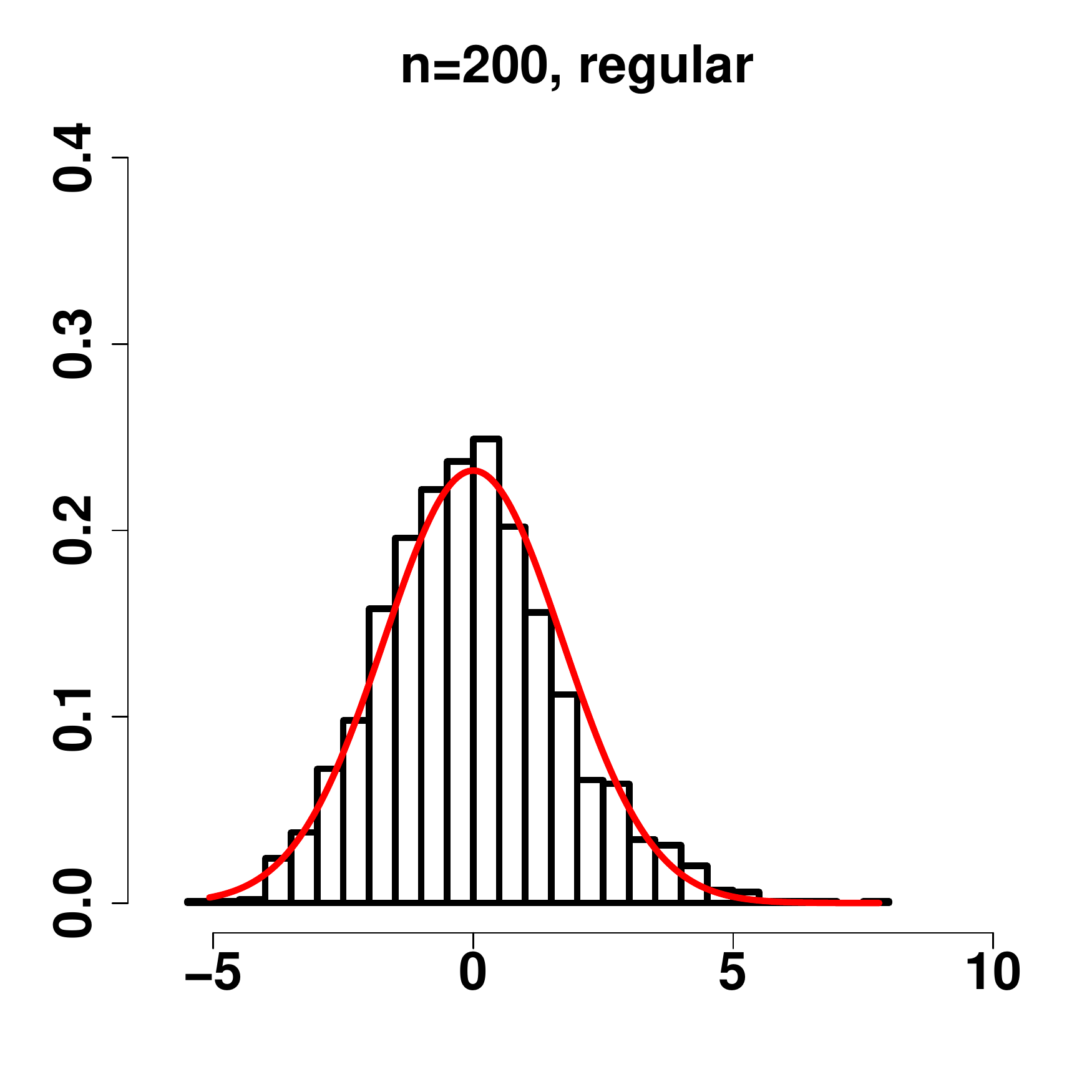} 
\includegraphics[height=4.5cm,width=5cm]{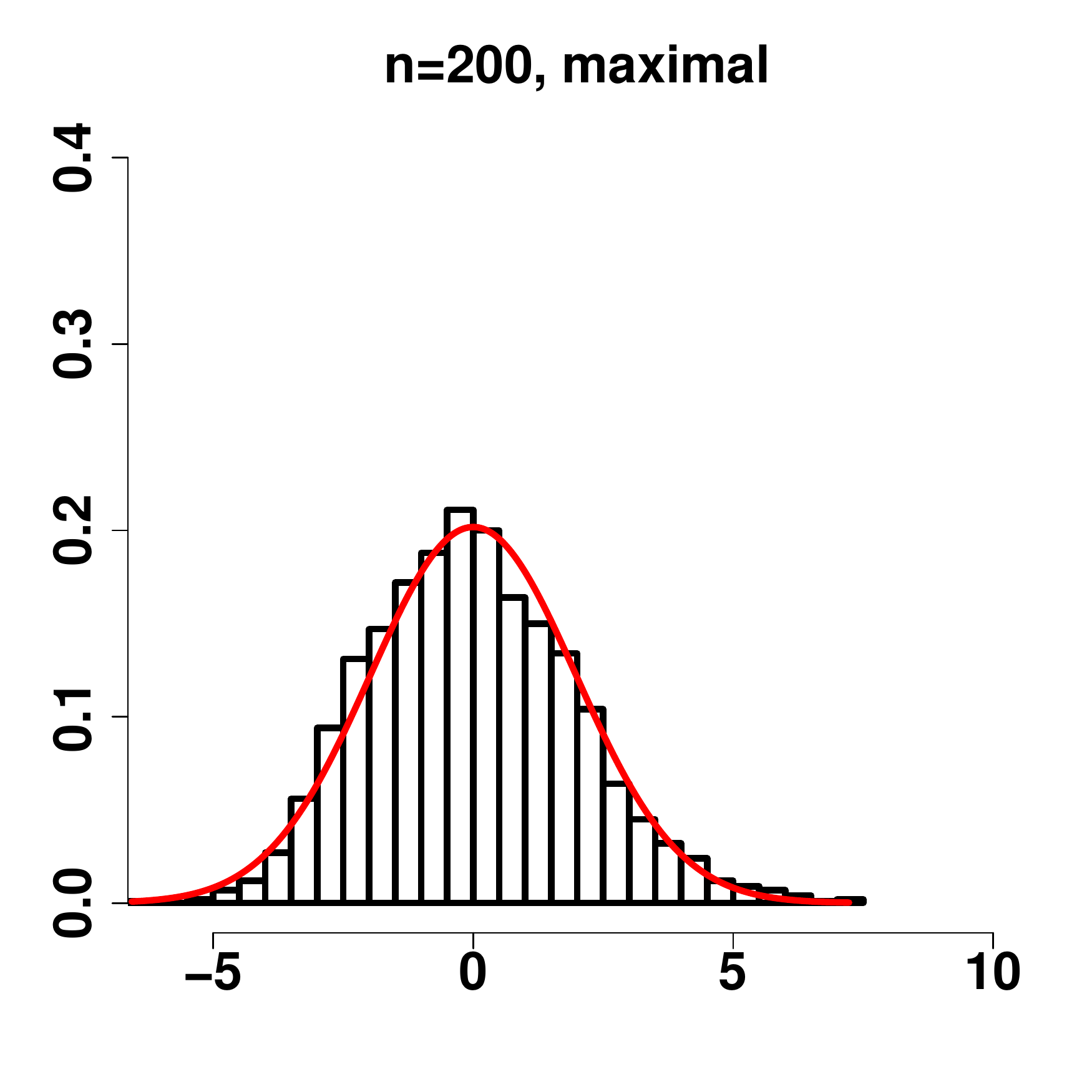} \\
\end{tabular}
\caption{Illustration of Theorem \ref{th:asymptotic}. Histograms of $N=2.000$ independent realizations of $( \sqrt{n}/ [\theta_0 \sigma_0^2]) (\hat{\theta} \hat{\sigma}^2 - \theta_0 \sigma_0^2) $, together with the corresponding asymptotic Gaussian probability density functions with mean $0$ and variances $\tau_n^2$ (red lines). The sample size is $n=12$ (top row), $n=50$ (middle row) and $n=200$ (bottom row). For the top row, the designs are the `minimal' design (left), achieving the smallest asymptotic variance; the `regular' design (middle), with equispaced observation points; and the `maximal' design (right), achieving the largest asymptotic variance. For the middle and bottom rows, the designs are the `regular' design (left) and the `maximal' design (right).}
\label{fig:simulation}
\end{figure}

\section{Extension to regression models} \label{sec:reg}

In this section, we extend Theorems \ref{th:consistency} and \ref{th:asymptotic} to the case of regression models. We assume that, instead of $Y$, we observe the Gaussian process $Z$ defined by $Z(t) = \beta_{01} f_1(t) + ... + \beta_{0p} f_p(t) + Y(t)$. In the definition of $Z$, $\bbeta_0 = (\beta_{01},...,\beta_{0p})' \in \mathbb{R}^p$ is fixed and unknown and, for $k=1,...,p$, $f_k:[0,1] \to \mathbb{R}$ is a known function. Hence, we estimate jointly $(\theta,\sigma^2,\bbeta_0')$ from the observation vector $\z = (z_1,...,z_n)'$, with $z_i = Z(s_i)$. 

Let $\F$ be the $n \times p$ matrix $[f_j(s_i)]_{i=1,...,n,j=1,...,p}$. Then $\hat{\bbeta} = ( \F' \R_{\theta}^{-1} \F)^{-1} \F' \R_{\theta}^{-1} \z$ is the best linear unbiased predictor of $\bbeta_0$ given $\z$, under covariance function $K_{\theta,\sigma^2}$ for all $\sigma^2$, see e.g. \cite{santner03design}.

We now address CV estimation.
Let $\f_i = (f_1(s_i),...,f_p(s_i))'$, let $\z_{-i} = (z_1,...,z_{i-1},z_{i+1},...,z_n)'$, let $\F_{-i}$ be the matrix obtained by removing line $i$ of $\F$, and let $\R_{\theta,-i}$ be the $(n-1) \times (n-1)$ matrix obtained by removing line and column $i$ of $\R_{\theta}$.
Then, for all $\sigma^2$, $\hat{\bbeta}_{-i} = ( \F_{-i}' \R_{\theta,-i}^{-1} \F_{-i})^{-1} \F_{-i}' \R_{\theta,-i}^{-1} \z_{-i}$ is the best linear unbiased predictor of $\bbeta_0$ given $\z_{-i}$, under covariance function $K_{\theta,\sigma^2}$.

We also let $\r_{\theta,-i} = ( K_{\theta,1}(s_i,s_1),..., K_{\theta,1}(s_i,s_{i-1}),K_{\theta,1}(s_i,s_{i+1}),..., K_{\theta,1}(s_i,s_{n}))'$.
Then, from e.g. \cite{santner03design}, 
\begin{equation} \label{eq:loo:pred:with:mean}
\hat{Z}_{\theta,-i}(s_i) = \f_i' \hat{\bbeta}_{-i}  + \r_{\theta,-i}' \R_{\theta,-i}^{-1} ( \z_{-i} - \F_{-i} \hat{\bbeta}_{-i} )
\end{equation}
is the best linear unbiased predictor of $z_i$ given $\z_{-i}$. We let 
\[
\check{\sigma}^2_{\theta,\sigma^2,-i}(s_i) = \mathbb{E}_{\theta,\sigma^2} \left( \left[ 
\hat{Z}_{\theta,-i}(s_i) - Z(s_i)
\right]^2 \right).
\]
Then, the CV estimator of $(\theta,\sigma^2)$ we shall study in this section is
\[
( \check{\theta},\check{\sigma}^2 )
\in \argmin_{a \leq \theta \leq A, b \leq \sigma^2 \leq B} \bar{S}_n(\theta,\sigma^2)
\]
with
\[
\bar{S}_n(\theta,\sigma^2)
=
\sum_{i=1}^n
\left[
 \log( \check{\sigma}^2_{ \theta,\sigma^2,-i}(s_i) ) 
 + \frac{ (z_i -\hat{Z}_{\theta,-i}(s_i) )^2 }{ \check{\sigma}^2_{ \theta,\sigma^2,-i}(s_i) }
 \right].
\]
We remark that \cite{zhang10kriging} suggests to use a similar CV criterion, with the notable difference that $\hat{\bbeta}_{-i}$ is replaced by $\hat{\bbeta}$ in \eqref{eq:loo:pred:with:mean}. The benefit of the CV predictor \eqref{eq:loo:pred:with:mean}, compared to that considered in \cite{zhang10kriging}, is that, in \eqref{eq:loo:pred:with:mean}, no use of $z_i$ is made at all for predicting $z_i$. In \cite{dubrule83cross}, the following relations are shown, extending those of Section \ref{sec:consistency}. We have
\[
\hat{Z}_{\theta,-i}(s_i)
 =
  - \sum_{\substack{j=1,\ldots,n; \\ j \neq i}} 
  \frac{
  (\Q_{\theta}^{-})_{ij}}{(\Q_{\theta}^{-})_{ii}
  }
  z_j
\]
and
\begin{equation} \label{eq:hat:sigma:pred:with:mean}
\check{\sigma}^2_{ \theta,\sigma^2,-i}(s_i)
=
\frac{\sigma^2}{(\Q_{\theta}^{-})_{ii}},
\end{equation}
with $\Q_{\theta}^{-} = \R_{\theta}^{-1} - \R_{\theta}^{-1}\F ( \F' \R_{\theta}^{-1} \F)^{-1} \F' \R_{\theta}^{-1}$.

Based on the two displays above, and again using the explicit matrix inverse in \eqref{eq:tridiagonal:inverse}, we are able to prove the consistency and asymptotic normality of $\check{\theta}\check{\sigma}^2$ where the asymptotic distribution is identical to that of Section \ref{sec:AN}.

\begin{theorem}[Consistency] \label{theorem:consistency:with:mean}
Assume that $\underset{n\to +\infty}\limsup \underset{i=2,\ldots,n}\max \Delta_i=0$ and that there exists $(\tilde{\theta},\tilde{\sigma}^2)$ in $J$ so that $\tilde{\theta} \tilde{\sigma}^2 = \theta_0 \sigma_0^2$. Assume also that $f_1,...,f_p$ are twice continuously differentiable and are linearly independent on $[0,1]$.
Then $(\check{\theta},\check{\sigma}^2)$ exists and 
\begin{align}\label{eq:cv_produit}
\check{\theta}\check{\sigma}^2 \overset{a.s.}{\to} \theta_0\sigma_0^2.
\end{align}
\end{theorem}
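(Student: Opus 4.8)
The plan is to show that the regression term perturbs the logarithmic score only by a lower-order amount, so that the consistency already proved in Theorem \ref{th:consistency} for the no-mean score transfers essentially verbatim. Write $\y = (Y(s_1),\dots,Y(s_n))'$ for the latent Ornstein–Uhlenbeck vector, so that $\z = \F\bbeta_0 + \y$. Using \eqref{eq:hat:sigma:pred:with:mean} together with the expression of $\hat Z_{\theta,-i}(s_i)$, the criterion rewrites in matrix form as
\[
\bar S_n(\theta,\sigma^2) = n\log(\sigma^2) - \sum_{i=1}^n \log\big((\Q_\theta^-)_{ii}\big) + \frac{1}{\sigma^2}\sum_{i=1}^n \frac{\big((\Q_\theta^-\z)_i\big)^2}{(\Q_\theta^-)_{ii}}.
\]
The crucial algebraic observation is that $\Q_\theta^-\F = \mathbf 0$ by construction, hence $\Q_\theta^-\z = \Q_\theta^-\y$ and the unknown mean disappears from the quadratic term entirely. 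Since the matrix form of the no-mean score $S_n$ in \eqref{eq:defSn} is the same expression with $\R_\theta^{-1}$ in place of $\Q_\theta^-$ (and with $\y$), and since $S_n$ is precisely the score treated in Theorem \ref{th:consistency} applied to the genuine Ornstein–Uhlenbeck process $\y$, it suffices to establish
\[
\sup_{(\theta,\sigma^2)\in J} \frac{1}{n}\big| \bar S_n(\theta,\sigma^2) - S_n(\theta,\sigma^2) \big| \xrightarrow[n\to\infty]{} 0 \quad \text{almost surely}
\]
and then reuse the proof of Theorem \ref{th:consistency}.

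The whole comparison rests on controlling the rank-$p$ correction $\boldsymbol{D}_\theta := \R_\theta^{-1}\F(\F'\R_\theta^{-1}\F)^{-1}\F'\R_\theta^{-1}$, for which $\Q_\theta^- = \R_\theta^{-1} - \boldsymbol{D}_\theta$. First I would show, for each column $\f^{(l)} = (f_l(s_1),\dots,f_l(s_n))'$ and using the explicit tridiagonal inverse \eqref{eq:tridiagonal:inverse}, that $(\R_\theta^{-1}\f^{(l)})_i = O(\Delta_i+\Delta_{i+1})$ for interior indices and $O(1)$ at the two boundary indices, uniformly in $\theta\in[a,A]$. This is where the twice-continuous-differentiability of $f_l$ is used: a second-order Taylor expansion of $f_l$ at $s_i$ shows that the $O(\Delta_i^{-1})$ contributions coming from the diagonal of $\R_\theta^{-1}$ cancel, leaving a term of order $\Delta_i+\Delta_{i+1}$ proportional to $\theta f_l(s_i) - \theta^{-1}f_l''(s_i)$. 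Summing, $\F'\R_\theta^{-1}\F$ converges, uniformly in $\theta\in[a,A]$, to the Gram matrix of $f_1,\dots,f_p$ in the reproducing kernel Hilbert space of the Ornstein–Uhlenbeck covariance; this limit is positive definite because $f_1,\dots,f_p$ are linearly independent, so $\F'\R_\theta^{-1}\F$ is invertible with uniformly bounded inverse for $n$ large. Consequently $(\boldsymbol{D}_\theta)_{ij} = O\big((\Delta_i+\Delta_{i+1})(\Delta_j+\Delta_{j+1})\big)$ for interior $i,j$, and, the sample path of $Y$ being almost surely bounded, $(\boldsymbol{D}_\theta\y)_i = O(\Delta_i+\Delta_{i+1})$ almost surely for interior $i$, the boundary entries being $O(1)$.

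It then remains to feed these bounds into the two pieces of $\bar S_n - S_n$. For the log term, the relative perturbation $(\boldsymbol{D}_\theta)_{ii}/(\R_\theta^{-1})_{ii}$ is $O\big((\Delta_i+\Delta_{i+1})\Delta_i\Delta_{i+1}\big)$ in the interior, so $\sum_i\big[\log((\R_\theta^{-1})_{ii}) - \log((\Q_\theta^-)_{ii})\big]$ is $o(1)$, the two boundary terms being $O(1)$. For the quadratic term I would expand
\[
\frac{(\R_\theta^{-1}\y)_i^2}{(\R_\theta^{-1})_{ii}} - \frac{(\Q_\theta^-\y)_i^2}{(\Q_\theta^-)_{ii}}
\]
to first order in the corrections $(\boldsymbol{D}_\theta\y)_i$ and $(\boldsymbol{D}_\theta)_{ii}$, and bound each interior contribution using $(\R_\theta^{-1}\y)_i = O(\Delta_i^{-1/2})$ and $(\R_\theta^{-1})_{ii} = O(\Delta_i^{-1})$ almost surely (from the regularity of the paths of $Y$); each interior contribution is then $O(\Delta_i^{3/2})$, which is summable to $o(1)$ because $\max_i\Delta_i\to 0$, while the finitely many boundary contributions are again $O(1)$. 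All estimates being uniform in $(\theta,\sigma^2)\in J$, this yields the uniform almost-sure convergence displayed above and, via the proof of Theorem \ref{th:consistency}, the existence of $(\check\theta,\check\sigma^2)$ and the convergence $\check\theta\check\sigma^2\to\theta_0\sigma_0^2$.

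The main obstacle is the uniform-in-$\theta$, almost-sure control of the rank-$p$ correction $\boldsymbol{D}_\theta$. Two points require care: the cancellation that upgrades the naive $O(1)$ bound on $(\R_\theta^{-1}\f^{(l)})_i$ to $O(\Delta_i+\Delta_{i+1})$ in the interior, which genuinely uses $f_l\in C^2$ and must be made uniform over the compact range of $\theta$; and the positive definiteness and uniform invertibility of the limiting Ornstein–Uhlenbeck Gram matrix, on which the boundedness of $(\F'\R_\theta^{-1}\F)^{-1}$ hinges. The boundary indices $i\in\{1,n\}$, where $\boldsymbol{D}_\theta$ does not gain the extra powers of $\Delta_i$, have to be isolated and treated separately; but since there are only finitely many of them their total contribution is $O(1)=o(n)$ and does not affect the conclusion.
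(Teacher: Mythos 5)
Your proposal is correct, and at the top level it matches the paper's strategy: reduce to the zero-mean Theorem \ref{th:consistency} by showing that $\bar{S}_n-S_n$ is uniformly negligible on $J$, then conclude via \eqref{eq:to:finish:consistency}. Your two workhorse estimates are also exactly the paper's: the $C^2$-driven cancellation giving $\e_{i,n}'\R_{\theta}^{-1}\f^{(k)}=O(\Delta_i+\Delta_{i+1})$ at interior indices (Lemma \ref{lem:eint:Rmun:un}), and the uniform convergence of $\F'\R_{\theta}^{-1}\F$ to an invertible limit $\I_f$ (Lemma \ref{lem:unt:Rmun:un}); your identification of $\I_f$ as the Ornstein--Uhlenbeck RKHS Gram matrix is precisely what the paper's formula for $\I_f$ expresses. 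Where you genuinely differ is the decomposition. The paper (Lemmas \ref{lem:expression:barSn} and \ref{lem:control:rest}) writes $z_i-\hat{Z}_{\theta,-i}(s_i)=y_i-\hat{Y}_{\theta,-i}(s_i)+\epsilon_{-i}$, with $\epsilon_{-i}$ a leave-one-out regression residual built from $\hat{\bbeta}_{-i}$, and must then control four remainders $r_1,\dots,r_4$; this forces it to invoke leave-one-out, uniform-in-$i$ versions of Lemmas \ref{lem:unt:Rmun:y} and \ref{lem:unt:Rmun:un} (needed to bound $\hat{\bbeta}_{-i}-\bbeta_0$), which the paper asserts without detailed proof. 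Your identity $\Q_{\theta}^{-}\F=\boldsymbol{0}$, hence $\Q_{\theta}^{-}\z=\Q_{\theta}^{-}\y$, removes $\bbeta_0$ and every leave-one-out regression quantity in one stroke, and reduces the whole comparison to the full-sample rank-$p$ correction $\boldsymbol{D}_\theta=\R_{\theta}^{-1}-\Q_{\theta}^{-}$. This is cleaner: it dispenses with Lemma \ref{lem:unt:Rmun:y} and with the uniform-in-$i$ leave-one-out modifications altogether, at the price of having to expand the ratio $((\Q_{\theta}^{-}\y)_i)^2/(\Q_{\theta}^{-})_{ii}$ yourself, where the paper's $\epsilon_{-i}$-based bookkeeping does the analogous expansion.

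One quantitative step of yours is imprecise, though harmlessly and fixably so: the uniform-in-$i$ almost sure bound $(\R_{\theta}^{-1}\y)_i=O(\Delta_i^{-1/2})$. The paths of $Y$ are H\"older continuous of every order $\gamma<1/2$ but not of order $1/2$, and the $i$-th entry involves the increments over both $[s_{i-1},s_i]$ and $[s_i,s_{i+1}]$, so the honest uniform statement is $(\R_{\theta}^{-1}\y)_i=O\big((\Delta_i\wedge\Delta_{i+1})^{-1/2-\epsilon}\big)$ for any $\epsilon>0$ (equivalently, a logarithmic factor appears if one argues through the normalized increments $\overline{W}_{i,0}$ and Lemma \ref{lem:lem3}). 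This degrades your interior exponent from $3/2$ to $3/2-\epsilon$, still summable to $o(1)$, so your conclusion is unaffected. You can also avoid path regularity entirely: writing $a_i=(\R_{\theta}^{-1})_{ii}$ and $b_i=(\boldsymbol{D}_\theta)_{ii}$, the cross term in your expansion equals $\frac{2a_i}{a_i-b_i}\,\big(y_i-\hat{Y}_{\theta,-i}(s_i)\big)\,(\boldsymbol{D}_\theta\y)_i$, and the elementary bound $|y_i-\hat{Y}_{\theta,-i}(s_i)|\leq K\sup_{t\in[0,1]}|Y(t)|$ (the paper's Lemma \ref{lem:y:minus:haty}) gives each interior term as $O\big((\Delta_i+\Delta_{i+1})\sup_{t\in[0,1]}Y(t)^2\big)$ directly; this is in essence how the paper treats its term $r_3$.
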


\begin{theorem}[Central Limit Theorem] \label{theorem:AN:with:mean}
Assume that the conditions of Theorem \ref{theorem:consistency:with:mean} hold and that $aB > \theta_0 \sigma_0^2$ and $Ab < \theta_0 \sigma_0^2$. Then, with $\tau_n$ as in \eqref{eq:tau:n}, we have
\begin{equation}
\frac{\sqrt{n}}{\theta_0\sigma_0^2\tau_{n}}( \check \theta \check \sigma^2-\theta_0 \sigma_0^2) \xrightarrow[n\to \infty]{\mathcal D} \mathcal N (0,1).\label{eq:AN:with:mean}
\end{equation}
\end{theorem}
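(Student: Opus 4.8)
The plan is to reduce the mean-corrected score $\bar S_n$ to the no-mean score $S_n$ of Lemma \ref{lem:Sn}, and then to invoke Theorem \ref{th:asymptotic}. The starting point is that $\Q_\theta^{-}\F=\boldsymbol{0}$, since $\Q_\theta^{-}=\R_\theta^{-1}-\R_\theta^{-1}\F(\F'\R_\theta^{-1}\F)^{-1}\F'\R_\theta^{-1}$ annihilates the columns of $\F$. Writing $\z=\F\bbeta_0+\y$ with $\y=(y_1,\dots,y_n)'$, the two displays preceding \eqref{eq:hat:sigma:pred:with:mean} give $z_i-\hat Z_{\theta,-i}(s_i)=(\Q_\theta^{-})_{ii}^{-1}\sum_{j}(\Q_\theta^{-})_{ij}z_j=(\Q_\theta^{-})_{ii}^{-1}\sum_{j}(\Q_\theta^{-})_{ij}y_j$, so the unknown mean cancels exactly. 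Consequently
\[
\bar S_n(\theta,\sigma^2)=n\log(\sigma^2)-\sum_{i=1}^n\log\big((\Q_\theta^{-})_{ii}\big)+\frac{1}{\sigma^2}\sum_{i=1}^n\frac{1}{(\Q_\theta^{-})_{ii}}\Big(\sum_{j}(\Q_\theta^{-})_{ij}y_j\Big)^2,
\]
which has exactly the same form as the expression for $S_n(\theta,\sigma^2)$ obtained from \eqref{eq:loo:y:pred} underlying Lemma \ref{lem:Sn}, with $\R_\theta^{-1}$ replaced by $\Q_\theta^{-}$. The whole argument therefore reduces to showing that this replacement perturbs the score, and the derivatives used in the proof of Theorem \ref{th:asymptotic}, by amounts that are negligible after the scaling $\sqrt n/(\theta_0\sigma_0^2\tau_n)$.

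The key estimate is on the correction matrix $C_\theta\defeq\R_\theta^{-1}\F(\F'\R_\theta^{-1}\F)^{-1}\F'\R_\theta^{-1}$, so that $\Q_\theta^{-}=\R_\theta^{-1}-C_\theta$. Using the explicit tridiagonal form \eqref{eq:tridiagonal:inverse} together with a second-order Taylor expansion of each $f_k$ (this is where twice differentiability enters), I would show that for every interior index $i$ the $i$-th row of $\R_\theta^{-1}\F$ is small, with
\[
(\R_\theta^{-1}\F)_{ik}=\frac{\Delta_i+\Delta_{i+1}}{4\theta}\big(\theta^2 f_k(s_i)-f_k''(s_i)\big)+O\big((\Delta_i+\Delta_{i+1})^2\big),
\]
so that these entries are $O(\max_i\Delta_i)$, while only the two boundary rows $i=1,n$ remain $O(1)$. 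Summing, and using that $f_1,\dots,f_p$ are linearly independent, $\F'\R_\theta^{-1}\F$ converges to a finite positive definite matrix, whence $(\F'\R_\theta^{-1}\F)^{-1}=O(1)$. Combining the two facts, $(C_\theta)_{ij}=O(\Delta_i\Delta_j)$ for interior $i,j$; in particular $(C_\theta)_{ii}=O(\Delta_i^2)=o\big((\R_\theta^{-1})_{ii}\big)$, so $(\Q_\theta^{-})_{ii}=(\R_\theta^{-1})_{ii}\big(1+O(\Delta_i^3)\big)$, $\log((\Q_\theta^{-})_{ii})=\log((\R_\theta^{-1})_{ii})+O(\Delta_i^3)$, and the perturbation $\sum_j(C_\theta)_{ij}y_j$ of the $i$-th residual is $O(\Delta_i)\sum_j\Delta_j|y_j|$, hence uniformly small.

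With these bounds in hand, I would follow the proof architecture of Theorem \ref{th:asymptotic}. By Theorem \ref{theorem:consistency:with:mean}, $\check\theta\check\sigma^2\to\theta_0\sigma_0^2$ almost surely, and the assumption $aB>\theta_0\sigma_0^2$, $Ab<\theta_0\sigma_0^2$ guarantees (as in Remark \ref{rem:assumptions}) that a relevant partial derivative of $\bar S_n$ vanishes at $(\check\theta,\check\sigma^2)$ for $n$ large almost surely. Taylor-expanding this first-order condition around the true parameter and isolating the microergodic direction $\theta\sigma^2$, the limiting distribution of $\sqrt n(\check\theta\check\sigma^2-\theta_0\sigma_0^2)$ is governed by the gradient and Hessian of $\bar S_n$. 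Replacing $\bar S_n$ by $S_n$ via the estimates above, the leading stochastic term is precisely the one analysed in Theorem \ref{th:asymptotic}, so the same central limit theorem holds with the same $\tau_n^2$ given by \eqref{eq:tau:n}, which yields \eqref{eq:AN:with:mean}.

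The main obstacle is not the score itself but its derivatives: the CLT proof differentiates $\bar S_n$ once and twice in $(\theta,\sigma^2)$, and one must check that the derivatives of $C_\theta$ (which bring in $\partial_\theta\R_\theta^{-1}$ and $\partial_\theta(\F'\R_\theta^{-1}\F)^{-1}$) remain $O(\Delta_i\Delta_j)$ entrywise, so that $n^{-1/2}\,\partial_\theta(\bar S_n-S_n)$ and $n^{-1}\,\partial^2_\theta(\bar S_n-S_n)$ converge to zero in probability at the true value. This requires uniform control of the Taylor remainders over $\theta\in[a,A]$ and careful bookkeeping of the finitely many $O(1)$ boundary rows, whose contribution vanishes after division by $\sqrt n$. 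Establishing these derivative bounds, rather than the structural reduction, is the technical heart of the argument.
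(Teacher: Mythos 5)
Your structural reduction is essentially the paper's own: the identity $\Q_{\theta}^{-}\F=0$, the row bounds $(\R_{\theta}^{-1}\F)_{ik}=O(\Delta_i+\Delta_{i+1})$ for interior $i$ (Lemma \ref{lem:eint:Rmun:un}), and the convergence of $\F'\R_{\theta}^{-1}\F$ to an invertible matrix (Lemma \ref{lem:unt:Rmun:un}) are exactly the ingredients of Lemmas \ref{lem:expression:barSn} and \ref{lem:control:rest}, which yield $\sup_{(\theta,\sigma^2)\in J}|\bar S_n-S_n|=O(1)$ a.s. Up to that point your plan is sound and matches the paper.

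The gap is in the endgame. You propose to Taylor-expand a first-order condition around the true parameter, control the gradient and Hessian of $\bar S_n$, and you identify as the ``technical heart'' the bounds on $\partial_\theta(\bar S_n-S_n)$ and $\partial^2_\theta(\bar S_n-S_n)$, i.e.\ on $\theta$-derivatives of the correction matrix. This is both unjustified and unnecessary. Unjustified, because under $aB>\theta_0\sigma_0^2$, $Ab<\theta_0\sigma_0^2$ only the $\sigma^2$-coordinate of $(\check\theta,\check\sigma^2)$ is forced into the interior of its interval for large $n$ ($\check\theta$ may sit on the boundary $a$ or $A$), so only $(\partial/\partial\sigma^2)\bar S_n(\check\theta,\check\sigma^2)=0$ is available — a full gradient/Hessian expansion in $(\theta,\sigma^2)$ has no basis, and would in any case collide with the fact that only $\theta\sigma^2$ is identifiable, so the joint Hessian is asymptotically degenerate along the curve $\theta\sigma^2=\mathrm{const}$. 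Unnecessary, because the correction has the explicit form $\bar S_n-S_n=-r_1(\theta)+\sigma^{-2}\left(r_2(\theta)+2r_3(\theta)-r_4(\theta)\right)$ with all $r_k$ independent of $\sigma^2$; hence $\partial_{\sigma^2}(\bar S_n-S_n)=-\sigma^{-4}(r_2+2r_3-r_4)$ is controlled by the very same $O(1)$ bounds already proved, with no differentiation of $C_\theta$ in $\theta$ whatsoever. The paper exploits precisely this: the exact $\sigma^2$-stationarity equation gives $n\check\sigma^2=Q(\check\theta)+O_\P(1)$ uniformly in $\theta$, where $Q$ is the quadratic form of the zero-mean case, and the CLT then follows by the same (non-Taylor) manipulation as in Theorem \ref{th:asymptotic} — multiply the stationarity equation by the appropriate factor and use the uniform-in-$J$ stochastic approximations, then Neumann's CLT for the weakly dependent array and the subsequence argument. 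This asymmetry is also the reason the theorem is stated only for $aB>\theta_0\sigma_0^2$, $Ab<\theta_0\sigma_0^2$, unlike Theorem \ref{th:asymptotic}: the case where the $\theta$-derivative must vanish would indeed require controlling $\theta$-derivatives of the correction terms, which neither you nor the paper establish.
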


In Theorems \ref{theorem:consistency:with:mean} and \ref{theorem:AN:with:mean}, the twice differentiability condition for $f_1,...,f_p$ is mostly technical, and could be replaced by a continuous differentiability condition, at the price of more technical proofs. [We remark that Theorems \ref{theorem:consistency:with:mean} and \ref{theorem:AN:with:mean} apply in particular to polynomial functions $f_1,...,f_p$ which are widely considered, for instance in the framework of computer experiments \cite{santner03design}.]
As remarked in \cite{Yin1991}, when $f_1,...,f_p$ are continuously differentiable, the parameter $\bbeta_0$ is non-microergodic and can not be consistently estimated.

Finally, assume now that $f_1,...,f_p$ satisfy the conditions given in Theorem 3 (ii) in \cite{Yin1991}. Then, one can show from the proof of this theorem that, for any sequence or random variables $(\bar{\theta},\bar{\sigma}^2) \in J$ (and in particular for ( $\check{\theta},\check{\sigma}^2)$), the estimator $\hat{\bbeta} = \hat{\bbeta}(\bar{\theta})$ given above is consistent and asymptotically normal, with asymptotic distribution given in (4.5) in \cite{Yin1991}. In this setting, it would be interesting to study the joint asymptotic distribution of $(\check{\theta},\check{\sigma}^2,\hat{\bbeta})$.

\section{Concluding remarks} \label{sec:ccl}

We have proved the consistency and asymptotic normality of the CV estimator of the microergodic parameter $\theta_0 \sigma_0^2$, based on the logarithmic score. 
While the ML asymptotic variance of $( \sqrt{n}/ [\theta_0 \sigma_0^2] ) (\hat{\theta}_{ML} \hat{\sigma}_{ML}^2 - \theta_0 \sigma_0^2) $ is $2$ for any triangular array of observation points, the corresponding CV asymptotic variance is simply bounded between $2$ and $4$, those bounds being tight. The triangular array we exhibit, achieving the asymptotic variance $2$ for CV, is based on some ratios between interpoint distances (of the form $(s_{i+1} - s_{i})/(s_{j+1} - s_{j})$) going to zero as $n \to \infty$, which makes it too challenging to simulate numerically for large $n$. It would be interesting to find the smallest possible asymptotic variance for CV, when the ratios between interpoint distances are bounded away from zero.

One interesting agenda for future research would be to extend this asymptotic analysis of CV in the other settings where such an analysis was possible for ML.
These settings include the case of measurement errors for the exponential covariance function in dimension one \cite{CheSimYin2000,chang2017mixed},
the case of the separable exponential covariance function in higher dimension \cite{ying93maximum} (consistency and asymptotic normality), of the separable Mat\'ern $3/2$ covariance function \cite{Loh2005} (consistency) and of the Gaussian covariance function \cite{LohLam2000} (consistency). In these references, tractable approximations of the inverse covariance matrices are provided, which could also be exploited in the case of CV. Finally, using techniques which are more spectral in nature, \cite{DuZhaMan2009,ShaKau2013,WanLoh2011} prove central limit theorems for the ML estimation of the microergodic parameter in the isotropic Mat\'ern model. An extension to CV would also be valuable.

\section{Proofs}\label{sec:proofs}

\subsection{Notation and auxiliary results}\label{ssec:aux_nota}

Remind that $\Delta_i= s_i - s_{i-1}$ and introduce $W_{i}\defeq
[y_i - e^{-\theta \Delta_i}y_{i-1}]$ and its normalized version $\overline W_{i}\defeq
[y_i - e^{-\theta \Delta_i}y_{i-1}]/[\sigma^2( 1 - e^{-2\theta \Delta_i})]^{1/2}$ for $i = 2,\ldots, n$ (the dependency in $n$ is not mentioned in $W_{i}$ and $\overline W_{i}$ to lighten notation). When $(\theta,\sigma^2)=(\theta_0,\sigma_0^2)$, the random variables will be denoted $W_{i,0}$ and $\overline W_{i,0}$. By the Markovian
and Gaussian properties of $Y$, it follows that for each $i\geq 2$, $\overline W_{i,0}$ is
independent of $\{y_j,\ j \leq i - 1\}$. Therefore, $\{\overline W_{i,0},\ 2\leq i \leq n\}$ is an i.i.d.\
sequence of random variables having the standard normal distribution $\mathcal{N}(0, 1)$. \\
It is convenient to have short expressions for terms that converge in probability to zero. We follow \cite{van2000asymptotic}.
The notation $o_{\P}(1)$ (respectively $O_{\P}(1)$) stands for a sequence of random variables that converges to zero in probability (resp. is bounded in probability) as $n \to \infty$. More generally, for a sequence of random variables $R_n$,
\beq
X_n&=&o_{\P}(R_n) \quad \textrm{means} \quad X_n=Y_nR_n \quad \textrm{with} \quad Y_n\overset{\P}{\rightarrow}0\\
X_n&=&O_{\P}(R_n) \quad \textrm{means} \quad X_n=Y_nR_n \quad \textrm{with} \quad Y_n=O_{\P}(1).
\eeq 
For deterministic sequences $X_n$ and $R_n$, the stochastic notation reduce to the usual $o$ and $O$.
Throughout the paper, by $K$, we denote a generic constant (i.e.\ $K$ may or may not be the same at different occurrences) that does not depend on $(Y,\theta,\sigma^2,n)$.

We also denote by $\delta_n$ and $\bar{\delta}_n$ two sequences of random variables satisfying
\[
\sup_{(\theta,\sigma^2) \in J} |\delta_n| = O_{\P}(1)
\]
and
\[
\sup_{(\theta,\sigma^2) \in J} | \bar{\delta}_n| = o(n), \mbox{a.s.}
\]
The definition of $\delta_n$ and $\bar{\delta}_n$ may change from one line to the other. Similarly, we denote by $\delta_{i,n}$ a triangular array of deterministic scalars satisfying 
\[
\sup_{ \substack{ n \in \N, i=1,...,n \\ (\theta,\sigma^2) \in J} } |\delta_{i,n}| \leq K.
\]
The definition of $\delta_{i,n}$ may change from one line to the other, and possibly also in different occurrences within the same line.
We also use several times that,
\begin{align}
&\sum_{i=2}^{n-1} \Delta_i =1, \label{ass:delta_sum}\\
&\underset{t\in [0,1]} \sup |Y(t)| <+\infty  ~ ~\mbox{a.s.}\label{ass:y_bounded}
\end{align} 

Before turning to the proof of Theorems \ref{th:consistency} and \ref{th:asymptotic}, we state five auxiliary lemmas that will be required in the sequel.

\begin{lemma}\label{lem:lem1}
\begin{itemize}
\item[(i)]  Let $\lambda_i>0$, $i=1,2$ be fixed. Then as $x\to 0$,
\begin{align*}
\frac{1-e^{-\lambda_1 x}}{1-e^{-\lambda_2 x}}=\frac{\lambda_1}{\lambda_2}+O(x).
\end{align*}
\item[(ii)] Let $\lambda>0$ be fixed. Then as $x,y\to 0$,
\begin{align*}
1+e^{-\lambda y}\frac{1-e^{-\lambda x}}{1-e^{-\lambda y}}= \frac{x+y}{y} (1 + O(x+y)) .
\end{align*}
\end{itemize}
\end{lemma}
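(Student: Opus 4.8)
The plan is to reduce both statements to elementary Taylor expansions of $1 - e^{-\lambda t}$ near $t = 0$, relying on the single basic fact that, for fixed $\lambda > 0$, one has $1 - e^{-\lambda t} = \lambda t\, (1 + O(t))$ as $t \to 0$. Throughout, the implied $O$-constants are allowed to depend on the fixed parameters $\lambda_1, \lambda_2, \lambda$, but not on the variables $x, y$.

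For part (i), I would write both numerator and denominator in the factored form $1 - e^{-\lambda_j x} = \lambda_j x\,(1 + O(x))$, so that the common factor $x$ cancels and the ratio becomes
\[
\frac{1 - e^{-\lambda_1 x}}{1 - e^{-\lambda_2 x}} = \frac{\lambda_1}{\lambda_2}\cdot \frac{1 + O(x)}{1 + O(x)}.
\]
I would then expand the reciprocal via $1/(1 + O(x)) = 1 + O(x)$ (valid once $x$ is small enough that the denominator is bounded away from zero), multiply out, and absorb all higher-order terms into a single $O(x)$. Since $\lambda_1, \lambda_2$ are fixed, the prefactor $\lambda_1/\lambda_2$ multiplying $O(x)$ is again $O(x)$, which yields $\lambda_1/\lambda_2 + O(x)$.

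For part (ii), the key step, and really the only non-mechanical point, is an algebraic simplification performed \emph{before} any expansion. Placing the expression over the common denominator $1 - e^{-\lambda y}$ gives
\[
1 + e^{-\lambda y}\frac{1 - e^{-\lambda x}}{1 - e^{-\lambda y}}
= \frac{(1 - e^{-\lambda y}) + e^{-\lambda y}(1 - e^{-\lambda x})}{1 - e^{-\lambda y}}
= \frac{1 - e^{-\lambda(x+y)}}{1 - e^{-\lambda y}},
\]
where the numerator telescopes because $e^{-\lambda y} - e^{-\lambda y}e^{-\lambda x} = e^{-\lambda y} - e^{-\lambda(x+y)}$. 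With this identity in hand, I would apply the expansion $1 - e^{-\lambda t} = \lambda t\,(1 + O(t))$ to the numerator (with $t = x + y$) and to the denominator (with $t = y$), obtaining
\[
\frac{\lambda(x+y)\,(1 + O(x+y))}{\lambda y\,(1 + O(y))}
= \frac{x+y}{y}\,(1 + O(x+y)),
\]
after using $1/(1 + O(y)) = 1 + O(y)$ and absorbing $O(y)$ into $O(x+y)$; this last absorption is legitimate because $x$ and $y$ are nonnegative interpoint distances, so $|y| \leq x + y$.

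There is essentially no obstacle here: both parts are routine once one spots the telescoping identity in (ii), which converts an apparently genuine two-variable expansion into the quotient of two one-variable expansions. The only points requiring a little care are keeping track that the $O$-constants depend solely on the fixed $\lambda$'s, and ensuring that the arguments remain in the regime of small positive $t$, where the denominators $1 - e^{-\lambda t}$ are comparable to $\lambda t$ and hence bounded away from zero relative to $t$.
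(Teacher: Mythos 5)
Your proof is correct; the paper itself gives no argument for this lemma, stating only that ``the proof of Lemma 3.1 is direct,'' and your write-up is exactly the direct argument being alluded to: the one-variable expansion $1-e^{-\lambda t}=\lambda t\,(1+O(t))$ for part (i), and for part (ii) the telescoping identity $(1-e^{-\lambda y})+e^{-\lambda y}(1-e^{-\lambda x})=1-e^{-\lambda(x+y)}$ followed by the same expansion in numerator and denominator. Your attention to the absorption $O(y)\subseteq O(x+y)$, justified by the positivity of the interpoint distances, is precisely the point that makes the two-variable statement reduce to one-variable expansions.
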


\begin{lemma}\label{lem:lem2}
For any constant $\delta > 0$, there exists a constant $\eta> 0$ such that
\begin{align}\label{eq:lem2}
\underset{ \substack{ |x-1|\geq \delta \\ x>0}}{\inf} (x-1-\log x)\geq \eta.
\end{align}
\end{lemma}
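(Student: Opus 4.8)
The plan is to reduce the claim to elementary properties of the single-variable function $g(x) = x - 1 - \log x$ on $(0,\infty)$, exploiting its convexity. First I would compute $g'(x) = 1 - 1/x$ and $g''(x) = 1/x^2 > 0$, so that $g$ is strictly convex with a unique critical point at $x = 1$, where $g(1) = 0$. Hence $g$ attains its global minimum value $0$ only at $x = 1$, is strictly decreasing on $(0,1)$ and strictly increasing on $(1,\infty)$, and satisfies $g(x) \to +\infty$ both as $x \to 0^+$ and as $x \to +\infty$.

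Next I would split the feasible region $\{x > 0 : |x-1| \geq \delta\}$ into the two pieces $(0, 1-\delta]$ (present only when $\delta < 1$) and $[1+\delta, +\infty)$. By the monotonicity just established, on $[1+\delta, +\infty)$ the function $g$ is increasing, so its infimum over this ray equals the left endpoint value $g(1+\delta)$; likewise, on $(0, 1-\delta]$ the function $g$ is decreasing, so its infimum over this interval equals the right endpoint value $g(1-\delta)$ (the behaviour at $0^+$ being $+\infty$). Both endpoint values are strictly positive, since $1 \pm \delta \neq 1$ and $g$ vanishes only at $1$.

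Finally I would take $\eta = g(1+\delta) > 0$ in the case $\delta \geq 1$, and $\eta = \min\{\, g(1-\delta),\, g(1+\delta) \,\} > 0$ in the case $0 < \delta < 1$. In either case, combining the two infima above yields $g(x) \geq \eta$ for every $x > 0$ with $|x-1| \geq \delta$, which is exactly the desired bound \eqref{eq:lem2}.

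There is essentially no difficult step here. The only mild subtlety is that the feasible region is not compact—it is half-open at $0$ and unbounded on the right—so one cannot simply invoke compactness to claim the infimum of the continuous positive function $g$ is attained and positive. This is precisely why I would use monotonicity on each piece together with the divergence of $g$ at $0^+$ and at $+\infty$: these facts reduce the infimum to the two endpoint values $g(1-\delta)$ and $g(1+\delta)$, sidestepping any issue of non-compactness.
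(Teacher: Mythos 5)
Your proof is correct: the convexity/monotonicity analysis of $g(x)=x-1-\log x$, the split of the region $\{x>0:\ |x-1|\geq\delta\}$ into $(0,1-\delta]$ and $[1+\delta,+\infty)$, and the choice $\eta=\min\{g(1-\delta),g(1+\delta)\}$ (or $\eta=g(1+\delta)$ when $\delta\geq 1$) give exactly \eqref{eq:lem2}, and you rightly note that monotonicity plus divergence at $0^+$ and $+\infty$ is what replaces a compactness argument. The paper itself does not prove this lemma but defers to \cite{Yin1991}, where the argument is this same elementary one, so your proposal matches the intended proof in approach and is complete as written.
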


\begin{lemma}\label{lem:lem3}
Suppose that for each $n$, the random variables $Z$, $Z_{1,n}$, \ldots, $Z_{n,n}$ are independent and
identically distributed and centered. Suppose also that for
some $t>0$ and $p>0$, $\E[\exp\{t |Z|^p\}] < \infty$. Then for all $\alpha>0$, a.s\,
\begin{align}
\underset{1\leq k\leq n}{\sup} |Z_{k,n}|=o(n^{\alpha})\label{eq:lem31}\\
\sum_{k=2}^n Z_{k,n}=o(n^{(1/2)+\alpha}).\label{eq:lem32}
\end{align}
\end{lemma}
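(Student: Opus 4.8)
The plan is to prove both \eqref{eq:lem31} and \eqref{eq:lem32} by combining the Borel--Cantelli lemma with quantitative tail and moment bounds extracted from the exponential-moment hypothesis. The starting point is to record two consequences of $\E[\exp\{t|Z|^p\}] < \infty$. First, applying Markov's inequality to $\exp\{t|Z|^p\}$ gives a Chernoff-type tail bound: with $C \defeq \E[\exp\{t|Z|^p\}]$,
\[
\P(|Z| > x) = \P\big(e^{t|Z|^p} > e^{t x^p}\big) \leq C\, e^{-t x^p} \quad \text{for all } x > 0.
\]
Second, since $e^{t|Z|^p} \geq (t|Z|^p)^k/\fact{k}$ for every integer $k$, all polynomial moments are finite: $\E|Z|^q < +\infty$ for every $q > 0$. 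Throughout, I treat the whole triangular array as defined on one probability space and establish the a.s.\ statements for a fixed $\alpha > 0$; the quantifier ``for all $\alpha$'' then follows by intersecting over a countable dense set of $\alpha$ and using monotonicity.

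For \eqref{eq:lem31}, fix $\alpha > 0$ and $\epsilon > 0$. A union bound over the $n$ identically distributed terms together with the Chernoff estimate gives
\[
\P\Big(\underset{1\leq k\leq n}{\max} |Z_{k,n}| > \epsilon n^{\alpha}\Big) \leq n\, \P(|Z| > \epsilon n^{\alpha}) \leq C\, n\, \exp\{- t \epsilon^p n^{\alpha p}\}.
\]
Because $\alpha p > 0$, the right-hand side decays super-polynomially and is therefore summable in $n$. Borel--Cantelli then shows that the event $\{\max_{k}|Z_{k,n}| > \epsilon n^{\alpha}\}$ occurs only finitely often, a.s. Intersecting over $\epsilon \in \{1/m : m \in \N\}$ yields $\max_{1\leq k\leq n}|Z_{k,n}|/n^{\alpha} \to 0$ a.s., which is \eqref{eq:lem31}.

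For \eqref{eq:lem32}, set $S_n \defeq \sum_{k=2}^{n} Z_{k,n}$, a sum of $n-1$ i.i.d.\ centered copies of $Z$. The obstacle here, and the main point of the proof, is that $S_n$ is built from a \emph{triangular array} whose summands change with $n$, so the classical strong law does not apply and a summable tail bound on $S_n$ must be produced by hand. I would obtain this from a high even-moment estimate. Fixing an even integer $2m$, Rosenthal's inequality (equivalently, the classical even-moment expansion for sums of i.i.d.\ centered variables) gives, using that all moments of $Z$ are finite,
\[
\E|S_n|^{2m} \leq C_m\big[ ((n-1)\,\E[Z^2])^{m} + (n-1)\,\E|Z|^{2m} \big] = O(n^{m}),
\]
where $C_m$ depends only on $m$. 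Markov's inequality then yields, for any $\epsilon > 0$,
\[
\P\big(|S_n| > \epsilon\, n^{1/2 + \alpha}\big) \leq \frac{\E|S_n|^{2m}}{\epsilon^{2m}\, n^{m(1 + 2\alpha)}} \leq \frac{C_m'}{\epsilon^{2m}}\, n^{-2 m \alpha}.
\]
Choosing $m > 1/(2\alpha)$ makes the exponent $-2m\alpha < -1$, so the bound is summable in $n$; Borel--Cantelli together with the same intersection over $\epsilon = 1/m$ gives $S_n / n^{1/2 + \alpha} \to 0$ a.s., which is \eqref{eq:lem32}.

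The delicate step is the one just described: quantifying the concentration of $S_n$ sharply enough to be summable over $n$. I expect the moment route to be the cleanest, since it is uniform in the value of $p$ and uses only that all polynomial moments of $Z$ are finite. A sub-exponential/Bernstein argument would also deliver \eqref{eq:lem32} when $p \geq 1$ (the bound $x^2/n$ with $x = \epsilon n^{1/2+\alpha}$ producing $\exp\{-c\epsilon^2 n^{2\alpha}\}$), but it becomes more technical in the heavier-tailed regime $0 < p < 1$, which is why I would prefer the moment bound above.
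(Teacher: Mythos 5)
Your proof is correct. Note first that the paper itself contains no proof of this lemma: in Section \ref{ssec:aux_nota} it simply states that the proof can be found in \cite{Yin1991}, so the comparison is with Ying's argument rather than with anything printed here. Your treatment of \eqref{eq:lem31} (the Chernoff bound $\P(|Z|>x)\leq C e^{-tx^p}$, a union bound over the $n$ terms, Borel--Cantelli, and a countable intersection over $\epsilon$ and $\alpha$) is the standard one. For \eqref{eq:lem32} you take a route that differs from the classical one: under the hypothesis $\E[\exp\{t|Z|^p\}]<\infty$ with $0<p<1$ the moment generating function of $Z$ need not exist, so a direct Chernoff bound on the sum is unavailable, and the usual argument for lemmas of this type (as in \cite{Yin1991}) copes with this by truncating the summands at a small polynomial level, applying a Bernstein-type exponential inequality to the truncated and recentred variables, and invoking the first assertion to show the truncation is a.s.\ inactive for large $n$. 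You avoid truncation altogether: Rosenthal's inequality gives $\E|S_n|^{2m}=O(n^m)$ (legitimate, since the exponential moment makes all polynomial moments of $Z$ finite), Markov's inequality with any integer $m>1/(2\alpha)$ then yields a tail bound $O(n^{-2m\alpha})$ that is summable, and Borel--Cantelli finishes as before. Both routes are valid; yours is more elementary and uniform in $p$ (and you correctly identify the $p<1$ regime as the reason a naive sub-exponential argument is delicate), at the cost of producing only polynomially small tail probabilities where the truncation route gives exponentially small ones --- a loss that is immaterial for the $o(n^{1/2+\alpha})$ conclusion needed here.
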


The proof of Lemma \ref{lem:lem1} is direct and these of Lemmas \ref{lem:lem2} and \ref{lem:lem3} can be found in \cite{Yin1991}.\\

\begin{lemma}\label{lem:lem4}
Let for any $i \in \{ 2,...,n-1\}$,
\[
\eta_i = 
\frac{\Delta_{i+1}}{\Delta_i+\Delta_{i+1}}\left(1+ \delta_{i,n} (\Delta_i+\Delta_{i+1})\right)  
\]
and
\[
\tau_i = \frac{\Delta_i}{\Delta_i+\Delta_{i+1}}\left(1+ \delta_{i,n} (\Delta_i+\Delta_{i+1})\right).
\]

Then
\begin{itemize}
\item[(i)] $\underset{( \theta,\sigma^2) \in J}{\sup} \left|\sum_{i=2}^{n-1} \eta_i W_{i,0}y_{i} \right|= O_{\P}(1)$;

\item[(ii)] $\underset{( \theta,\sigma^2) \in J}{\sup} \left|\sum_{i=2}^{n-1} \eta_i W_{i}y_{i} \right|= O_{\P}(1)$;

\item[(iii)] $\underset{( \theta,\sigma^2) \in J}{\sup} \left|\sum_{i=2}^{n-1} \tau_i W_{i+1,0}y_{i-1} \right|= O_{\P}(1)$;

\item[(iv)] $\underset{( \theta,\sigma^2) \in J}{\sup} \left|\sum_{i=2}^{n-1} \tau_i W_{i+1}y_{i-1} \right|= O_{\P}(1)$.

 \end{itemize}
\end{lemma}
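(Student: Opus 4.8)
The plan is to treat the four bounds in a unified way, reducing all of them to a second-moment estimate for a single parameter-free leading term. The genuine difficulty is the uniformity over $(\theta,\sigma^2)\in J$, and the guiding idea is that the entire parameter dependence can be pushed into remainder terms that carry an extra factor $\Delta_i$ or $\Delta_{i+1}$; these are then killed by crude $L^1$ bounds that no longer involve $(\theta,\sigma^2)$, while the parameter-free leading term is handled by the martingale structure of $\overline W_{i,0}$.

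First I would peel off the parameter dependence in the coefficients. Using $\frac{\Delta_{i+1}}{\Delta_i+\Delta_{i+1}}\,\delta_{i,n}(\Delta_i+\Delta_{i+1})=\Delta_{i+1}\delta_{i,n}$, one writes $\eta_i=\frac{\Delta_{i+1}}{\Delta_i+\Delta_{i+1}}+\Delta_{i+1}\delta_{i,n}$ and likewise $\tau_i=\frac{\Delta_i}{\Delta_i+\Delta_{i+1}}+\Delta_i\delta_{i,n}$, where only the second summand depends on $(\theta,\sigma^2)$ and only through $|\delta_{i,n}|\leq K$. In (i) the contribution of the $\delta$-summand is bounded by $K\sum_{i=2}^{n-1}\Delta_{i+1}|W_{i,0}||y_i|$, which is now parameter-free; since $\E[W_{i,0}^2]=\sigma_0^2(1-e^{-2\theta_0\Delta_i})=O(\Delta_i)$ and $\E[y_i^2]=\sigma_0^2$, Cauchy--Schwarz gives $\E\sum_i\Delta_{i+1}|W_{i,0}||y_i|=\sum_i\Delta_{i+1}O(\sqrt{\Delta_i})=O(1)$, hence $O_{\P}(1)$ by Markov, uniformly over $J$. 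The identical computation disposes of the $\delta$-summand in (iii).

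Next I would reduce the generic-parameter bounds (ii) and (iv) to the true-parameter bounds (i) and (iii). Since $W_i-W_{i,0}=(e^{-\theta_0\Delta_i}-e^{-\theta\Delta_i})y_{i-1}$ with $|e^{-\theta_0\Delta_i}-e^{-\theta\Delta_i}|\leq|\theta-\theta_0|\Delta_i\leq K\Delta_i$ uniformly over $\theta\in[a,A]$, and since $|\eta_i|\leq K$, the difference $\sum_i\eta_i(W_i-W_{i,0})y_i=\sum_i\eta_i(e^{-\theta_0\Delta_i}-e^{-\theta\Delta_i})y_{i-1}y_i$ is bounded in absolute value by $K\big(\sup_{t\in[0,1]}|Y(t)|\big)^2\sum_i\Delta_i=O_{\P}(1)$ using \eqref{ass:delta_sum} and \eqref{ass:y_bounded}; the analogous estimate handles (iv). Thus (ii) follows from (i) and (iv) from (iii).

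It remains to control the parameter-free leading terms of (i) and (iii), and here the martingale property of $\overline W_{i,0}$ is essential. For (iii), the summand $\frac{\Delta_i}{\Delta_i+\Delta_{i+1}}W_{i+1,0}y_{i-1}$ is a martingale difference relative to $\mathcal F_{i+1}=\sigma(y_1,\dots,y_{i+1})$, because $\overline W_{i+1,0}$ is centered and independent of $\mathcal F_i\ni y_{i-1}$; orthogonality of increments then yields a second moment equal to $\sum_{i=2}^{n-1}\big(\frac{\Delta_i}{\Delta_i+\Delta_{i+1}}\big)^2\E[W_{i+1,0}^2]\,\sigma_0^2=O\big(\sum_i\Delta_{i+1}\big)=O(1)$, so the sum is $O_{\P}(1)$. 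For (i), I would substitute $y_i=e^{-\theta_0\Delta_i}y_{i-1}+W_{i,0}$, splitting the leading term into a martingale part $\sum_i\frac{\Delta_{i+1}}{\Delta_i+\Delta_{i+1}}e^{-\theta_0\Delta_i}W_{i,0}y_{i-1}$, controlled exactly as in (iii), and a nonnegative mean part $\sum_i\frac{\Delta_{i+1}}{\Delta_i+\Delta_{i+1}}W_{i,0}^2$ whose expectation is $\sum_i\frac{\Delta_{i+1}}{\Delta_i+\Delta_{i+1}}O(\Delta_i)=O\big(\sum_i\Delta_{i+1}\big)=O(1)$, again giving $O_{\P}(1)$ by Markov. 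The one genuinely delicate point is precisely this split in (i): unlike (iii), the product $W_{i,0}y_i$ has a nonvanishing conditional mean, so pure martingale orthogonality is unavailable and one must isolate the $W_{i,0}^2$ term and exploit the summability $\frac{\Delta_i\Delta_{i+1}}{\Delta_i+\Delta_{i+1}}\leq\Delta_{i+1}$.
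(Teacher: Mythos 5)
Your proof is correct and follows essentially the same route as the paper's: split each coefficient into a parameter-free leading part plus a remainder carrying an extra factor $\Delta_i$ or $\Delta_{i+1}$, control the leading part in $L^2$ via orthogonality of the summands (the martingale structure of the $W_{i,0}$), kill the remainder in $L^1$ by Cauchy--Schwarz, and reduce (ii) and (iv) to (i) and (iii) through $W_i-W_{i,0}=(e^{-\theta_0\Delta_i}-e^{-\theta\Delta_i})y_{i-1}$ together with the almost-sure boundedness of $Y$. Where you add value is case (i): the paper proves only (iii) and (iv) and declares (i)--(ii) ``similar'', but, as you correctly observe, (i) is not a verbatim copy, because $W_{i,0}$ and $y_i$ are dependent; indeed $\E[W_{i,0}y_i\,W_{j,0}y_j]=\E[W_{i,0}^2]\,\E[W_{j,0}^2]\neq 0$ for $i<j$, so the exact orthogonality computation used for the term $L_1$ in (iii) does not transfer as stated. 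Your fix --- substituting $y_i=e^{-\theta_0\Delta_i}y_{i-1}+W_{i,0}$, treating the martingale part as in (iii) and the nonnegative part $\sum_i\frac{\Delta_{i+1}}{\Delta_i+\Delta_{i+1}}W_{i,0}^2$ by Markov's inequality with the bound $\frac{\Delta_i\Delta_{i+1}}{\Delta_i+\Delta_{i+1}}\leq\Delta_{i+1}$ --- is correct and is the natural way to fill this detail the paper glosses over. (An alternative patch would be to keep the paper's computation and note that the nonvanishing cross terms total at most $\bigl(K\sum_i\Delta_i\bigr)^2=O(1)$, so $\E[L_1^2]=O(1)$ still holds; either way the conclusion stands.)
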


\begin{proof}[Proof of Lemma \ref{lem:lem4}] We only prove (iii) and (iv), the proof of (i) and (ii) being similar.

(iii) We have
\begin{align*}
\left| \sum_{i=2}^{n-1} \tau_i W_{i+1,0}y_{i-1} \right| & \leq  \left| \sum_{i=2}^{n-1} \frac{\Delta_i}{\Delta_i+\Delta_{i+1}} W_{i+1,0}y_{i-1} \right| + K \sum_{i=2}^{n-1} \Delta_i  \left| W_{i+1,0}y_{i-1} \right| = :  L_1+L_2.
\end{align*}
Using the fact that $\frac{\Delta_i}{\Delta_i+\Delta_{i+1}}\leq 1$ and that for $i \neq j$, 
\begin{align*}
\mathbb{E}[W_{i+1,0}y_{i-1}W_{j+1,0}y_{j-1}] =0,
\end{align*}
we get 
\begin{align*}
\mathbb{E}[ L_1^2] 
 \leq & 
 \sum_{i=2}^{n-1} \mathbb{E}[W_{i+1,0}^2y_{i-1}^2] 
=
 \sum_{i=2}^{n-1}  \mathbb{E}[y_{i-1}^2]\mathbb{E}[W_{i+1,0}^2]\\
\leq & K \sum_{i=2}^{n-1} (1- e^{-2\theta_0\Delta_{i+1}}) \leq K \sum_{i=2}^{n-1} \Delta_{i+1} 
=  O(1),
\end{align*}
using the fact that $W_{i+1,0}$ and $y_{i-1}$ are independent and a Taylor expansion of $\Delta_{i+1}\mapsto 1- e^{-2\theta_0\Delta_{i+1}}$.
Now, by Cauchy-Schwarz,
\begin{align*}
\E( |L_2| ) \leq & K   \sum_{i=2}^{n-1} \Delta_i \E ( |W_{i+1,0}||y_{i-1}| ) 
\leq
K \sum_{i=2}^{n-1} \Delta_i \sqrt{ \E( W_{i+1,0}^2 ) }
\sqrt{ \E( y_{i-1}^2 ) }
= O(1).
\end{align*}
Hence, since $L_1$ and $L_2$ do not depend on $\theta$ and $\sigma^2$, we have
$\underset{( \theta,\sigma^2) \in J}{\sup} \left|\sum_{i=2}^{n-1} \tau_i W_{i+1,0}y_{i-1}\right| = O_{\P}(1)$.\\

(iv) Now, we use the decomposition 
\begin{align}\label{eq:decomp_W}
W_{i+1}=W_{i+1,0}+(e^{-\theta_0\Delta_{i+1}}-e^{-\theta\Delta_{i+1}})y_i
\end{align} 
to get
\begin{align*}
\left| \sum_{i=2}^{n-1} \tau_i W_{i+1}y_{i-1} \right| & =
\left| \sum_{i=2}^{n-1}\tau_i W_{i+1,0}y_{i-1} + \sum_{i=2}^{n-1} \tau_i  (e^{-\theta_0\Delta_{i+1}}-e^{-\theta\Delta_{i+1}})y_i y_{i-1} \right|\\
&\leq  \left| \sum_{i=2}^{n-1}\tau_i W_{i+1,0}y_{i-1} \right| + \sum_{i=2}^{n-1} \tau_i |e^{-\theta_0\Delta_{i+1}}-e^{-\theta\Delta_{i+1}}| |y_i y_{i-1}|.
\end{align*}
The first sum is $O_{\P}(1)$ by (iii). The second sum is $O_\P(1)$ by using the fact that $\sup_{t \in [0,1]} Y^2(t) < \infty$ a.s. and  
\[
\underset{n \in \N,i=2,...,n-1,( \theta,\sigma^2) \in J}{\sup} \frac{ |\tau_i| |e^{-\theta_0\Delta_{i+1}}-e^{-\theta\Delta_{i+1}}|}{\Delta_{i+1}}  \leq K.
\]
Hence $\underset{( \theta,\sigma^2) \in J}{\sup} \left|\sum_{i=2}^{n-1} \tau_i W_{i+1}y_{i-1}\right| = O_{\P}(1)$.
\end{proof}

We can show, after some tedious but straightforward calculations, the following Taylor expansions.

\begin{lemma}\label{lem:lem5_bis}
Let
\begin{align}
\alpha_i & \defeq (\sigma^2(1-e^{- 2 \theta \Delta_i}))^{-1}, \quad\quad\quad\quad \mbox{for $i=2,...,n$,} \label{def:alpha}\\
\alpha_{i,0} & \defeq (\sigma_0^2(1-e^{- 2 \theta_0  \Delta_i}))^{-1}, \quad\quad\quad\quad \mbox{for $i=2,...,n$,} \label{def:alpha0} \\
q_i & \defeq \frac{\Delta_{i+1}}{\Delta_i+\Delta_{i+1}}+\frac{\Delta_{i-1}}{\Delta_i+\Delta_{i-1}}, \quad \mbox{for $i=3,...,n-1$,} \label{def:q}\\
A_i & \defeq \alpha_i + \alpha_{i+1} e^{-2 \theta \Delta_{i+1}}, \quad\quad\quad\quad \mbox{for $i=2,...,n-1$,} \label{def:A}\\
B_i & \defeq  \frac{1}{A_i}+\frac{e^{-2 \theta \Delta_i}}{A_{i-1}}, \quad\quad\quad\quad\quad\quad \mbox{for $i=3,...,n-1$,} \label{def:B} \\
C_i & \defeq (\alpha_i \alpha_{i+1} e^{-\theta \Delta_{i+1}})/A_i, 
\quad\quad\quad \mbox{for $i=2,...,n-1$.} \label{def:C}
\end{align}
Let $\alpha'_i =  \partial \alpha_i / \partial \theta$, $A'_i =  \partial A_i/ \partial \theta$, $B'_i =  \partial B_i / \partial \theta$ and
$C'_i =  \partial C_i / \partial \theta$. Note that we have
\begin{align}\label{def:C:prime}
C'_i = \frac{e^{-\theta \Delta_{i+1}}}{A_i} \left\{\alpha'_i\alpha_{i+1} +\alpha_i\alpha'_{i+1}-\alpha_i\alpha_{i+1}\Delta_{i+1} - \frac{\alpha_i\alpha_{i+1}A'_i}{A_i}\right\}.
\end{align}
We have, 
\begin{align}  
\underset{\substack{n \in \N, i=2,...,n, \\ (\theta,\sigma^2) \in J}}{\sup}
\left | \alpha_i - \frac{1}{2 \sigma^2 \theta \Delta_i} \right| & \leq K \label{eq:DL:alpha}\\
\underset{\substack{n \in \N, i=2,...,n, \\ (\theta,\sigma^2) \in J}}{\sup}  \left | \alpha'_i - \frac{1}{2 \sigma^2 \theta^2 \Delta_i} \right| & \leq K \label{eq:DL:alpha:prime}\\
 \alpha_i^2 = \frac{1}{4 \sigma^4 \theta^2 \Delta_i^2} + \delta_{i,n}  \frac{1}{\Delta_i} \label{eq:DL:alpha:carre}\\
\underset{\substack{n \in \N, i=2,...,n-1, \\ (\theta,\sigma^2) \in J}}{\sup}  \left | A_i -  \frac{1}{2 \sigma^2 \theta} \left( \frac{1}{\Delta_i } + \frac{1}{ \Delta_{i+1} } \right) \right| & \leq K \label{eq:DL:A}\\
\underset{\substack{n \in \N, i=2,...,n-1, \\ (\theta,\sigma^2) \in J}}{\sup} \left | A'_i - \frac{1}{2 \sigma^2 \theta^2} \left( \frac{1}{\Delta_i } + \frac{1}{ \Delta_{i+1} } \right) \right| & \leq K \label{eq:DL:A:prime}\\
 \frac{A'_i}{A_i} = \frac{1}{\theta} + \delta_{i,n}  \frac{\Delta_i \Delta_{i+1}}{\Delta_i + \Delta_{i+1}}  \label{eq:DL:ratio:A}\\
 \alpha_i B'_i = -\frac{1}{\theta} q_i + \delta_{i,n} (\Delta_{i-1} + \Delta_i + \Delta_{i+1})  \label{eq:DL:alpha:B:prime}\\
 \alpha'_i B_i = \frac{1}{\theta} q_i + \delta_{i,n} (\Delta_{i-1} + \Delta_i + \Delta_{i+1}) \label{eq:DL:alpha:prime:B}\\
\underset{\substack{n \in \N, i=2,...,n-1, \\ (\theta,\sigma^2) \in J}}{\sup}  \left | C'_i - \frac{1}{2\sigma^2\theta^2}\frac{1}{\Delta_i + \Delta_{i+1}} \right| & \leq K. \label{eq:DL:C}
\end{align}
\end{lemma}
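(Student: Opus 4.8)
The plan is to derive everything from a single ``master expansion'' of the reciprocal $1/(1-e^{-2\theta x})$ together with its $\theta$-derivatives, and then to propagate these expansions algebraically through the definitions \eqref{def:alpha}--\eqref{def:C:prime}. The starting point is the elementary expansion, for $\lambda>0$ and $x\to 0$,
\[
1 - e^{-\lambda x} = \lambda x - \tfrac{1}{2}\lambda^2 x^2 + O(x^3),
\qquad
\frac{1}{1-e^{-\lambda x}} = \frac{1}{\lambda x} + \frac{1}{2} + O(x),
\]
where the $O$-terms are uniform for $\lambda=2\theta$ with $(\theta,\sigma^2)\in J$, since $J$ is compact with $\theta$ bounded away from $0$ and $\max_i\Delta_i\to0$ by \eqref{ass:delta}. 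This immediately gives \eqref{eq:DL:alpha}; squaring gives \eqref{eq:DL:alpha:carre}; and \eqref{eq:DL:alpha:prime} follows by differentiating the expansion term by term in $\theta$, which is legitimate because $x\mapsto 1/(1-e^{-2\theta x})$ is smooth and its remainder is analytic near $0$ with first derivative bounded uniformly over $J$. Lemma \ref{lem:lem1} is the one-variable prototype of these manipulations and will be invoked throughout.

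Next I would assemble the composite quantities. Writing $A_i=\alpha_i+\alpha_{i+1}e^{-2\theta\Delta_{i+1}}$ and using $e^{-2\theta\Delta_{i+1}}=1+O(\Delta_{i+1})$, the expansions of $\alpha_i$ and $\alpha_{i+1}$ produce the leading term $\frac{1}{2\sigma^2\theta}\big(\frac{1}{\Delta_i}+\frac{1}{\Delta_{i+1}}\big)$ of \eqref{eq:DL:A}, and differentiating yields \eqref{eq:DL:A:prime}. The reciprocal then has the harmonic-mean form $\frac{1}{A_i}=2\sigma^2\theta\,\frac{\Delta_i\Delta_{i+1}}{\Delta_i+\Delta_{i+1}}\big(1+O(\Delta_i+\Delta_{i+1})\big)$, which is the key algebraic object: every remaining identity is obtained by multiplying an $O(1/\Delta)$ quantity ($\alpha_i$ or $\alpha'_i$) by such an $O(\Delta)$ quantity, so that the leading singularities cancel and only the stated bounded (or $\Delta$-small) remainder survives. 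For \eqref{eq:DL:ratio:A} one divides \eqref{eq:DL:A:prime} by \eqref{eq:DL:A}; the leading terms match and produce $1/\theta$, the discrepancy being controlled by $\frac{\Delta_i\Delta_{i+1}}{\Delta_i+\Delta_{i+1}}$.

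The expansions \eqref{eq:DL:alpha:B:prime}--\eqref{eq:DL:alpha:prime:B} for $B_i=\frac{1}{A_i}+\frac{e^{-2\theta\Delta_i}}{A_{i-1}}$ are where the combinatorial quantity $q_i$ of \eqref{def:q} appears. The clean way to organize the computation is to first establish $\alpha_i B_i = q_i + O(\Delta_{i-1}+\Delta_i+\Delta_{i+1})$ directly from the harmonic-mean form, using $\alpha_i/A_i\approx \Delta_{i+1}/(\Delta_i+\Delta_{i+1})$ and $\alpha_i e^{-2\theta\Delta_i}/A_{i-1}\approx \Delta_{i-1}/(\Delta_{i-1}+\Delta_i)$. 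Since $q_i$ does not depend on $\theta$, this furnishes the consistency check $\alpha'_i B_i+\alpha_i B'_i\approx 0$, matching the opposite signs $\pm\frac{1}{\theta}q_i$ in \eqref{eq:DL:alpha:B:prime}--\eqref{eq:DL:alpha:prime:B}; the two identities are then obtained separately by differentiating $1/A_i$ and $1/A_{i-1}$ via \eqref{eq:DL:ratio:A} and collecting terms. Finally, \eqref{eq:DL:C} for $C'_i$ is read off from the explicit formula \eqref{def:C:prime} by substituting \eqref{eq:DL:alpha}--\eqref{eq:DL:A:prime} and \eqref{eq:DL:ratio:A}, the leading contributions combining into $\frac{1}{2\sigma^2\theta^2}\frac{1}{\Delta_i+\Delta_{i+1}}$.

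The only real difficulty is bookkeeping: in each product or ratio the leading $O(1/\Delta)$ terms cancel, so the asserted statements are all about the next order, and one must verify that every remainder is genuinely of the claimed size --- bounded by a fixed $K$, or of the form $\delta_{i,n}\cdot(\text{sum of interpoint distances})$ --- uniformly over both the index $i$ and the compact set $J$. This uniformity is what makes the argument more than a formal Taylor expansion: it rests on the fact that the argument $2\theta\Delta_i$ lies in the shrinking interval $[0,2A\max_i\Delta_i]$, on which all the relevant analytic functions and their first derivatives are bounded. Carrying along the three independent small parameters $\Delta_{i-1},\Delta_i,\Delta_{i+1}$ at once, rather than the single $x$ of Lemma \ref{lem:lem1}, is the principal source of tedium, but I expect no new idea beyond the master expansion to be required.
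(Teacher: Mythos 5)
Your strategy --- a single master expansion $\frac{1}{1-e^{-2\theta x}}=\frac{1}{2\theta x}+\frac{1}{2}+O(x)$, uniform over $J$ by compactness and \eqref{ass:delta}, propagated algebraically through \eqref{def:alpha}--\eqref{def:C:prime} --- is exactly the ``tedious but straightforward calculations'' that the paper invokes without giving any detail, and your intermediate identities (the harmonic-mean form of $1/A_i$, and $\alpha_i B_i = q_i + O(\Delta_{i-1}+\Delta_i+\Delta_{i+1})$) are correct. So in spirit you are carrying out the same proof as the paper.

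There is, however, a concrete problem: your plan, executed correctly, does \emph{not} yield the displayed formulas, because every $\theta$-derivative comes out with the opposite sign. Since $\theta\mapsto 1-e^{-2\theta\Delta_i}$ is increasing, $\alpha_i$ is decreasing in $\theta$, and term-by-term differentiation of your master expansion gives $\alpha_i'=-\frac{1}{2\sigma^2\theta^2\Delta_i}+O(1)$, whence $A_i'=-\frac{1}{2\sigma^2\theta^2}\left(\frac{1}{\Delta_i}+\frac{1}{\Delta_{i+1}}\right)+O(1)$, $A_i'/A_i=-\frac{1}{\theta}+\delta_{i,n}\frac{\Delta_i\Delta_{i+1}}{\Delta_i+\Delta_{i+1}}$, $\alpha_i B_i'=+\frac{q_i}{\theta}+\dots$, $\alpha_i' B_i=-\frac{q_i}{\theta}+\dots$, and $C_i'=-\frac{1}{2\sigma^2\theta^2(\Delta_i+\Delta_{i+1})}+O(1)$. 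In other words, \eqref{eq:DL:alpha:prime}, \eqref{eq:DL:A:prime}, \eqref{eq:DL:ratio:A}, \eqref{eq:DL:alpha:B:prime}, \eqref{eq:DL:alpha:prime:B} and \eqref{eq:DL:C} are false as literally stated and hold with the signs reversed; these are systematic sign typos in the lemma (harmless downstream, since the lemma is only used to locate the zero of $\psi=\partial S_n/\partial\theta$, and a global sign flip of $\psi$ leaves \eqref{eq:for:TCL} unchanged). Your write-up asserts that the correct calculus reproduces the stated signs, which it cannot; and the one safeguard you build in --- the check $\alpha_i'B_i+\alpha_iB_i'\approx 0$, coming from the fact that $\alpha_iB_i\approx q_i$ does not depend on $\theta$ --- is sign-blind: it is equally consistent with the stated (wrong) assignment of $\pm\frac{q_i}{\theta}$ and with the correct one, so it cannot detect the discrepancy. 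A careful version of your argument must either carry the minus signs through and state the corrected versions of these six expansions, or it will necessarily contain compensating sign errors in the differentiation steps.
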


\subsection{Proof of Theorem \ref{th:consistency}}\label{ssec:proof_cons}
The existence of $\hat{\theta}$ and $\hat{\sigma}^2$ is a consequence of the fact that $S_n$ is a continuous function defined on a compact set.
Now, it suffices to show \eqref{eq:cv_produit} since $\hat{\theta}_1\to \theta_1$ (resp. $\hat{\sigma}_2^2\to \sigma_2^2$) is a particular case of \eqref{eq:cv_produit} with $b=B=\sigma_1^2$ (resp. $a=A=\theta_2$).
Moreover, in view of \eqref{eq:min_couple}, the result \eqref{eq:cv_produit} holds if we can show that for every $\varepsilon>0$, a.s.
\begin{align}\label{eq:min_couple_2}
\underset{\substack{( \theta,\sigma^2)\in J, \\ |\theta\sigma^2-\widetilde{\theta}\widetilde{\sigma}^2|\geq \varepsilon}}{\inf} \{S_n( \theta,\sigma^2)-S_n(\widetilde{\theta},\widetilde{\sigma}^2)\}\to \infty
\end{align}
where $(\widetilde{\theta},\widetilde{\sigma}^2)\in J$ can be any non-random vector such that $\widetilde{\theta}\widetilde{\sigma}^2=\theta_0\sigma_0^2$.\\

Let us compute the difference $S_n( \theta,\sigma^2)-S_n(\widetilde{\theta},\widetilde{\sigma}^2)$ and determine the preponderant terms. After some computations, we naturally have
\begin{align} \label{eq:y:minus:haty}
y_i-\frac{\alpha_i e^{- \theta \Delta_i} y_{i-1}+ \alpha_{i+1}e^{-\theta \Delta_{i+1}} y_{i+1}}{A_i}=\frac{1}{A_i} \left( \alpha_i W_{i}-\alpha_{i+1} e^{-\theta \Delta_{i+1}}W_{i+1}\right),
\end{align}
where $\alpha_i$ and $A_i$ have already been defined in Equations \eqref{def:alpha} and \eqref{def:A}. Hence, from Lemma \ref{lem:Sn},
\begin{align}
S_n( \theta,\sigma^2)=& n \log(\sigma^2)+\log( 1-e^{-2 \theta \Delta_2} ) +  \log( 1-e^{-2 \theta \Delta_n} )-\sum_{i=2}^{n-1}\log\left(\sigma^2A_i\right)\label{eq:expression:Sn:un} \\
& + \frac{ (y_1 - e^{- \theta \Delta_2} y_2 )^2 }{\sigma^2 (1-e^{-2 \theta \Delta_2})}    + \frac{W_{n}^2 }{ \sigma^2 (1-e^{-2 \theta \Delta_n}) }\label{eq:rv_1} \\
&  +\sum_{i=2}^{n-1} \frac{1}{A_i} \left( \alpha_i W_{i}-\alpha_{i+1} e^{-\theta \Delta_{i+1}}W_{i+1}\right)^2.\label{eq:Sn}
\end{align}

In the following, we prove that the terms in \eqref{eq:rv_1} and those obtained by developing \eqref{eq:Sn}, except one, are $o(n)$ uniformly in $(\theta,\sigma^2)\in J$, a.s. More precisely, we establish the following lemma (see the proof in Section \ref{ssec:proof_lem_cons}).

\begin{lemma}\label{lem5} One has
\begin{align*}
S_n( \theta,\sigma^2)=& n \log(\sigma^2)+\log( 1-e^{-2 \theta \Delta_2} )  +  \log( 1-e^{-2 \theta \Delta_n} )
 -\sum_{i=2}^{n-1}\log\left(\sigma^2A_i\right) \\
& + \sum_{i=3}^{n-1} \alpha_i ^2 B_i W_{i,0}^2 + \bar{\delta}_{n}.
\end{align*}
\end{lemma}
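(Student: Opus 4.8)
The plan is to start from the exact identity \eqref{eq:expression:Sn:un}--\eqref{eq:Sn} for $S_n(\theta,\sigma^2)$, whose first line is already the first line of the claimed expression. It thus remains to show that the two boundary quadratic terms in \eqref{eq:rv_1}, together with the sum in \eqref{eq:Sn}, equal $\sum_{i=3}^{n-1}\alpha_i^2B_iW_{i,0}^2$ modulo a remainder $\bar\delta_n$ with $\sup_{(\theta,\sigma^2)\in J}|\bar\delta_n|=o(n)$ a.s. The recurring principle I would use is that a random, $(\theta,\sigma^2)$-dependent quantity $X_n$ with $\sup_{n}\sup_{J}\E[X_n^{2}]<\infty$ satisfies $\sup_J|X_n|=o(n)$ a.s.: indeed $\sum_n\P(\sup_J|X_n|>\varepsilon n)\le\sum_n K/(\varepsilon^2n^2)<\infty$, so Borel--Cantelli applies. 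These are precisely the second-moment computations already carried out in the proof of Lemma~\ref{lem:lem4}, and they upgrade the $O_{\P}(1)$ bounds there to the almost-sure $o(n)$ control needed here. The inputs are the Taylor estimates of Lemma~\ref{lem:lem5_bis}, the orthogonality of the innovations (recall $\overline W_{i,0}$ is independent of $\{y_j:\,j\le i-1\}$ and the $\overline W_{i,0}$ are i.i.d.\ standard normal), the identity $\sum_i\Delta_i=1$ from \eqref{ass:delta_sum}, and the a.s.\ boundedness \eqref{ass:y_bounded} of $Y$.

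Then I would expand the square in \eqref{eq:Sn} into $\alpha_i^2W_i^2-2\alpha_i\alpha_{i+1}e^{-\theta\Delta_{i+1}}W_iW_{i+1}+\alpha_{i+1}^2e^{-2\theta\Delta_{i+1}}W_{i+1}^2$, each divided by $A_i$. Reindexing the last piece via $j=i+1$ and merging it with the first piece yields, for each $i\in\{3,\dots,n-1\}$, the coefficient $\alpha_i^2\big(\tfrac1{A_i}+\tfrac{e^{-2\theta\Delta_i}}{A_{i-1}}\big)=\alpha_i^2B_i$ by the definition \eqref{def:B} of $B_i$, hence $\sum_{i=3}^{n-1}\alpha_i^2B_iW_i^2$, plus two boundary terms $\tfrac{\alpha_2^2}{A_2}W_2^2$ and $\tfrac{\alpha_n^2e^{-2\theta\Delta_n}}{A_{n-1}}W_n^2$. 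I would next pass from $W_i^2$ to $W_{i,0}^2$ through \eqref{eq:decomp_W}: the cross term has coefficient $\alpha_i^2B_i(e^{-\theta_0\Delta_i}-e^{-\theta\Delta_i})=O(1)$ uniformly (using $\alpha_i^2B_i=O(1/\Delta_i)$, which follows from \eqref{eq:DL:alpha:carre}, \eqref{def:B} and Lemma~\ref{lem:lem5_bis}, and $e^{-\theta_0\Delta_i}-e^{-\theta\Delta_i}=O(\Delta_i)$) and multiplies the mean-zero orthogonal products $W_{i,0}y_{i-1}$, so it has $O(1)$ total variance and is $o(n)$ a.s.; the pure-$y$ term has coefficient $O(\Delta_i)$ and multiplies $y_{i-1}^2$, hence is $O(1)$ a.s.\ by \eqref{ass:y_bounded} and \eqref{ass:delta_sum}.

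The crux, and the ingredient absent from Ying's maximum-likelihood analysis, is the off-diagonal term $-2\sum_{i=2}^{n-1}C_iW_iW_{i+1}$, with $C_i\sim 1/(2\sigma^2\theta(\Delta_i+\Delta_{i+1}))$. Substituting \eqref{eq:decomp_W} for both factors splits it into four sums. The leading one, $\sum_iC_iW_{i,0}W_{i+1,0}$, is a martingale-difference sum for $\mathcal{F}_{i+1}=\sigma(y_1,\dots,y_{i+1})$ (as $W_{i+1,0}$ is centered and independent of $\mathcal{F}_i$), with per-term variance $C_i^2\,\E[W_{i,0}^2]\E[W_{i+1,0}^2]=O\big(\Delta_i\Delta_{i+1}/(\Delta_i+\Delta_{i+1})^2\big)=O(1)$; since $\sum_i\E[d_i^2]/i^2<\infty$, the martingale strong law of large numbers gives $o(n)$ a.s. The two mixed sums $\sum_iC_i(\cdots)W_{i,0}y_i$ and $\sum_iC_i(\cdots)y_{i-1}W_{i+1,0}$ are handled as in Lemma~\ref{lem:lem4}: each carries a factor $\Delta_i\Delta_{i+1}/(\Delta_i+\Delta_{i+1})\le\Delta_i$, so their means are $O(1)$ by \eqref{ass:delta_sum}, while their centered parts split into an $L^2$-bounded martingale and an independent sum of terms $\propto(\overline W_{i,0}^2-1)$ with summable variances $O(\Delta_i^2)$, both $O(1)$ a.s.\ (Kolmogorov's and the martingale strong laws); the remaining pure-$y$ sum has coefficient $O(\Delta_i)$ and is $O(1)$ a.s.

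Finally, the two terms in \eqref{eq:rv_1} and the two leftover diagonal boundary terms are each dominated, uniformly over $J$, by a constant times $\overline W_{2,0}^2$, $\overline W_{n,0}^2$ or $y_1^2$ (after writing $y_1-e^{-\theta\Delta_2}y_2=(1-e^{-2\theta\Delta_2})y_1-e^{-\theta\Delta_2}W_2$ and using Lemma~\ref{lem:lem1}(i) for the variance ratios); a single standardized Gaussian square is $o(n)$ a.s.\ by Borel--Cantelli, so these too go into $\bar\delta_n$. Uniformity over the compact set $J$ is obtained throughout from the $C^1$ dependence of the coefficients on $(\theta,\sigma^2)$, with derivatives bounded uniformly in $(i,n)$ by Lemma~\ref{lem:lem5_bis}, so that $\sup_J$ of each sum is controlled by its value at a fixed parameter plus a gradient sum of the same form. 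I expect the off-diagonal cross term to be the main obstacle: it has no counterpart in the maximum-likelihood proof, and converting in-probability bounds into the almost-sure, uniform-in-$(\theta,\sigma^2)$ $o(n)$ statements needed here is exactly where the martingale and second-moment arguments must be applied with care.
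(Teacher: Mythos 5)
Your decomposition is exactly the paper's: expand the square in \eqref{eq:Sn}, reindex $j=i+1$ to produce $\sum_{i=3}^{n-1}\alpha_i^2B_iW_i^2$ plus the boundary terms $T_2$, $T_n$ and the off-diagonal term $-2\sum_{i=2}^{n-1}C_iW_iW_{i+1}$, then pass from $W_i^2$ to $W_{i,0}^2$ via \eqref{eq:from:W:to:Wzero}. Your treatment of the boundary terms, of the pure-$y$ and mixed terms (the paper's $\Sigma_2$ and $\Sigma_3$), and of the two mixed parts of $\Sigma_4$ is in substance the paper's as well: uniformly bounded coefficients, the orthogonality of the innovations, \eqref{ass:delta_sum} and \eqref{ass:y_bounded}, and second-moment or Cauchy--Schwarz bounds upgraded to almost-sure $o(n)$ statements. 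Wherever the total variance of a sum is $O(1)$, your Chebyshev-plus-Borel--Cantelli principle is sound even for a triangular array (modulo the uniformity-in-$(\theta,\sigma^2)$ issue, which you flag and address by a gradient argument; the paper instead always reduces to parameter-free random sums with uniformly bounded deterministic coefficients, which is cleaner but equivalent in effect).

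The genuine gap is in the one place where that principle cannot work, and you correctly identify it as the crux: the leading off-diagonal sum $\sum_{i=2}^{n-1}C_iW_{i,0}W_{i+1,0}$ has variance of order $n$, not $O(1)$. You propose to conclude by ``the martingale strong law of large numbers'' from $\sum_i\E[d_i^2]/i^2<\infty$. That theorem applies to a single martingale difference sequence adapted to a single filtration; here the summands $d_{i,n}=C_iW_{i,0}W_{i+1,0}$ form a triangular array whose rows change entirely with $n$, because the designs $\{s_1,\dots,s_n\}$ are not assumed nested (the paper is explicit on this point, following \cite{Yin1991}). For an array, an almost-sure conclusion must come from tail probabilities that are summable over $n$, and Chebyshev applied to each row only gives $\P(|S_n|>\varepsilon n)\le K/(\varepsilon^2 n)$, which is not summable; so the step fails as written. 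The paper resolves exactly this difficulty by splitting the sum over odd and even $i$, so that within each row the summands are independent, and then invoking the strong law for arrays of rowwise independent random variables (Theorem 2.1 in \cite{hu97strong} with $a_n=n$). Your martingale structure can be salvaged, but only with an array-compatible tool: for instance a Rosenthal/Burkholder inequality for martingale differences gives $\E[S_n^4]\le Kn^2$, hence $\P(|S_n|>\varepsilon n)\le K/(\varepsilon^4 n^2)$, which is summable and yields the required $o(n)$ a.s. As stated, the citation of the sequential martingale strong law is the missing ingredient, and it is precisely the ingredient for which the paper needed a result from outside Ying's toolbox.
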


\noindent\\
As a consequence, we find that,
\begin{align} \label{eq:diff:lik:theta:theta:zero}
S_n( \theta,\sigma^2)-S_n(\widetilde{\theta},\widetilde{\sigma}^2)=& n \log \frac{\sigma^2}{\widetilde{\sigma^2}}+ \log\frac{ 1-e^{-2 \theta \Delta_2} }{ 1-e^{-2 \widetilde{\theta} \Delta_2}} +  \log\frac{1-e^{-2 \theta \Delta_n}}{1-e^{-2 \widetilde{\theta} \Delta_n}} 
 -\sum_{i=2}^{n-1}\log\left(A_i/\widetilde A_i\right) \nonumber \\
& + \sum_{i=3}^{n-1} \left(\alpha_i ^2B_i-\widetilde{\alpha}_i ^2\widetilde{B}_i\right) W_{i,0}^2+ \bar{\delta}_{n} 
\end{align}
where $\widetilde{\alpha}_i$, $\widetilde{B}_i$ and $\widetilde{A}_i$ are the analogs of $\alpha_i$, $B_i$ and $A_i$ defined in Equations \eqref{def:alpha}, \eqref{def:A} and \eqref{def:B} with $\theta=\widetilde \theta$ and $\sigma^2=\widetilde \sigma^2$. More precisely, they are naturally defined by $\widetilde{\alpha}_i= (\widetilde{\sigma}^2(1-e^{- 2 \widetilde{\theta} \Delta_i}))^{-1}$, $\widetilde{A}_i= \widetilde{\alpha}_i + \widetilde{\alpha}_{i+1} e^{-\widetilde{\theta} \Delta_{i+1}}$ and $\widetilde{B}_i=\left[\frac{1}{\widetilde{A}_i}+\frac{e^{-2\widetilde{\theta} \Delta_{i}}}{\widetilde{A}_{i-1}}\right]$.\\

Using the fact that $(\widetilde{\theta},\widetilde{\sigma}^2)$ has been chosen such as $\widetilde{\theta}\widetilde{\sigma}^2=\theta_0\sigma_0^2$ and making some more computations, we get the following lemma (see the proof in Section \ref{ssec:proof_lem_cons}).

\begin{lemma}\label{lem6}
Uniformly in $(\theta,\sigma^2)\in J$, a.s.
\begin{align*}
S_n( \theta,\sigma^2)-S_n(\widetilde{\theta},\widetilde{\sigma}^2)
&= n \left[\frac{\theta_0\sigma_0^2}{\theta\sigma^2}-1-\log \frac{\theta_0\sigma_0^2}{\theta\sigma^2}\right]+o(n).
\end{align*}
\end{lemma}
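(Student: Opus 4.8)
The plan is to start from the exact decomposition \eqref{eq:diff:lik:theta:theta:zero} supplied by Lemma \ref{lem5}, and to isolate in each of its pieces the unique term of exact order $n$, showing that everything else is $o(n)$ uniformly in $(\theta,\sigma^2)\in J$, almost surely. The right-hand side of \eqref{eq:diff:lik:theta:theta:zero} consists of four deterministic ``logarithmic'' contributions together with one random quadratic sum, and I would treat these two groups separately.

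For the deterministic contributions, the two boundary terms involving $\Delta_2$ and $\Delta_n$ are $O(1)$ by Lemma \ref{lem:lem1}(i), since the ratios $(1-e^{-2\theta\Delta_2})/(1-e^{-2\tilde\theta\Delta_2})$ converge to $\theta/\tilde\theta$; hence they are $o(n)$. For the sum $-\sum_{i=2}^{n-1}\log(A_i/\widetilde A_i)$, I would use the expansion \eqref{eq:DL:A}, which gives $A_i/\widetilde A_i\to \tilde\sigma^2\tilde\theta/(\sigma^2\theta)$ with an error of size $O\big(\Delta_i\Delta_{i+1}/(\Delta_i+\Delta_{i+1})\big)$ whose sum over $i$ is $O(1)$ by \eqref{ass:delta_sum}. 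Collecting these with the prefactor $n\log(\sigma^2/\tilde\sigma^2)$, the deterministic part equals $n\log\big(\theta\sigma^2/(\tilde\theta\tilde\sigma^2)\big)+o(n)$, uniformly on $J$.

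The heart of the argument is the random sum $\sum_{i=3}^{n-1}\alpha_i^2 B_i W_{i,0}^2$. Using \eqref{eq:DL:alpha:carre} for $\alpha_i^2$ and \eqref{eq:DL:A} for $A_i$ (hence for $B_i$ through its definition \eqref{def:B}), I would first establish the pointwise expansion $\alpha_i^2 B_i = q_i/(2\sigma^2\theta\Delta_i)+\mathrm{err}_{i,n}$, with $q_i$ as in \eqref{def:q} and a remainder satisfying $\sum_i |\mathrm{err}_{i,n}|\,\mathbb E[W_{i,0}^2]=O(1)$. Splitting $W_{i,0}^2=\sigma_0^2(1-e^{-2\theta_0\Delta_i})\overline W_{i,0}^2$ into mean and fluctuation, the mean part contributes $\frac{\theta_0\sigma_0^2}{\theta\sigma^2}\sum_{i=3}^{n-1}q_i+o(n)$; the telescoping identity $q_i=p_i+1-p_{i-1}$ with $p_i\defeq \Delta_{i+1}/(\Delta_i+\Delta_{i+1})\in(0,1)$ gives $\sum_{i=3}^{n-1}q_i=n+O(1)$, so this contributes $\frac{\theta_0\sigma_0^2}{\theta\sigma^2}\,n+o(n)$.

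The main obstacle is the fluctuation term $\sum_{i=3}^{n-1}\alpha_i^2 B_i\,\sigma_0^2(1-e^{-2\theta_0\Delta_i})(\overline W_{i,0}^2-1)$, which must be shown to be $o(n)$ almost surely and uniformly in $(\theta,\sigma^2)$. Here I would exploit that $\alpha_i^2 B_i=\sigma^{-2}R_i(\theta)$ factorizes the dependence on $\sigma^2$ exactly (because $\sigma^2 A_i$ and $\sigma^{-2}B_i$ depend on $\theta$ only), so the supremum over $\sigma^2$ is immediate, and the coefficients are uniformly bounded, $\sigma_0^2\sigma^{-2}R_i(\theta)(1-e^{-2\theta_0\Delta_i})=\frac{\theta_0\sigma_0^2}{\theta\sigma^2}q_i+o(1)$ uniformly in $i,n$ and on $J$. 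Since $(\overline W_{i,0}^2-1)_i$ is a fixed i.i.d. centered sequence with exponential moments, a Bernstein inequality yields $\mathbb P\big(|\sum_i b_{i,n}(\overline W_{i,0}^2-1)|>\varepsilon n\big)\le 2e^{-cn}$ for bounded weights $b_{i,n}$, which is the weighted triangular-array analogue of Lemma \ref{lem:lem3}; Borel--Cantelli then gives the conclusion at each fixed $\theta$, and uniformity over $\theta\in[a,A]$ follows from a covering argument on a grid of subexponentially growing cardinality, combined with the a.s. bound $\sup_\theta|\partial_\theta(\cdots)|\le K\sum_i|\overline W_{i,0}^2-1|=O(n)$. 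Applying the same computation at $(\tilde\theta,\tilde\sigma^2)$ gives $\frac{\theta_0\sigma_0^2}{\tilde\theta\tilde\sigma^2}n+o(n)=n+o(n)$ since $\tilde\theta\tilde\sigma^2=\theta_0\sigma_0^2$. Adding the deterministic and random contributions and using $\log\big(\theta\sigma^2/(\theta_0\sigma_0^2)\big)=-\log\big(\theta_0\sigma_0^2/(\theta\sigma^2)\big)$ produces exactly $n\big[\theta_0\sigma_0^2/(\theta\sigma^2)-1-\log(\theta_0\sigma_0^2/(\theta\sigma^2))\big]+o(n)$, as claimed.
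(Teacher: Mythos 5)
Your proposal is correct, and it reaches the result through the same skeleton as the paper — both start from \eqref{eq:diff:lik:theta:theta:zero}, dispose of the boundary logarithms via Lemma \ref{lem:lem1}(i), expand $\log(A_i/\widetilde A_i)$ to get $n\log(\theta\sigma^2/(\tilde\theta\tilde\sigma^2))+O(1)$, and Taylor-expand the coefficients of the quadratic sum so that $\alpha_i^2B_i/\alpha_{i,0}=\frac{\theta_0\sigma_0^2}{\theta\sigma^2}q_i+\delta_{i,n}(\Delta_{i-1}+\Delta_i+\Delta_{i+1})$ (the paper obtains this through \eqref{eq:for:cons:smaller:K:Delta:i}--\eqref{eq:for:cons:smaller:K:Delta:ip1}, re-pairing the double sum so each pair of weights sums to one, whereas you telescope $\sum q_i=n+O(1)$ directly; both are fine). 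The genuine difference is the probabilistic tool for the fluctuation term. The paper arranges the decomposition so that the random sums requiring almost-sure control, namely $\sum_i\frac{\Delta_{i+1}}{\Delta_i+\Delta_{i+1}}(\overline W_{i,0}^2-1)$ and its companion, have weights depending only on the design and not on $(\theta,\sigma^2)$; uniformity over $J$ is then automatic (the parameter dependence sits in bounded prefactors and in remainders dominated by $K\sup_i\overline W_{i,0}^2=o(n)$ via Lemma \ref{lem:lem3}), and the $o(n)$ a.s.\ statement is imported from the weighted strong law of Theorem 2.1 in \cite{hu97strong}. You instead keep the parameter-dependent weights, kill the $\sigma^2$-dependence by the exact factorization $\alpha_i^2B_i=\sigma^{-2}R_i(\theta)$, and handle $\theta$ by Bernstein's inequality for the sub-exponential variables $\overline W_{i,0}^2-1$ plus Borel--Cantelli at fixed $\theta$, followed by a grid-and-Lipschitz (chaining) argument. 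This is a valid, self-contained replacement for the citation to \cite{hu97strong}, at the cost of an extra verification you assert rather than prove: the Lipschitz step needs $\sup_{\theta\in[a,A]}\bigl|\partial_\theta\bigl(\alpha_i^2B_i/\alpha_{i,0}\bigr)\bigr|\leq K$ uniformly in $i,n$, which does hold but requires the derivative expansions \eqref{eq:DL:alpha:B:prime}--\eqref{eq:DL:alpha:prime:B} of Lemma \ref{lem:lem5_bis} (note $\partial_\theta(\alpha_i^2B_i)=\alpha_iD_i$ with $D_i=2\alpha_i'B_i+\alpha_iB_i'$), together with a triangular-array strong law (again Lemma \ref{lem:lem3}) to justify $\sum_i|\overline W_{i,0}^2-1|=O(n)$ a.s. In short: the paper's route buys uniformity for free by making the random weights parameter-free; yours buys independence from the external weighted-SLLN reference at the price of the covering argument.
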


Hence by Lemma \ref{lem6}, a.s.
\begin{align} \label{eq:to:finish:consistency}
\underset{\substack{( \theta,\sigma^2)\in J, \\ |\theta\sigma^2-\widetilde{\theta}\widetilde{\sigma}^2|\geq \varepsilon }}{\inf} \{S_n( \theta,\sigma^2)-S_n(\widetilde{\theta},\widetilde{\sigma}^2)\}\geq \underset{\substack{( \theta,\sigma^2)\in J, \\ |\theta\sigma^2-\widetilde{\theta}\widetilde{\sigma}^2|\geq \varepsilon }}{\inf} n\left[\frac{\theta_0\sigma_0^2}{\theta\sigma^2}-1-\log \frac{\theta_0\sigma_0^2}{\theta\sigma^2}\right] +o(n)
\end{align}
which by Lemma \ref{lem:lem2}, for every $\varepsilon>0$, is strictly positive, for $n$ large enough, a.s. Then the proof of Theorem \ref{th:consistency} is now complete.

\subsection{Proofs of the lemmas of Section \ref{ssec:proof_cons}}\label{ssec:proof_lem_cons}

\begin{proof}[Proof of Lemma \ref{lem5}]

(i) First, we study the terms in \eqref{eq:rv_1}. 
We have, from \eqref{eq:DL:alpha}
\begin{align} \label{eq:insertion:un}
\frac{\left( y_1 - e^{- \theta\Delta_2} y_2  \right)^2}{\sigma^2(1-e^{-2 \theta \Delta_2})}
& \leq K \frac{\left( y_1 - e^{- \theta\Delta_2} y_2  \right)^2}{\sigma_0^2(1-e^{-2 \theta_0 \Delta_2})} \\
& \leq K \frac{\left( y_1 - e^{- \theta_0 \Delta_2} y_2  \right)^2}{\sigma_0^2(1-e^{-2 \theta_0 \Delta_2})} 
+
K \frac{\left( e^{- \theta_0 \Delta_2} - e^{- \theta \Delta_2}   \right)^2}{\sigma_0^2(1-e^{-2 \theta_0 \Delta_2})} y_2^2 \nonumber \\
& \leq K \sup_{2 \leq i \leq n} \bar{W}_{i,0}^2 +
K \Delta_2 \sup_{t \in [0,1]} Y^2(t) \nonumber \\
&  = o(n) ~ ~ \mbox{a.s.}
 \end{align}
from Lemma \ref{lem:lem3}.
The random variable $W_{n}^2/(\sigma^2(1-e^{-2 \theta \Delta_n}))$ can be treated in the same manner leading to the same result.\\

(ii) Second, we turn to the term in \eqref{eq:Sn} that we aim at approximating by a sum of independent random variables. In this goal, we first show the relation
\begin{equation} \label{eq:from:W:to:Wzero}
W_i^2 = W_{i,0}^2 + [ e^{- \theta_0 \Delta_i} - e^{- \theta \Delta_i}]^2 y_{i-1}^2 + 2[ e^{- \theta_0 \Delta_i} - e^{- \theta \Delta_i} ] y_{i-1} W_{i,0}.
\end{equation}

 Hence, by \eqref{eq:from:W:to:Wzero}, one has 
\begin{align}
& \sum_{i=2}^{n-1} \frac{1}{A_i} \left( \alpha_i W_{i}-\alpha_{i+1} e^{-\theta \Delta_{i+1}}W_{i+1}\right)^2\nonumber\\
 = &\sum_{i=2}^{n-1} \frac{\alpha_i ^2}{A_i} W_{i}^2 + \sum_{i=2}^{n-1} \frac{\alpha_{i+1} ^2e^{-2\theta \Delta_{i+1}}}{A_i}W_{i+1}^2 - 2 \sum_{i=2}^{n-1} \frac{\alpha_i \alpha_{i+1} e^{-\theta \Delta_{i+1}}}{A_i} W_{i}W_{i+1}\nonumber\\
= &\sum_{i=3}^{n-1} \alpha_i ^2\left[\frac{1}{A_i}+\frac{e^{-2\theta \Delta_{i}}}{A_{i-1}}\right] W_{i}^2 + \frac{\alpha_2^2}{A_2}W_{2}^2 
+ \frac{\alpha_n^2e^{-2\theta\Delta_n}}{A_{n-1}}W_{n}^2 - 2 \sum_{i=2}^{n-1} C_i W_{i}W_{i+1}\nonumber\\
 = &\sum_{i=3}^{n-1} \alpha_i ^2B_i W_{i,0}^2 + \sum_{i=3}^{n-1} \alpha_i ^2B_i (e^{-\theta_0\Delta_i}-e^{-\theta\Delta_i})^2y_{i-1}^2 + 2 \sum_{i=3}^{n-1} \alpha_i ^2B_i (e^{-\theta_0\Delta_i}-e^{-\theta\Delta_i})W_{i,0}y_{i-1}\nonumber\\
& + \frac{\alpha_2^2}{A_2}W_{2}^2 + \frac{\alpha_n^2e^{-2\theta\Delta_n}}{A_{n-1}}W_{n}^2 -  2 \sum_{i=2}^{n-1} C_i W_{i}W_{i+1} \nonumber\\
\eqdef &\Sigma_1+\Sigma_2+2\Sigma_3 + T_2+T_n-2\Sigma_4, \label{eq:decomp_derniere_somme}
\end{align}

where $\alpha_i$, $A_i$, $B_i$ and $C_i$ have been defined in \eqref{def:alpha}, \eqref{def:A}, \eqref{def:B} and \eqref{def:C}.
We prove that all the previous terms are $o(n)$, uniformly in $\theta$ and $\sigma^2$ a.s, except $\Sigma_1$ that still appears in the expression of $S_n(\theta,\sigma^2)$ in Lemma \ref{lem5}.\\

$\bullet$ \textbf{Term $T_2$}:
For $n$ large enough, since $ \frac{\alpha_2 }{A_2}\leq 1$, we get
\begin{eqnarray*}
|T_2| & = & \left| \frac{\alpha_2^2}{A_2} W_{2}^2 \right| 
 \leq  \alpha_2 W_{2}^2.
\end{eqnarray*}
Hence, we can show
$\underset{( \theta,\sigma^2) \in J}{\sup} |T_2| = o(n)$ a.s. in the same way as for \eqref{eq:insertion:un}.\\

$\bullet$ \textbf{Term $T_n$}: 
For $n$ large enough, since $ \frac{\alpha_n e^{-2 \theta \Delta_n}}{A_{n-1}}\leq 1$, we get
\begin{eqnarray*}
	|T_n| & = & \left| \frac{\alpha_n^2 e^{-2 \theta \Delta_n}}{A_{n-1}} W_{n}^2 \right| 
	\leq  \alpha_n W_{n}^2.
\end{eqnarray*}
Hence, we can show
$\underset{( \theta,\sigma^2) \in J}{\sup} |T_n| = o(n)$ a.s. in the same way as for \eqref{eq:insertion:un}.\\

$\bullet$ \textbf{Term $\Sigma_2$}: The deterministic quantity
$\frac{\alpha_i}{\Delta_i}(e^{-\theta_0\Delta_i}-e^{-\theta\Delta_i})^2$ is bounded for $n$ large enough, uniformly in $(\theta,\sigma^2) \in J$ (trivial inequalities and \eqref{eq:DL:alpha}) while $\frac{\alpha_i}{A_i}\leq 1$ and $\frac{\alpha_i e^{-2\theta\Delta_i}}{A_{i-1}}\leq 1$. Then, we are led to
\begin{align*}
\sum_{i=3}^{n-1} \alpha_i ^2B_i (e^{-\theta_0\Delta_i}-e^{-\theta\Delta_i})^2y_{i-1}^2 &=\sum_{i=3}^{n-1} \Delta_i\alpha_iB_i \frac{\alpha_i}{\Delta_i}(e^{-\theta_0\Delta_i}-e^{-\theta\Delta_i})^2y_{i-1}^2\\
&\leq K \underset{t\in[0,1]}\sup Y(t)^2 \sum_{i=3}^{n-1} \Delta_i =K \underset{t\in[0,1]}\sup Y(t)^2
\end{align*}
from which, by \eqref{ass:y_bounded}, we deduce
$\underset{( \theta,\sigma^2) \in J}{\sup}\Sigma_2 = o(n)$ a.s.\\

$\bullet$ \textbf{Term $\Sigma_3$}: By the Cauchy-Schwarz inequality,
\begin{align} \label{eq:Sigma:trois}
|\Sigma_3|
& \leq \left(\sum_{i=3}^{n-1} \frac{\alpha_i ^4}{\alpha_{i,0}}B_i^2 (e^{-\theta_0\Delta_i}-e^{-\theta\Delta_i})^2\right)^{1/2} \left(\sum_{i=3}^{n-1} y_{i-1}^2 \overline W_{i,0}^2 \right)^{1/2}.
\end{align}
As already mentioned, the deterministic term
\[
\frac{\alpha_i }{\Delta_i} (e^{-\theta_0\Delta_i}-e^{-\theta\Delta_i})^2
\]
is bounded uniformly in $(\theta,\sigma^2) \in J$. Furthermore, $\alpha_i / \alpha_{i,0}$ is bounded uniformly in $(\theta,\sigma^2) \in J$ from \eqref{eq:DL:alpha}. Finally, $\alpha_i^2 B_i^2 \leq K \alpha_i^2 ( 1/\alpha_i )^2 = K$. Hence the first term on the right-hand side of \eqref{eq:Sigma:trois} is bounded uniformly in $(\theta,\sigma^2) \in J$. 
Now Lemma \ref{lem:lem3} yields that $\sum_{i=3}^{n-1}(\overline W_{i,0}^2 - 1) = o(n^{(1/2)+\alpha})$ a.s. for any $\alpha> 0$ leading to $\sum_{i=3}^{n-1}\overline W_{i,0}^2=O(n)$ a.s\, and
\[
\left(\sum_{i=3}^{n-1} y_{i-1}^2 \overline W_{i,0}^2\right)^{1/2}\leq \left(\underset{t\in [0,1]}\sup Y^2(t)\sum_{i=3}^{n-1} \overline W_{i,0}^2\right)^{1/2} =O(n^{1/2})   \ a.s.
\]
As a consequence, $ \underset{( \theta,\sigma^2) \in J}{\sup} | \Sigma_3 | =O(n^{1/2})$ a.s.\ and naturally $\underset{( \theta,\sigma^2) \in J}{\sup} |\Sigma_3 | =o(n)$ a.s.\\

$\bullet$ \textbf{Term $\Sigma_4$}:  Using the trivial equality $ab=ab-a_0b_0+a_0b_0=a_0b_0+a_0(b-b_0)+b(a-a_0)$, one gets 
\begin{align}
& W_{i}W_{i+1}  =  [y_i-e^{- \theta \Delta_i} y_{i-1}][y_{i+1}-e^{-\theta \Delta_{i+1}} y_i] =  [y_i-e^{- \theta_0 \Delta_i} y_{i-1}][y_{i+1}-e^{- \theta_0 \Delta_{i+1}} y_{i}]\nonumber \\
& + [y_i-e^{- \theta_0 \Delta_i} y_{i-1}](e^{- \theta_0 \Delta_{i+1}} -e^{-\theta \Delta_{i+1}} )y_i  +[y_{i+1}-e^{-\theta \Delta_{i+1}} y_i] (e^{ -\theta_0 \Delta_{i}} -e^{-\theta \Delta_{i}} )y_{i-1}\nonumber\\
& = W_{i,0}W_{i+1,0} + (e^{- \theta_0 \Delta_{i+1}} -e^{-\theta \Delta_{i+1}} ) W_{i,0} y_i 
 + (e^{ -\theta_0 \Delta_{i}} -e^{-\theta \Delta_{i}} )W_{i+1} y_{i-1}.\label{eq:decomp_W}
\end{align}
Thus $\Sigma_4$ rewrites

\begin{equation} \label{eq:Sigma:quatre}
\sum_{i=2}^{n-1} C_i W_{i,0} W_{i+1,0}
+ \sum_{i=2}^{n-1} C_i(e^{- \theta_0 \Delta_{i+1}} -e^{-\theta \Delta_{i+1}} )W_{i,0} y_i
+\sum_{i=2}^{n-1} C_i(e^{ -\theta_0 \Delta_{i}} -e^{-\theta \Delta_{i}} )W_{i+1}y_{i-1}.
\end{equation}

We can show that
\[
\frac{C_i}{\alpha_{i,0}^{1/2} \alpha_{i+1,0}^{1/2}}
= 
\frac{\sigma_0^2 \theta_0}{\sigma^2 \theta^2}
\frac{\sqrt{ \Delta_i \Delta_{i+1} }}{\Delta_i + \Delta_{i+1}}
+ \delta_{i,n}
( \Delta_i + \Delta_{i+1} ).
\]
Hence the first random variable of \eqref{eq:Sigma:quatre} rewrites
\begin{align*}
& \underset{(\theta,\sigma^2) \in J}\sup
\left|
\sum_{i=2}^{n-1} C_i W_{i,0} W_{i+1,0}
\right|
 = \underset{(\theta,\sigma^2) \in J}\sup
\left|
\sum_{i=2}^{n-1} \frac{C_i}{\alpha_{i,0}^{1/2} \alpha_{i+1,0}^{1/2}} \overline W_{i,0} \overline W_{i+1,0}
\right|\\
& \leq 
\underset{(\theta,\sigma^2) \in J}\sup
\left|
\sum_{i=2}^{n-1} \frac{\sigma_0^2 \theta_0}{\sigma^2 \theta^2}
\frac{\sqrt{ \Delta_i \Delta_{i+1} }}{\Delta_i + \Delta_{i+1}} \overline W_{i,0} \overline W_{i+1,0}
\right|
 + \sum_{i=2}^{n-1} \delta_{i,n} ( \Delta_i + \Delta_{i+1} ) |\overline W_{i,0} \overline W_{i+1,0}| \\
& \leq K \left| \sum_{i=2}^{n-1} 
\frac{\sqrt{ \Delta_i \Delta_{i+1} }}{\Delta_i + \Delta_{i+1}} \overline W_{i,0} \overline W_{i+1,0} \right|
+\sum_{i=2}^{n-1} \delta_{i,n} ( \Delta_i + \Delta_{i+1} ) |\overline W_{i,0} \overline W_{i+1,0}| \\
& = K \left| \sum_{i=2}^{n-1} 
\frac{\sqrt{ \Delta_i \Delta_{i+1} }}{\Delta_i + \Delta_{i+1}} \overline W_{i,0} \overline W_{i+1,0} \right|
+ \bar{\delta}_n,
\end{align*}
from \eqref{eq:lem31}.
Now, we have 
\begin{align*}
& \left| \sum_{i=2}^{n-1} 
\frac{\sqrt{ \Delta_i \Delta_{i+1} }}{\Delta_i + \Delta_{i+1}} \overline W_{i,0} \overline W_{i+1,0} \right| \\
& \leq 
\left| \sum_{\substack{i=2,...,n-1\\ \mbox{$i$ odd}}} 
\frac{\sqrt{ \Delta_i \Delta_{i+1} }}{\Delta_i + \Delta_{i+1}} \overline W_{i,0} \overline W_{i+1,0} \right|
+
\left| \sum_{\substack{i=2,...,n-1\\ \mbox{$i$ even}}} 
\frac{\sqrt{ \Delta_i \Delta_{i+1} }}{\Delta_i + \Delta_{i+1}} \overline W_{i,0} \overline W_{i+1,0} \right|.
\end{align*}
In each of the two sums above, the summands constitute two triangular arrays of independent random variables. Thus, applying Theorem 2.1 in \cite{hu97strong} with $a_n = n$, each of the two sums is a $o(n)$ a.s.
Hence finally $ \underset{(\theta,\sigma^2) \in J}\sup \left| \sum_{i=2}^{n-1} C_i W_{i,0} W_{i+1,0} \right| = o(n)$ a.s.

Let us now address the second term in \eqref{eq:Sigma:quatre} that by the Cauchy-Schwarz inequality is bounded from above by
\begin{align*}
 \left(\sum_{i=2}^{n-1} \frac{C_i^2}{\alpha_{i,0}} (e^{- \theta_0 \Delta_{i+1}} -e^{-\theta \Delta_{i+1}} )^2 \right)^{1/2} 
\left(\sum_{i=2}^{n-1} y_i^2 \overline W_{i,0}^2 \right)^{1/2}
= o(n) \quad \textrm{a.s.}
\end{align*}
where the last equality comes from similar computations as from the term $\Sigma_3$ above, and from the fact that
\[
\sup_{n \in \mathbb{N}} \sup_{2 \leq i \leq n-1} 
\sup_{(\theta,\sigma^2) \in J} 
\left|
\frac{C_i^2 }{ \alpha_{i,0} \Delta_{i+1} } ( e^{- \theta_0 \Delta_{i+1}} - e^{- \theta \Delta_{i+1} }  )^2
\right|
\leq K.
\]

 The third term in the right-hand side of \eqref{eq:Sigma:quatre} yields  
\begin{align} \label{eq:insertion:trois}
& \sup_{(\theta , \sigma^2) \in J} \left| \sum_{i=2}^{n-1} C_i(e^{ -\theta_0 \Delta_{i}} -e^{-\theta \Delta_{i}} )W_{i+1}y_{i-1} \right |  \leq \nonumber  \\
&  \sup_{(\theta , \sigma^2) \in J} \left| \sum_{i=2}^{n-1} C_i(e^{ -\theta_0 \Delta_{i}} -e^{-\theta \Delta_{i}} )W_{i+1,0}y_{i-1} \right |
+
\sup_{(\theta , \sigma^2) \in J} \left| \sum_{i=2}^{n-1} C_i(e^{ -\theta_0 \Delta_{i}} -e^{-\theta \Delta_{i}} )^2y_i y_{i-1} \right |.
\end{align}
Since trivially $C_i(e^{ -\theta_0 \Delta_{i}} -e^{-\theta \Delta_{i}} ) = \delta_{i,n}$, the second term in \eqref{eq:insertion:trois} is bounded by $K \sum_{i=2}^{n-1} \Delta_i |y_i y_{i-1}| = O(1)$ a.s.

The first term in \eqref{eq:insertion:trois} is bounded by 
\begin{align*}
K \sum_{i=2}^{n-1} | W_{i+1,0} y_{i-1} |
& \leq K \sup_{t \in [0,1]} |Y(t)| \sum_{i=2}^{n-1} | \bar{W}_{i+1,0} | \alpha_{i,0}^{-1/2}  \\
& \leq K \sup_{t \in [0,1]} |Y(t)| 
\sqrt{ \sum_{i=2}^{n-1}  \bar{W}_{i+1,0}^2  }
\sqrt{ \sum_{i=2}^{n-1}  \alpha_{i,0}^{-1} } \\
& = O( \sqrt{n} ) ~ ~ \mbox{a.s.}
\end{align*}
since $\sum_{i=2}^{n-1}  \bar{W}_{i+1,0}^2 = O(n)$ a.s. has been shown when handling $\Sigma_3$ after \eqref{eq:Sigma:trois}. 
 
One may conclude that $\underset{( \theta,\sigma^2) \in J} \sup \Sigma_4=o(n)$ a.s. The proof of Lemma \ref{lem5} is then complete.
\end{proof}

\begin{proof}[Proof of Lemma \ref{lem6}]

We address each of the terms in \eqref{eq:diff:lik:theta:theta:zero}.

$\bullet$ Using Lemma \ref{lem:lem1}(i), we get that
\[
\sup_{\theta \in [a, A]} \left| \log\frac{ 1-e^{-2 \theta \Delta_2} }{ 1-e^{-2 \widetilde{\theta} \Delta_2}} \right| = O(1).
\]
In the same way, $\sup_{\theta \in[a, A]} \left| \log\frac{1-e^{-2 \theta \Delta_n}}{1-e^{-2 \widetilde{\theta} \Delta_n}} \right| =O(1)$.\\

$\bullet$ We have, using Lemmas \ref{lem:lem1} (i) and (ii),

\begin{align*}
\log(A_i/\widetilde{A}_i)
& = \log \left[ \left(\frac{1}{1-e^{- 2\theta \Delta_i}}+\frac{e^{-2\theta \Delta_{i+1}}}{1-e^{- 2 \theta \Delta_{i+1}}}\right)/\left(\frac{1}{1-e^{- 2\widetilde{\theta} \Delta_i}}+\frac{e^{-2\widetilde{\theta} \Delta_{i+1}}}{1-e^{- 2 \widetilde{\theta} \Delta_{i+1}}}\right) \right] \\
& = \log\frac{1-e^{- 2\widetilde{\theta} \Delta_i}}{1-e^{- 2\theta \Delta_i}}
+\log \left[ \left(1+e^{-2\theta \Delta_{i+1}}\frac{1-e^{- 2\theta \Delta_i}}{1-e^{- 2 \theta \Delta_{i+1}}}\right)/ \left(1+e^{-2\widetilde{\theta} \Delta_{i+1}}\frac{1-e^{- 2\widetilde{\theta} \Delta_i}}{1-e^{- 2 \widetilde{\theta}\Delta_{i+1}}}\right) \right] \\
& = \log \frac{\widetilde\theta}{\theta}  + \delta_{i,n} (\Delta_i + \Delta_{i+1}).
\end{align*}
Thus, by summation we have,
\[
\underset{\theta \in [a,A]}\sup \left|
\sum_{i=2}^{n-1} \log  \frac{A_i}{\tilde{A}_i}  - n \log  \frac{\widetilde\theta}{\theta} \right| =  O(1) = o(n). 
\]

$\bullet$ We want to show that 
\[
\underset{(\theta,\sigma^2) \in J}\sup \left| \sum_{i=3}^{n-1} \alpha_i ^2B_i W_{i,0}^2 - n \frac{\theta_0\sigma_0^2}{\theta\sigma^2} \right| = o(n) ~ ~ \mbox{a.s}.
\]
By \eqref{def:B}, one has
\begin{align*}
&\sum_{i=3}^{n-1} \alpha_i ^2B_iW_{i,0}^2
=\sum_{i=3}^{n-1} \frac{\alpha_i ^2}{\alpha_{i,0}}\frac{1}{A_i}\overline W_{i,0}^2+\sum_{i=3}^{n-1} \frac{\alpha_i ^2}{\alpha_{i,0}}\frac{e^{-2\theta \Delta_{i}}}{A_{i-1}}\overline W_{i,0}^2\\
=&\sum_{i=3}^{n-2} \left(\frac{\alpha_i ^2}{\alpha_{i,0}}\frac{1}{A_i}\overline W_{i,0}^2+\frac{\alpha_{i+1} ^2}{\alpha_{i+1,0}}\frac{e^{-2\theta \Delta_{i+1}}}{A_i}\overline W_{i+1,0}^2\right)
+\frac{\alpha_{3} ^2e^{-2\theta \Delta_{3}}}{A_2} W_{3,0}^2
+ \frac{\alpha_{n-1} ^2 }{A_{n-1}}W_{n-1,0}^2.\\
\end{align*}
Then we use \eqref{eq:DL:alpha} to develop $\alpha_i/\alpha_{i,0}$ (respectively $\alpha_{i+1}/\alpha_{i+1,0}$) and Lemma \ref{lem:lem1} (ii) to develop $\alpha_i/A_i$ (respectively $\alpha_{i+1}e^{-2\theta \Delta_{i+1}}/A_i$). We get
\begin{equation}  \label{eq:for:cons:smaller:K:Delta:i}
  \frac{\alpha_i ^2}{\alpha_{i,0}}\frac{1}{A_i} 
  =
   \frac{\theta_0\sigma_0^2}{\theta\sigma^2}  \frac{\Delta_{i+1}}{\Delta_i+\Delta_{i+1}} 
+ \delta_{i,n} (\Delta_i  + \Delta_{i+1})
 \end{equation}
 \begin{equation}  \label{eq:for:cons:smaller:K:Delta:ip1}
 \frac{\alpha_{i+1} ^2}{\alpha_{i+1,0}}\frac{e^{-2\theta \Delta_{i+1}}}{A_i} 
 =
  \frac{\theta_0\sigma_0^2}{\theta\sigma^2}  \frac{\Delta_{i}}{\Delta_i+\Delta_{i+1}} 
+ \delta_{i,n} (\Delta_i  + \Delta_{i+1}).
 \end{equation}
In addition, we easily show, as in \eqref{eq:insertion:un}, that $\underset{(\theta,\sigma^2) \in J}\sup \left| \frac{\alpha_{3} ^2e^{-2\theta \Delta_{3}}}{A_2} W_{3,0}^2 \right| = o(n)$ a.s. and that
$\underset{(\theta,\sigma^2) \in J}\sup \linebreak[1] \left|  \frac{\alpha_{n-1} ^2 }{A_{n-1}}W_{n-1,0}^2 \right| = o(n)$ a.s.
Then,
\begin{align} \label{eq:for:lemma:six:p:sept}
 & \left| \sum_{i=3}^{n-1} \alpha_i ^2 B_i  W_{i,0}^2 -n \frac{\theta_0\sigma_0^2}{\theta\sigma^2} \right|
= 
\left| \sum_{i=3}^{n-2}
\left(
\frac{\alpha_i ^2}{\alpha_{i,0}}\frac{1}{A_i} \overline W_{i,0}^2 + 
\frac{\alpha_{i+1} ^2}{\alpha_{i+1,0}}\frac{e^{-2\theta \Delta_{i+1}}}{A_i} \overline W_{i+1,0}^2
\right)
 -\sum_{i=3}^{n-2} \frac{\theta_0\sigma_0^2}{\theta\sigma^2}
 + \bar{\delta}_{n}  \right| \nonumber
 \\
 = & 
\left|
\sum_{i=3}^{n-2}
\left(
\frac{\alpha_i ^2}{\alpha_{i,0}}\frac{1}{A_i} \overline W_{i,0}^2+ 
\frac{\alpha_{i+1} ^2}{\alpha_{i+1,0}}\frac{e^{-2\theta \Delta_{i+1}}}{A_i} \overline W_{i+1,0}^2
\right)
 -\frac{\theta_0\sigma_0^2}{\theta\sigma^2} \sum_{i=3}^{n-2} \left(\frac{\Delta_{i+1}}{\Delta_i+\Delta_{i+1}}+\frac{\Delta_{i}}{\Delta_i+\Delta_{i+1}}\right)
  + \bar{\delta}_{n}  \right| \nonumber \\
  \leq & \sum_{i=3}^{n-2}
\left|
\frac{\alpha_i ^2}{\alpha_{i,0}}\frac{1}{A_i} 
-
\frac{\theta_0 \sigma_0^2}{\theta^2 \sigma^2}
\frac{\Delta_{i+1}}{\Delta_i+\Delta_{i+1}}
\right|
\overline  W_{i,0}^2
+ 
\sum_{i=3}^{n-2}
\left|
\frac{\alpha_{i+1} ^2}{\alpha_{i+1,0}}\frac{e^{-2\theta \Delta_{i+1}}}{A_i}
-
\frac{\theta_0 \sigma_0^2}{\theta^2 \sigma^2}
\frac{\Delta_{i}}{\Delta_i+\Delta_{i+1}}
\right|
 \overline W_{i+1,0}^2 \nonumber \\
&
+ \frac{\theta_0\sigma_0^2}{\theta\sigma^2} \left|  \sum_{i=3}^{n-2}
\frac{\Delta_{i+1}}{\Delta_i+\Delta_{i+1}}
(  \overline W_{i,0}^2 - 1  ) \right|
+ \frac{\theta_0\sigma_0^2}{\theta\sigma^2} \left|  \sum_{i=3}^{n-2}
\frac{\Delta_{i}}{\Delta_i+\Delta_{i+1}}
(  \overline W_{i+1,0}^2 - 1  ) \right|
  +  | \bar{\delta}_{n} | 
\nonumber  \\
 \leq & K \sum_{i=3}^{n-2} ( \Delta_{i} + \Delta_{i+1}) \overline  W_{i,0}^2
 +
K \sum_{i=3}^{n-2} ( \Delta_{i} + \Delta_{i+1}) \overline  W_{i+1,0}^2
 +  | \bar{\delta}_{n} |  \\
&
+ \frac{\theta_0\sigma_0^2}{\theta\sigma^2} \left|  \sum_{i=3}^{n-2}
\frac{\Delta_{i+1}}{\Delta_i+\Delta_{i+1}}
(  \overline W_{i,0}^2 - 1  ) \right|
+ \frac{\theta_0\sigma_0^2}{\theta\sigma^2} \left|  \sum_{i=3}^{n-2}
\frac{\Delta_{i}}{\Delta_i+\Delta_{i+1}}
(  \overline W_{i+1,0}^2 - 1  ) \right|. \nonumber
\end{align}
Let us show that the terms in the right-hand side of \eqref{eq:for:lemma:six:p:sept} are a.s. $o(n)$. We have
\[
\sum_{i=3}^{n-2} ( \Delta_{i} + \Delta_{i+1}) \overline  W_{i,0}^2
\leq
K \sup_{i=3,...,n-2} \overline  W_{i,0}^2
\leq 1 + \sup_{i=3,...,n-2}  | \overline{W }_{i,0}^2 - 1| 
= o(n),
\]
a.s. from Lemma \ref{lem:lem3}. Similarly
\[
\sum_{i=3}^{n-2} ( \Delta_{i} + \Delta_{i+1}) \overline  W_{i+1,0}^2
= o(n)
\]
a.s. Also, using theorem 2.1 in \cite{hu97strong} with $a_n = n$, we have
\[
\sum_{i=3}^{n-2}
\frac{\Delta_{i+1}}{\Delta_i+\Delta_{i+1}}
(  \overline W_{i,0}^2 - 1  ) = o(n)
\] 
a.s. and
\[
\sum_{i=3}^{n-2}
\frac{\Delta_{i}}{\Delta_i+\Delta_{i+1}}
(  \overline W_{i+1,0}^2 - 1  ) = o(n)
\]
a.s. Hence finally 
\[
\underset{(\theta,\sigma^2) \in J}\sup \left| \sum_{i=3}^{n-1} \alpha_i ^2B_i W_{i,0}^2 - n \frac{\theta_0\sigma_0^2}{\theta\sigma^2} \right| = o(n) ~ ~ \mbox{a.s.}
\]

$\bullet$ We can now conclude the proof. We have
\begin{align*}
S_n(\theta,\sigma^2) - S_n(\tilde{\theta},\tilde{\sigma}^2) 
& = 
n \log \frac{\sigma^2}{\tilde{\sigma}^2}
- n \log \frac{\tilde{\theta}}{\theta} 
+ n \frac{\theta_0 \sigma_0^2}{\theta \sigma^2} 
- n \frac{\theta_0 \sigma_0^2}{\tilde{\theta} \tilde{\sigma^2}} +  \delta_{n}
& = n \left(  \log \frac{\sigma^2 \theta}{\sigma_0^2 \theta_0 } + \frac{\theta_0 \sigma_0^2}{\theta \sigma^2}  
-1  \right) +  \delta_{n},
\end{align*}
by reminding that $\tilde{\theta} \tilde{\sigma^2} = \theta_0 \sigma_0^2$. The proof of Lemma \ref{lem6} is thus complete.
\end{proof}

\subsection{Proof of Theorem \ref{th:asymptotic}}\label{ssec:proof_AN}

Let us first prove \eqref{eq:AN}
in the case $aB < \theta_0 \sigma_0^2$; $Ab > \theta_0 \sigma_0^2$. We shall then discuss the other cases at the end. In that view, let
\begin{align}\label{eq:partial}
\psi ( \theta,\sigma^2)=\frac{\partial}{\partial \theta}S_n( \theta,\sigma^2).
\end{align}
Then from Theorem \ref{th:consistency}, a.s. for $n$ large enough, $\hat{\theta} \in (a,A)$. Thus a.s. for $n$ large enough
$(\hat\theta,\hat\sigma^2)$ satisfies $\psi (\hat\theta,\hat\sigma^2)= 0$. We shall approximate $\psi ( \theta,\sigma^2)$ uniformly in $( \theta,\sigma^2) \in J$.  

Starting from \eqref{eq:expression:Sn:un}, \eqref{eq:rv_1}, \eqref{eq:Sn} and \eqref{eq:decomp_derniere_somme} we can write
\begin{align*}
\psi ( \theta,\sigma^2)=
&\frac{2\Delta_2e^{-2 \theta \Delta_2} }{1-e^{-2 \theta \Delta_2}} + \frac{2\Delta_ne^{-2 \theta \Delta_n} }{1-e^{-2 \theta 
\Delta_n}} +\frac{2\Delta_2e^{- \theta \Delta_2} y_2(y_1-e^{-\theta \Delta_2}y_2)}{\sigma^2(1-e^{-2 \theta \Delta_2})}\\
& - \frac{2\Delta_2e^{-2 \theta \Delta_2} (y_1-e^{-\theta \Delta_2}y_2)^2}{\sigma^2(1-e^{-2 \theta \Delta_2})^2}
 +\frac{2\Delta_ne^{- \theta \Delta_n} y_{n-1}W_{n}}{\sigma^2(1-e^{-2 \theta \Delta_n})}
-  \frac{2\Delta_ne^{-2 \theta \Delta_n}W_{n}^2}{\sigma^2(1-e^{-2 \theta \Delta_n})^2}\\
& - \sum_{i=2}^{n-1}\frac{A'_i}{A_i} + \Sigma'_1 +\Sigma'_2 + 2 \Sigma'_3 + T'_2 +T'_n - 2\Sigma'_4
\end{align*}
where

$\circ$ $\Sigma'_1 = \frac{\partial}{\partial \theta}\Sigma_1= \displaystyle\sum_{i=3}^{n-1}\alpha_iD_i W_{i,0}^2$;

$\circ$ $\Sigma'_2 = \frac{\partial}{\partial \theta}\Sigma_2= \displaystyle\sum_{i=3}^{n-1}\alpha_i \left[D_i(e^{- \theta_0 \Delta_i}-e^{-\theta \Delta_i})  + 2 \alpha_iB_i\Delta_ie^{-\theta \Delta_i} \right]y^2_{i-1}(e^{- \theta_0 \Delta_i}-e^{-\theta \Delta_i}) $;

$\circ$ $\Sigma'_3 = \frac{\partial}{\partial \theta}\Sigma_3= \displaystyle\sum_{i=3}^{n-1}\alpha_i \left[D_i(e^{- \theta_0 \Delta_i}-e^{-\theta \Delta_i})  + \alpha_iB_i\Delta_ie^{-\theta \Delta_i} \right]W_{i,0} y_{i-1}$;

$\circ$ $T'_2 = \frac{\partial}{\partial \theta} T_2= 2\alpha'_2\alpha_2 \displaystyle\frac{W_2^2}{A_2} + \alpha^2_2\left(\frac{2 \Delta_2e^{- \theta \Delta_2}y_1 W_2}{A_2} -  W^2_2\frac{A'_2}{A_2^2} \right)$;

$\circ $ $T'_n = \frac{\partial}{\partial \theta} T_n=  \displaystyle\frac{\alpha_n e^{- 2\theta \Delta_n}}{A_{n-1}}\left[(2\alpha'_n-2\alpha_n\Delta_n-\frac{A'_{n-1}}{A_{n-1}}\alpha_n) W_{n} +   2\alpha_n\Delta_ne^{- \theta \Delta_n}y_{n-1} \right]W_{n}$;\\

$\circ$ $\Sigma'_4 = \frac{\partial}{\partial \theta}\Sigma_4= \displaystyle\sum_{i=2}^{n-1}C'_iW_{i}W_{i+1} +\displaystyle\sum_{i=2}^{n-1}C_i\Delta_ie^{-\theta \Delta_{i}}W_{i+1}y_{i-1}+ \displaystyle\sum_{i=2}^{n-1}C_i\Delta_{i+1}e^{-\theta \Delta_{i+1}}W_{i}y_{i}$

where $C'_i$ is the derivative of $C_i$ w.r.t. $\theta$ defined in \eqref{def:C:prime} and
\begin{align}\label{def:D}
D_i\defeq 2\alpha'_iB_i+ \alpha_i B'_i,\ \quad \text{for $i=3,\ldots,n-1$}.
\end{align}

First, we consider the terms $\Sigma'_1$ and  $\Sigma'_4$ in the following lemma (proved in Section \ref{ssec:proof_lem_AN}).
 
\begin{lemma}\label{lem:sigma_14} We have
\begin{align*}
\Sigma'_1 &= \frac{\theta_0\sigma_0^2}{\theta^2\sigma^2}\sum_{i=3}^{n-1}  q_i \overline W_{i,0}^2 + \delta_n
\end{align*}
and
\begin{align*}
\Sigma'_4 &= \frac{\theta_0\sigma_0^2}{\theta^2\sigma^2} \sum_{i=3}^{n-1} \frac{\sqrt{\Delta_i\Delta_{i+1}}}{\Delta_i+\Delta_{i+1}} \overline W_{i,0} \overline W_{i+1,0}+ \delta_n,
\end{align*}
where $q_i$ and $C'_i$ have been defined in \eqref{def:q} and \eqref{def:C:prime}.
\end{lemma}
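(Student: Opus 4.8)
The plan is to treat $\Sigma'_1$ and $\Sigma'_4$ separately, in both cases reducing to the normalized variables $\overline W_{i,0}$ via $W_{i,0} = \overline W_{i,0}/\sqrt{\alpha_{i,0}}$ and then replacing every coefficient by its leading order using the Taylor expansions of Lemma~\ref{lem:lem5_bis}. The guiding principle is that each coefficient splits as an explicit main part plus a remainder of the form $\delta_{i,n}(\Delta_{i-1}+\Delta_i+\Delta_{i+1})$, after which summing the remainder against $\overline W_{i,0}^2$ or $\overline W_{i,0}\overline W_{i+1,0}$ produces an $O_{\P}(1)$ contribution, uniformly in $(\theta,\sigma^2)\in J$, to be absorbed in $\delta_n$.

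For $\Sigma'_1 = \sum_{i=3}^{n-1}\alpha_i D_i W_{i,0}^2$, I would first write $W_{i,0}^2 = \alpha_{i,0}^{-1}\overline W_{i,0}^2$, so that $\Sigma'_1 = \sum_{i=3}^{n-1}(\alpha_i D_i/\alpha_{i,0})\overline W_{i,0}^2$. Recalling $D_i = 2\alpha'_i B_i + \alpha_i B'_i$, the expansions \eqref{eq:DL:alpha:prime:B} and \eqref{eq:DL:alpha:B:prime} give $D_i = \theta^{-1}q_i + \delta_{i,n}(\Delta_{i-1}+\Delta_i+\Delta_{i+1})$, while \eqref{eq:DL:alpha} yields $\alpha_i/\alpha_{i,0} = (\sigma_0^2\theta_0)/(\sigma^2\theta) + \delta_{i,n}\Delta_i$. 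Multiplying and using $q_i = O(1)$, I obtain $\alpha_i D_i/\alpha_{i,0} = (\theta_0\sigma_0^2)/(\theta^2\sigma^2)\,q_i + \delta_{i,n}(\Delta_{i-1}+\Delta_i+\Delta_{i+1})$. The remainder sum is bounded uniformly in $(\theta,\sigma^2)$ by $K\sum_{i}(\Delta_{i-1}+\Delta_i+\Delta_{i+1})\overline W_{i,0}^2$, whose expectation is at most $3K\sum_i\Delta_i = O(1)$ by \eqref{ass:delta_sum}; hence it is $O_{\P}(1)=\delta_n$, which is the first claim.

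For $\Sigma'_4$ I would handle its three sums in turn. The second and third sums, with coefficients $C_i\Delta_i e^{-\theta\Delta_i}$ and $C_i\Delta_{i+1}e^{-\theta\Delta_{i+1}}$, are $O(1)$ multiples of the weights $\tau_i$ and $\eta_i$ of Lemma~\ref{lem:lem4} (since $C_i\sim (2\sigma^2\theta(\Delta_i+\Delta_{i+1}))^{-1}$), so Lemma~\ref{lem:lem4}(iv) and (ii) give that they are $O_{\P}(1)=\delta_n$. For the first sum $\sum_i C'_i W_i W_{i+1}$, I would insert the decomposition \eqref{eq:decomp_W} of $W_iW_{i+1}$; the two resulting correction sums involve $C'_i(e^{-\theta_0\Delta_{i+1}}-e^{-\theta\Delta_{i+1}})$ and $C'_i(e^{-\theta_0\Delta_i}-e^{-\theta\Delta_i})$, which by \eqref{eq:DL:C} are again $O(1)$ multiples of $\eta_i$ and $\tau_i$, so Lemma~\ref{lem:lem4}(i) and (iv) make them $O_{\P}(1)$. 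There remains $\sum_i C'_i W_{i,0}W_{i+1,0} = \sum_i (C'_i/\sqrt{\alpha_{i,0}\alpha_{i+1,0}})\overline W_{i,0}\overline W_{i+1,0}$; combining \eqref{eq:DL:C} with \eqref{eq:DL:alpha} gives $C'_i/\sqrt{\alpha_{i,0}\alpha_{i+1,0}} = (\theta_0\sigma_0^2)/(\theta^2\sigma^2)\,\sqrt{\Delta_i\Delta_{i+1}}/(\Delta_i+\Delta_{i+1}) + \delta_{i,n}(\Delta_i+\Delta_{i+1})$. Dropping the single $i=2$ term (an $O_{\P}(1)$ quantity, since $\sqrt{\Delta_2\Delta_3}/(\Delta_2+\Delta_3)\leq 1/2$) leaves exactly the asserted main sum plus a remainder, yielding the second claim.

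The main obstacle is controlling the remainder $\sum_i\delta_{i,n}(\Delta_i+\Delta_{i+1})\overline W_{i,0}\overline W_{i+1,0}$ in $\Sigma'_4$ uniformly over $J$, because the products $\overline W_{i,0}\overline W_{i+1,0}$ of consecutive, hence dependent, standard normals cannot be bounded by an i.i.d. argument as for $\Sigma'_1$. Here I would exploit that these products are centered with $\E[\overline W_{i,0}\overline W_{i+1,0}\,\overline W_{j,0}\overline W_{j+1,0}]=0$ unless $i=j$, so a direct second-moment computation gives variance $\sum_i\delta_{i,n}^2(\Delta_i+\Delta_{i+1})^2 \leq K\max_i\Delta_i\sum_i(\Delta_i+\Delta_{i+1}) = o(1)$ by \eqref{ass:delta} and \eqref{ass:delta_sum}; uniformity in $(\theta,\sigma^2)$ then follows after bounding by the absolute value $K\sum_i(\Delta_i+\Delta_{i+1})|\overline W_{i,0}\overline W_{i+1,0}|$, which has $O(1)$ expectation independent of $(\theta,\sigma^2)$.
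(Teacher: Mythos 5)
Your proposal is correct and takes essentially the same route as the paper's own proof: the same expansions from Lemma \ref{lem:lem5_bis} (giving $D_i=\theta^{-1}q_i+\delta_{i,n}(\Delta_{i-1}+\Delta_i+\Delta_{i+1})$ and the expansion of $C'_i/\sqrt{\alpha_{i,0}\alpha_{i+1,0}}$), the decomposition \eqref{eq:decomp_W} of $W_iW_{i+1}$, Lemma \ref{lem:lem4} for the correction sums and for the second and third sums of $\Sigma'_4$, and the expectation bound on the absolute remainder sums. The only differences are harmless: you explicitly account for the $i=2$ term (which the paper silently absorbs into $\delta_n$), and your second-moment computation in the final paragraph is redundant, since the uniform absolute-value bound you also give---and which the paper uses via Cauchy--Schwarz---already suffices for the $O_{\P}(1)$ conclusion.
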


Now we prove that the remaining terms in $\psi (\sigma^2,\theta)$ are $O_{\P}(1)$, uniformly in $(\theta,\sigma^2) \in J$, at the exception of $\sum_{i=2}^{n-1} A'_i/A_i$, leading to the following lemma (proved in Section \ref{ssec:proof_lem_AN}).

\begin{lemma}\label{lem:psi}
We obtain 
\begin{align*}
\psi (\theta,\sigma^2)= \frac{\theta_0\sigma^2_0}{\theta^2\sigma^2} \sum_{i=3}^{n-1}\left[ q_i\overline W_{i,0}^2 
-2 \frac{\sqrt{\Delta_i\Delta_{i+1}}}{\Delta_i+\Delta_{i+1}} \overline W_{i,0} \overline W_{i+1,0} \right] -\frac{n}{\theta}+ \delta_n .
\end{align*} 
\end{lemma}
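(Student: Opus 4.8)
The plan is to take the explicit expansion of $\psi(\theta,\sigma^2)=(\partial/\partial\theta)S_n(\theta,\sigma^2)$ displayed above the statement and to isolate the three pieces that survive, showing that everything else is $O_{\P}(1)$ uniformly on $J$ and may be swept into a single $\delta_n$. Two of the surviving pieces, $\Sigma'_1$ and $-2\Sigma'_4$, are handled directly by Lemma~\ref{lem:sigma_14}: summing its two conclusions gives $\Sigma'_1-2\Sigma'_4=(\theta_0\sigma_0^2/\theta^2\sigma^2)\sum_{i=3}^{n-1}[q_i\overline W_{i,0}^2-2(\sqrt{\Delta_i\Delta_{i+1}}/(\Delta_i+\Delta_{i+1}))\overline W_{i,0}\overline W_{i+1,0}]+\delta_n$, which is the bracketed term in the statement. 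The third surviving piece is the deterministic sum $-\sum_{i=2}^{n-1}A'_i/A_i$; here I would insert the expansion \eqref{eq:DL:ratio:A}, namely $A'_i/A_i=1/\theta+\delta_{i,n}\,\Delta_i\Delta_{i+1}/(\Delta_i+\Delta_{i+1})$, and use $\Delta_i\Delta_{i+1}/(\Delta_i+\Delta_{i+1})\leq\Delta_i$ together with \eqref{ass:delta_sum} to obtain $-\sum_{i=2}^{n-1}A'_i/A_i=-(n-2)/\theta+O(1)=-n/\theta+\delta_n$, uniformly on $J$.

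It then remains to absorb the six boundary terms (the first two lines of the expansion of $\psi$), together with $\Sigma'_2$, $T'_2$ and $T'_n$, into $\delta_n$. For the boundary terms I would use that $\Delta_2/(1-e^{-2\theta\Delta_2})$ and $\Delta_n/(1-e^{-2\theta\Delta_n})$ are $O(1)$ uniformly by Lemma~\ref{lem:lem1}(i), that $\sup_{t\in[0,1]}|Y(t)|<\infty$ a.s.\ by \eqref{ass:y_bounded}, and that the normalized edge residuals such as $(y_1-e^{-\theta\Delta_2}y_2)^2/[\sigma^2(1-e^{-2\theta\Delta_2})]$ are $O_{\P}(1)$; the bound here is the $O_{\P}(1)$ refinement of \eqref{eq:insertion:un}, obtained by noting that the relevant normalized residual is a single chi-square-type variable (for instance $(y_1-e^{-\theta_0\Delta_2}y_2)/[\sigma_0^2(1-e^{-2\theta_0\Delta_2})]^{1/2}$ is standard normal) rather than a sum of $n$ of them. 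The single terms $T'_2$ and $T'_n$ are dispatched by the same order bookkeeping, using $\alpha_i/A_i\leq1$, $A'_i/A_i=O(1)$, $\alpha'_i=O(1/\Delta_i)$ and the $O_{\P}(1)$ control of $W_2^2/[\sigma^2(1-e^{-2\theta\Delta_2})]$ and $W_n^2/[\sigma^2(1-e^{-2\theta\Delta_n})]$. Finally $\Sigma'_2$ carries only a deterministic coefficient against $y_{i-1}^2$; writing $g_i\defeq e^{-\theta_0\Delta_i}-e^{-\theta\Delta_i}$ and using the orders $\alpha_i=O(1/\Delta_i)$, $B_i=O(\Delta_i)$, $D_i=O(1)$ and $|g_i|\leq K\Delta_i$ from Lemma~\ref{lem:lem5_bis}, this coefficient is $O(\Delta_i)$, so $|\Sigma'_2|\leq K\sup_{t}Y^2(t)\sum_i\Delta_i=O(1)$ a.s., with a parameter-free bound.

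The only genuinely delicate term is $\Sigma'_3=\sum_{i=3}^{n-1}\alpha_i[D_ig_i+\alpha_iB_i\Delta_ie^{-\theta\Delta_i}]W_{i,0}y_{i-1}$. The order estimates of Lemma~\ref{lem:lem5_bis} make its coefficient $c_{i,n}(\theta,\sigma^2)$ bounded uniformly in $i,n$ and $(\theta,\sigma^2)\in J$, and the crucial probabilistic structure is that $W_{i,0}$ is independent of $\{y_j:j\leq i-1\}$, so that the centered summands $W_{i,0}y_{i-1}$ are pairwise uncorrelated. At a fixed $(\theta,\sigma^2)$ this gives $\E[(\sum_ic_{i,n}W_{i,0}y_{i-1})^2]=\sum_ic_{i,n}^2\,\E[W_{i,0}^2]\,\E[y_{i-1}^2]\leq K\sum_i\Delta_i=O(1)$, hence $O_{\P}(1)$ pointwise. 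The main obstacle is upgrading this to a uniform bound over $J$: unlike in the consistency proof, where a lossy Cauchy--Schwarz estimate yielding $o(\sqrt n)$ sufficed, here the full $O_{\P}(1)$ strength is required. I would obtain uniformity exactly as in the proof of Lemma~\ref{lem:lem4}, splitting $c_{i,n}$ into a dominant contribution and a remainder of the form $\delta_{i,n}(\Delta_i+\Delta_{i+1})$ (the latter bounded by a parameter-free random quantity via Cauchy--Schwarz); where a residual parameter dependence survives at leading order through $g_i$, I would control $\sup_J$ by a Lipschitz-in-$(\theta,\sigma^2)$ argument on the compact set $J$, differentiating the coefficient and reusing the same second-moment bound for the derivative. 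Collecting the three surviving pieces and folding all the $O_{\P}(1)$ contributions into one $\delta_n$ yields the claimed expression for $\psi(\theta,\sigma^2)$.
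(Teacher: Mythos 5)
Your proposal is correct and follows essentially the same route as the paper: it extracts the leading term $\Sigma'_1-2\Sigma'_4$ via Lemma \ref{lem:sigma_14}, expands $\sum_i A'_i/A_i = n/\theta + \delta_n$ via \eqref{eq:DL:ratio:A}, and disposes of the boundary terms, $\Sigma'_2$, $\Sigma'_3$, $T'_2$ and $T'_n$ as uniform $O_{\P}(1)$ remainders, with the Lemma \ref{lem:lem4}-type coefficient splitting for $\Sigma'_3$ exactly as the paper does. Your only superfluous element is the Lipschitz-in-$(\theta,\sigma^2)$ fallback for $\Sigma'_3$: after the Taylor expansions, the leading parameter dependence factors out as the bounded scalar $(\theta-\theta_0)/(2\sigma^2\theta^2)$ (respectively $1/(2\sigma^2\theta)$) multiplying the parameter-free sum $\sum_i q_i W_{i,0}y_{i-1}$, so uniformity over $J$ is immediate and no chaining argument is needed.
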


Since $\hat\theta^2\hat\sigma^2\psi (\hat\theta,\hat\sigma^2)=0$ with probability going to $1$, and since we can show that $\sum_{i=3}^{n-1} q_i = n + O(1)$, we have
\begin{align} \label{eq:for:TCL}
n(\hat\theta\hat\sigma^2-\theta_0\sigma^2_0 )
=  \theta_0\sigma^2_0 \sum_{i=3}^{n-1}\left[q_i(\overline W_{i,0}^2 -1) 
-2 \frac{\sqrt{\Delta_i\Delta_{i+1}}}{\Delta_i+\Delta_{i+1}} \overline W_{i,0} \overline W_{i+1,0}\right] +O_{\P}(1).
\end{align}
We want to establish a Central Limit Theorem for $\sqrt{n}(\hat\theta\hat\sigma^2-\theta_0 \sigma^2_0)$. In that view, we define 
$X_{i} :=  q_i(\overline W_{i,0}^2 -1) - 2\frac{\sqrt{\Delta_i\Delta_{i+1}}}{\Delta_i+\Delta_{i+1}} \overline W_{i,0} \overline W_{i+1,0}$ and  we apply Theorem 2.1 in \cite{Neumann13} for weakly dependent variables 
(since $X_{i}$ is not necessarily independent with $X_{i-1}$  and $X_{i+1}$ but is independent with $X_k$ for $|i-k| \geq 2$). \\

Note that we can show easily that $\tau_n^2=\frac{1}{ n}\Var(\sum_{i=3}^{n-1} X_i)$, and assume  
\begin{align*}
\sqrt{n}\frac{(\hat \theta\hat \sigma^2-\theta_0 \sigma_0^2)}{\theta_0 \sigma_0^2\tau_{n}} \overset{\mathcal{D}}{\underset{n \to \infty}{\not\rightarrow}} \mathcal N (0,1).
\end{align*}
By Proposition \ref{prop:encadrement}, we can extract a subsequence $\epsilon_n$ so that $\tau_{\epsilon_n}^2\underset{n \to \infty}{\rightarrow} {\tau^2}$ with $\tau^2 \in [2,4]$ and so that
\begin{align*}
\sqrt{\epsilon_n}\frac{(\hat \theta\hat \sigma^2-\theta_0 \sigma_0^2)}{\theta_0 \sigma_0^2\tau_{\varepsilon_n}}\overset{\mathcal{D}}{\underset{n \to \infty}{\not\rightarrow}} \mathcal N(0,1).
\end{align*}

The triangular array $\left(X_i/\sqrt{\varepsilon_n}\right)_{i=3,\ldots,\varepsilon_n-1}$ satisfies the conditions of \cite[Theorem 2.1]{Neumann13}, thus we obtain
\begin{align*}
\frac{1}{\sqrt{\epsilon_n}} \sum_{i=3}^{\epsilon_n-1} X_i 
\overset{\mathcal{D}}{\underset{n \to \infty}{\rightarrow}}
 \mathcal{N}(0, \tau^2).
\end{align*}
Now, from \eqref{eq:for:TCL},
\begin{align*}
\sqrt{\epsilon_n}\frac{(\hat \theta\hat \sigma^2-\theta_0 \sigma_0^2)}{\theta_0 \sigma_0^2\tau_{\varepsilon_n}}
= \frac{1}{\sqrt{\epsilon_n}} \sum_{i=3}^{\varepsilon_n-1}\frac{X_i}{\tau_{\varepsilon_n}}+o_{\P}(1)
=  \frac{1}{\sqrt{\epsilon_n}} \sum_{i=3}^{\varepsilon_n-1}\frac{X_i}{\tau}+\left(\frac{1}{\tau_{\epsilon_n}}-\frac{1}{\tau}\right)  \frac{1}{\sqrt{\epsilon_n}} \sum_{i=3}^{\varepsilon_n-1}X_i +o_{\P}(1).
\end{align*}
Since $ \frac{1}{\sqrt{\epsilon_n}} \sum_{i=3}^{\varepsilon_n-1}X_i=O_{\P}(1)$ and $\left(\frac{1}{\tau_{\epsilon_n}}-\frac{1}{\tau}\right)=o(1)$, we get 
by Slutsky's lemma 
\begin{align*}
\sqrt{\epsilon_n}\frac{(\hat \theta\hat \sigma^2-\theta_0 \sigma_0^2)}{\theta_0 \sigma_0^2\tau_{\varepsilon_n}}\overset{\mathcal{D}}{\underset{n \to \infty}{\rightarrow}} \mathcal N(0,1),
\end{align*}
which is contradictory and ends the proof of \eqref{eq:AN}.

Now \eqref{eq:AN_1} is under consideration only when $b=B=\sigma_1^2$ and so when $aB < \theta_0 \sigma_0^2$; $Ab > \theta_0 \sigma_0^2$. Thus \eqref{eq:AN_1} is a special case of \eqref{eq:AN}.
Now, when $aB > \theta_0 \sigma_0^2$; $Ab < \theta_0 \sigma_0^2$, we have almost surely for $n$ large enough $(\partial / \partial \sigma^2) S_n( \hat{\theta},\hat{\sigma}^2 ) = 0$, so that the estimator $\hat{\sigma}_2^2$ can be expressed  explicitly, by differentiating the terms in \eqref{eq:expression:Sn:un}, \eqref{eq:rv_1} and \eqref{eq:Sn} w.r.t. $\sigma^2$. Hence, \eqref{eq:AN} can be proved in the case $aB > \theta_0 \sigma_0^2$; $Ab < \theta_0 \sigma_0^2$ by using identical techniques as in the case $aB < \theta_0 \sigma_0^2$; $Ab > \theta_0 \sigma_0^2$. We omit the details to save space. Finally, \eqref{eq:AN_2} is under consideration only when $a=A=\theta_2$ and so when $aB > \theta_0 \sigma_0^2$; $Ab < \theta_0 \sigma_0^2$. Thus \eqref{eq:AN_2} is a special case of \eqref{eq:AN}.

\subsection{Proof of Propositions \ref{prop:encadrement} and \ref{prop2}}\label{ssec:proof:props}

\begin{proof}[Proof of Proposition \ref{prop:encadrement}]
We have
\begin{align*}
\tau_n^2
=  \frac{2}{n}\sum_{i=3}^{n-1} \left[q_i^2+ 2\frac{\Delta_i\Delta_{i+1}}{(\Delta_i+\Delta_{i+1})^2} \right].
\end{align*}

(i) Upper bound for $\tau_n^2$. Let $a_i= \displaystyle\frac{\Delta_{i+1}}{\Delta_i+\Delta_{i+1}}$, note that $\displaystyle\frac{\Delta_{i-1}}{\Delta_i+\Delta_{i-1}}= 1- a_{i-1}$ and $q_i=a_i+1-a_{i-1}$.
First, we have after some trivial computations,
\begin{align} \label{eq:for:upper:bound:variance}
\tau_n^2
= &  \frac{2}{n}\sum_{i=3}^{n-1} [(a_i + 1- a_{i-1})^2 + 2 a_i(1- a_i)] \nonumber \\
= & \frac{2}{n}\sum_{i=3}^{n-1} (1+ 2 a_i -2 a_ia_{i-1})  + o(1)\\
\leq &  2+ 4 m +  o(1), \nonumber
\end{align}
where $m\defeq \displaystyle\frac{1}{n-3}\sum_{i=3}^{n-1}a_i$. 

Also, since for $k=2,...,n-1$, $0 \leq a_{k} \leq 1$, we have $1 + 2(1 - a_{i-1}) a_i \leq 3 - 2 a_{i-1}$. Thus, from \eqref{eq:for:upper:bound:variance},
\begin{align*}
\tau_n^2 \leq \frac{2}{n} \sum_{i=3}^{n-1} (3-2a_{i-1})+o(1)= 6-4m+o(1).
\end{align*}

Finally, $\tau_n^2 \leq \min\left( 2+ 4m, 6-4m \right) + o(1)$. Since $\underset{m\in [0,1]}{\sup} \min \left( 2+ 4m, 6-4m \right) =4$, $\tau_n^2 \leq 4+ o(1)$.\\

(ii) Lower bound for $\tau_n^2$. Note that $ \displaystyle\frac{1}{n}\sum_{i=3}^{n-1} q_i =1+o(1)$.
Since $\displaystyle\frac{\Delta_i\Delta_{i+1}}{(\Delta_i+\Delta_{i+1})^2}  \geq 0$, we get 

\begin{align}\label{eq:lb1}
\tau_n^2 \geq \frac{2}{n}\sum_{i=3}^{n-1}q_i^2 \geq 2\left( \frac{1}{n}\sum_{i=3}^{n-1}q_i\right)^2 + o(1) = 2+o(1).
\end{align}
\end{proof}

\begin{proof}[Proof of Proposition \ref{prop2}] (i) After some computation, we have 
\begin{align*}
\tau_{n}^2= &4\gamma_n^2-4\gamma_n+4 + o(1).
\end{align*}
Since $\gamma_n=1/n$, then  $\tau_n^2 \underset{n \to \infty} \to  4$.\\

(ii) We have
\begin{align*}
\tau_n^2 & \leq  \frac 2n \sum_{i= \lfloor n^{\alpha}\rfloor +2}^{n-1} \left(\left(\frac{1}{i+2}+\frac{i}{i+1}\right)^2+\frac{2(i+1)}{(i+2)^2}\right)+o(1) \leq 2+o(1).
%
%
\end{align*}
As a consequence, this particular design realizes  $\tau_n^2 =2+o(1)$ by \eqref{eq:lb1}.
\end{proof}

\subsection{Proofs of the lemmas of Section \ref{ssec:proof_AN}}\label{ssec:proof_lem_AN}

\begin{proof}[Proof of Lemma \ref{lem:sigma_14}]
(i) From \eqref{eq:DL:alpha:B:prime} and \eqref{eq:DL:alpha:prime:B},
\begin{align*}
\Sigma_1' 
=  & \frac{1}{\theta}\sum_{i=3}^{n-1} \frac{\alpha_i}{\alpha_{i,0}} q_i \overline W_{i,0}^2 + \sum_{i=3}^{n-1} \frac{\alpha_i}{\alpha_{i,0}} \delta_{i,n}(\Delta_{i-1} + \Delta_i + \Delta_{i+1}) \overline W_{i,0}^2\\
\end{align*}
where $q_i$ has been defined in \eqref{def:q}. By \eqref{eq:DL:alpha}, we have 
\[
\frac{\alpha_i}{\alpha_{i,0}}=\frac{\theta_0\sigma_0^2 }{\sigma^2 \theta^2} + \delta_{i,n} \Delta_i.
\]
Moreover, since $\E[\overline W_{i,0}^2]=1$, one clearly has 
\[ 
\sum_{i=3}^{n-1} \frac{\alpha_i}{\alpha_{i,0}} \delta_{i,n} (\Delta_{i-1} + \Delta_i + \Delta_{i+1}) \overline W_{i,0}^2=O_{\P}(1)
\]
that leads to the desired result.

(ii) Now we study the first sum of $\Sigma_4'$ that rewrites $\sum_{i=2}^{n-1} C'_i(M_{i,1}+M_{i,2}+M_{i,3})$ using \eqref{eq:decomp_W} and where
$C'_i$ has been defined in \eqref{def:C:prime} and
\begin{equation*}
\begin{cases}
M_{i,1} \defeq \frac{\overline W_{i,0}\overline W_{i+1,0}}{\alpha_{i,0}^{1/2}\alpha_{i+1,0}^{1/2}}\\
M_{i,2} \defeq (e^{- \theta_0 \Delta_{i+1}} -e^{-\theta \Delta_{i+1}} )W_{i,0}  y_i \\
M_{i,3} \defeq (e^{ -\theta_0 \Delta_{i}} -e^{-\theta \Delta_{i}} )W_{i+1}y_{i-1}.\\
\end{cases}
\end{equation*}

$\bullet$ 
First, we consider $\displaystyle\sum_{i=2}^{n-1}  C'_iM_{i,1}$. By \eqref{eq:DL:C} and  \eqref{eq:DL:alpha} we can show
\[
\frac{C'_i}{\alpha_{i,0}^{1/2}\alpha_{i+1,0}^{1/2}} 
=
\frac{\theta_0 \sigma_0^2}{\theta^2 \sigma^2}
\frac{\sqrt{\Delta_{i} \Delta_{i+1}}}{\Delta_{i} + \Delta_{i+1}}
+ \delta_{i,n} (\Delta_i+\Delta_{i+1}).
\]
Furthermore we have

\begin{align*}
\E\left[ \left| \sum_{i=2}^{n-1} \delta_{i,n} (\Delta_i+\Delta_{i+1}) \overline W_{i,0} \overline W_{i+1,0}  \right|\right] 
\leq& K \sum_{i=2}^{n-1}(\Delta_i+\Delta_{i+1}) \sqrt{\E[\overline W_{i,0}^2 ]\E[\overline W_{i+1,0}^2]}\\
= & K \sum_{i=2}^{n-1}(\Delta_i+\Delta_{i+1}) \leq K.
\end{align*} 
Thus
\begin{align*}
\sum_{i=2}^{n-1} \frac{C'_i}{\alpha_{i,0}^{1/2} \alpha_{i+1,0}^{1/2}} \overline W_{i,0} \overline W_{i+1,0} = \frac{\theta_0\sigma^2_0}{\theta^2\sigma^2}\sum_{i=2}^{n-1}\frac{\sqrt{\Delta_i\Delta_{i+1}}}{\Delta_i+\Delta_{i+1}} \overline W_{i,0} \overline W_{i+1,0}+ \delta_n.
\end{align*} 
Hence 
\[
\sum_{i=2}^{n-1} C'_i M_{i,1} = 
\frac{\theta_0 \sigma_0^2}{\theta^2 \sigma^2} \sum_{i=2}^{n-1} \frac{ \sqrt{\Delta_i \Delta_{i+1} } }{\Delta_i + \Delta_{i+1}} \overline W_{i,0}  \overline W_{i+1,0} + \delta_n.
\]

$\bullet$ Second, one clearly has 
\[
\sum_{i=2}^{n-1}  C'_iM_{i,2}
= \frac{\theta - \theta_0} {2\theta^2\sigma^2} \sum_{i=2}^{n-1} \frac{ \Delta_{i+1}}{\Delta_i+\Delta_{i+1}}(1+\delta_{i,n}(\Delta_i+\Delta_{i+1})) W_{i,0}y_i.
\]
Hence $\underset{( \theta,\sigma^2) \in J}{\sup} \left|\sum_{i=2}^{n-1}  C'_iM_{i,2} \right|= O_{\P}(1)$ by Lemma \ref{lem:lem4} (i).\\

$\bullet$ Third, we get  
\[
 \sum_{i=2}^{n-1}  C'_iM_{i,3}=  \frac{\theta-\theta_0}{2\theta^2\sigma^2} \sum_{i=2}^{n-1} \frac{\Delta_i}{\Delta_i+\Delta_{i+1}} \left(1+\delta_{i,n}(\Delta_i+\Delta_{i+1})\right)W_{i+1}y_{i-1}
 \]
and $\underset{( \theta,\sigma^2) \in J}{\sup} \left|\sum_{i=2}^{n-1}  C'_iM_{i,3} \right|= O_{\P}(1)$ by Lemma \ref{lem:lem4} (iv).\\

(iii) We now consider the second and third sums in $\Sigma'_4$.

Using \eqref{eq:DL:alpha} and \eqref{eq:DL:A}, we can show
\[
C_i\Delta_i e^{-\theta \Delta_{i}}
= 
\frac{1}{2\theta \sigma^2} \frac{ \Delta_i }{ \Delta_i + \Delta_{i+1} } ( 1 + \delta_{i,n}(  \Delta_{i} + \Delta_{i+1}) )
\]
and
\[
C_i\Delta_{i+1} e^{-\theta \Delta_{i+1}}
= 
\frac{1}{2\theta \sigma^2} \frac{ \Delta_{i+1} }{ \Delta_i + \Delta_{i+1} } ( 1 + \delta_{i,n}(  \Delta_{i} + \Delta_{i+1}) ).
\]

Hence by Lemma \ref{lem:lem4} (iv) and (ii), $\underset{( \theta,\sigma^2) \in J}{\sup} \left\vert \displaystyle\sum_{i=2}^{n-1}C_i(\Delta_ie^{-\theta \Delta_{i}}W_{i+1}y_{i-1}+ \Delta_{i+1}e^{-\theta \Delta_{i+1}}W_{i}y_{i}) \right\vert = O_{\P}(1)$.
\end{proof}

\begin{proof}[Proof of Lemma \ref{lem:psi}]
$\bullet$ We have 
\[
\underset{a \leq \theta \leq A}\sup
\left| \displaystyle\frac{2\Delta_2e^{-2 \theta \Delta_2} }{1-e^{-2 \theta \Delta_2}} +\frac{2\Delta_ne^{-2 \theta \Delta_n} }{1-e^{-2 \theta \Delta_n}} \right| =O(1).
\]

$\bullet$ For $n$ large enough,
\begin{eqnarray*}
\underset{(\theta,\sigma^2) \in J}\sup
\left| \frac{2\Delta_2e^{-\theta \Delta_2} y_2[y_1-e^{-\theta \Delta_2}y_2]}{\sigma^2(1-e^{-2 \theta \Delta_2})} \right|
 \leq  K \underset{a \leq \theta \leq A}\sup |y_2||y_1-e^{-\theta \Delta_2}y_2| 
 \leq K \underset{t\in[0,1]}\sup Y(t)^2 =O_{\mathbb{P}}(1).
\end{eqnarray*}

$ \bullet$ 
Using $W_i^2 = W_{i,0}^2 + ( e^{- \theta_0 \Delta_i} - e^{- \theta \Delta_i} ) y_{i-1} W_i + W_{i,0}( e^{- \theta_0 \Delta_i} - e^{- \theta \Delta_i} ) y_{i-1}$ we can easily show
$\underset{( \theta,\sigma^2) \in J}{\sup} \left| \frac{2\Delta_2e^{-2 \theta \Delta_2} (y_1-e^{-\theta \Delta_2}y_2)^2}{\sigma^2(1-e^{-2 \theta \Delta_2})^2}
\right| =O_{\P}(1)$,
$\underset{( \theta,\sigma^2) \in J}{\sup} \left| \displaystyle\frac{2\Delta_ne^{- \theta \Delta_n} y_{n-1}W_{n}}{\sigma^2(1-e^{-2 \theta \Delta_n})} \right| =O_\P(1)$
and 
$\underset{( \theta,\sigma^2) \in J}{\sup} \left| \frac{2\Delta_ne^{-2 \theta \Delta_n} W_{n}^2}{\sigma^2(1-e^{-2 \theta \Delta_n})^2} \right|=O_{\P}(1)$.\\

$\bullet$ \textbf{Term $\Sigma'_2$}: First, using \eqref{eq:DL:alpha:B:prime}, \eqref{eq:DL:alpha:prime:B} and the definition \eqref{def:q} of $q_i$, the deterministic quantity $D_i$ is bounded uniformly in $(\theta,\sigma^2) \in J$, and so is $(e^{-2\theta_0\Delta_i}-e^{-2\theta\Delta_i})^2\alpha_i/\Delta_i$ from \eqref{eq:DL:alpha}.
By \eqref{ass:y_bounded},  we are led to
\begin{align*}
\underset{( \theta,\sigma^2) \in J}{\sup} \left| \displaystyle\sum_{i=3}^{n-1}\Delta_i\frac{\alpha_i}{\Delta_i} (e^{- \theta_0 \Delta_i}-e^{-\theta \Delta_i})^2  y^2_{i-1}D_i \right| = O_{\P}(1).
\end{align*}
Similarly, $ \underset{( \theta,\sigma^2) \in J}{\sup} \left| \displaystyle\sum_{i=3}^{n-1} 2 \alpha_i^2 B_i \Delta_i e^{-\theta \Delta_i} (e^{- \theta_0 \Delta_i}-e^{-\theta \Delta_i}) y^2_{i-1} \right|= O_{\P}(1)$ and thus
$\underset{( \theta,\sigma^2) \in J}{\sup} |\Sigma'_2| = O_{\P}(1)$.\\

$\bullet$ \textbf{Term $\Sigma'_3$}: First, from \eqref{eq:DL:alpha:B:prime}, \eqref{eq:DL:alpha:prime:B} and \eqref{eq:DL:alpha}, we have
\[
\underset{n \in \N, i=2,...,n-1,( \theta,\sigma^2) \in J}{\sup}
\left|
\alpha_i D_i (e^{- \theta_0 \Delta_i} - e^{- \theta \Delta_i}) - \frac{\theta - \theta_0}{2 \sigma^2 \theta^2} q_i
\right| \leq K.
\]
Hence, proceeding as in the proof of Lemma \ref{lem:lem4}, we can show
\begin{align*}
\underset{( \theta,\sigma^2) \in J}{\sup}
\left| 
\displaystyle\sum_{i=3}^{n-1} \alpha_iD_i(e^{- \theta_0 \Delta_i}-e^{-\theta \Delta_i}) W_{i} y_{i-1} \right|
=O_{\P}(1).
\end{align*}

Second, we can show
\[
\underset{n \in \N, i=3,...,n-1,( \theta,\sigma^2) \in J}{\sup}
\left|
\alpha_i^2 B_i \Delta_i e^{- \theta \Delta_i} - \frac{1}{2 \sigma^2 \theta} 
\left(
\frac{\Delta_{i+1}}{\Delta_{i}+\Delta_{i+1}}
+
\frac{\Delta_{i-1}}{\Delta_{i}+\Delta_{i-1}}
\right)
\right| \leq K.
\]
Hence we can show
\[
\underset{( \theta,\sigma^2) \in J}{\sup}
\left| \sum_{i=3}^{n-1} \alpha_i^2 B_i\Delta_i e^{-\theta \Delta_i} W_{i,0} y_{i-1}\right|
= O_\P(1),
\]
as in the proof of Lemma \ref{lem:lem4}.
Hence finally, $\underset{( \theta,\sigma^2) \in J}{\sup} \Sigma'_3 = O_{\P}(1)$.\\

$\bullet$ \textbf{Term $T'_2$}: From (\ref{def:A}), we have 
\[
| T'_2| \leq 2 |\alpha'_2|W_2^2 + |\alpha_2|\Delta_2e^{-\theta\Delta_2} | y_1| +W_2^2 | A'_2|.
\]
We can show 
\[
\underset{( \theta,\sigma^2) \in J}{\sup} (|\alpha'_2|W_2^2 + W_2^2 | A'_2| ) =  O_{\P}(1)
\]
by using $W_2^2 = W_{2,0}^2 + ( e^{- \theta_0 \Delta_2} - e^{- \theta \Delta_2} ) y_1W_2 + W_{2,0}( e^{- \theta_0 \Delta_2} - e^{- \theta \Delta_2} ) y_1$.

Finally, $
\underset{( \theta,\sigma^2) \in J}{\sup} |\alpha_2|\Delta_2e^{-\theta\Delta_2} =  O_{\P}(1)
$, which finally shows $\underset{( \theta,\sigma^2) \in J}{\sup} |T_2'| = O_\P(1)$.

~

$\bullet$ \textbf{Term $T'_n$}: 
Using \eqref{eq:DL:alpha}, \eqref{eq:DL:A} and \eqref{eq:DL:A:prime}, we get
\[
\underset{( \theta,\sigma^2) \in J}{\sup} \left|2\alpha_n\Delta_n
\right|
\leq K \quad \text{and} \quad 
\underset{( \theta,\sigma^2) \in J}{\sup}
| 2\alpha_n\Delta_ne^{- 2\theta \Delta_n}|
\leq K.
\]
Moreover, one has $| \frac{\alpha_n e^{-2\theta\Delta_n}}{A_{n-1}} | \leq 1$.
Finally, we have $\underset{( \theta,\sigma^2) \in J}{\sup} W_n^2 = O_\P(1)$ and
$\underset{( \theta,\sigma^2) \in J}{\sup} | y_{n-1} W_n | = O_\P(1)$. Hence, in order to show $\underset{( \theta,\sigma^2) \in J}{\sup} |T_n'| = O_\P(1)$ it remains to show $\underset{( \theta,\sigma^2) \in J}{\sup} |\alpha'_n W_n^2| = O_\P(1) $
and $\underset{( \theta,\sigma^2) \in J}{\sup} |(A_{n-1}'/A_{n-1}) \alpha_n W_n^2| = O_\P(1) $.
This is shown by using $W_n^2 = W_{n,0}^2 + ( e^{- \theta_0 \Delta_n} - e^{- \theta \Delta_n} ) y_{n-1} W_n + W_{n,0}( e^{- \theta_0 \Delta_n} - e^{- \theta \Delta_n} ) y_{n-1} $.

$\bullet$   \textbf{Term $\displaystyle\sum_{i=2}^{n-1}\frac{A'_i}{A_i}$}: By \eqref{eq:DL:ratio:A},
  
  $$\sum_{i=2}^{n-1}\frac{A'_i}{A_i} = \frac{n}{\theta}+ \delta_n .$$
 
Finally,
\begin{align*}
\psi (\sigma^2,\theta)&= \frac{\theta_0\sigma^2_0}{\theta^2\sigma^2}\sum_{i=3}^{n-1}q_i\overline W_{i,0}^2 -2 \sum_{i=3}^{n-1} \frac{\sqrt{\Delta_i \Delta_{i+1}}}{\Delta_i+ \Delta_{i+1}} \overline W_{i,0} \overline W_{i+1,0} -\frac{n}{\theta}+ \delta_n\\
\end{align*} 
using Lemma \ref{lem:sigma_14}.
\end{proof}

\subsection{Proof of Theorems \ref{theorem:consistency:with:mean} and \ref{theorem:AN:with:mean}} \label{subsection:proof:with:mean}

In this section and the next one, we let $||\boldsymbol{A}||$ denote the largest singular value of a matrix $\boldsymbol{A}$ and $||\boldsymbol{v}||$ denote the Euclidean norm of a vector $||\boldsymbol{v}||$. Finally, for $k \in \{1,...,p\}$, we let $\f^{(k)} = (f_k(s_1),...,f_k(s_n))'$.
We first provide a decomposition of $\bar{S}_n$ in the following lemma.
\begin{lemma} \label{lem:expression:barSn}
We have
\[
\bar{S}_n(\theta,\sigma^2) = S_n(\theta,\sigma^2) -
r_1(\theta) + \frac{r_2(\theta) + 2 r_3(\theta) - r_4(\theta)}{\sigma^2},  
\]
with
\begin{align*}
r_1(\theta)  & =  \sum_{i=1}^n \left[
\log \left(
 (\R_{\theta}^{-1})_{ii}  - \bar{\epsilon}_{-i} 
 \right)
 -
 \log \left(
 (\R_{\theta}^{-1})_{ii} 
 \right)
\right], \\
r_2(\theta)  & = \sum_{i=1}^n 
(\R_{\theta}^{-1})_{ii} \epsilon_{-i}^2,
 \\
r_3(\theta)  & = \sum_{i=1}^n 
(\R_{\theta}^{-1})_{ii} \epsilon_{-i} 
(y_i - \hat{Y}_{\theta,-i}(s_i)),
 \\
 r_4(\theta)  & = \sum_{i=1}^n 
\bar{\epsilon}_{-i}
(y_i - \hat{Y}_{\theta,-i}(s_i) + \epsilon_{-i})^2,
 \\
 \epsilon_{-i} & =  [\f_i' - \r_{\theta,-i}' \R_{\theta,-i}^{-1} \F_{-i} ] (\bbeta_0 - \hat{\bbeta}_{-i}), \\
  \bar{\epsilon}_{-i} & = \e_{i,n}' \R_{\theta}^{-1} \F ( \F' \R_{\theta}^{-1} \F )^{-1} \F' \R_{\theta}^{-1} \e_{i,n},
\end{align*}
where $\e_{i,n}$ is the $i$-th base column vector of $\mathbb{R}^n$. We remark that $r_k(\theta)$ does not depend on $\sigma^2$ for $k=1,...,4$.
\end{lemma}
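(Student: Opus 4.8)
The plan is to compare $\bar{S}_n$ with $S_n$ summand by summand, keeping the logarithmic terms and the quadratic terms separate. Two algebraic identities will carry the whole argument. The first is the diagonal relation $(\Q_{\theta}^{-})_{ii} = (\R_{\theta}^{-1})_{ii} - \bar{\epsilon}_{-i}$, which can be read off directly from $\Q_{\theta}^{-} = \R_{\theta}^{-1} - \R_{\theta}^{-1}\F ( \F' \R_{\theta}^{-1} \F)^{-1} \F' \R_{\theta}^{-1}$ together with the definition of $\bar{\epsilon}_{-i}$. The second is the residual identity
\[
z_i - \hat{Z}_{\theta,-i}(s_i) = \left( y_i - \hat{Y}_{\theta,-i}(s_i) \right) + \epsilon_{-i},
\]
which I will establish first and which reduces the mean-corrected leave-one-out residual to the no-mean one plus the correction $\epsilon_{-i}$.

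To prove the residual identity, I would write the observation model at the design points as $z_i = \f_i' \bbeta_0 + y_i$ and $\z_{-i} = \F_{-i} \bbeta_0 + \y_{-i}$, where $\y_{-i} = (y_1,\dots,y_{i-1},y_{i+1},\dots,y_n)'$. Substituting the expression for $\z_{-i}$ into \eqref{eq:loo:pred:with:mean} and using that the no-mean leave-one-out predictor satisfies $\hat{Y}_{\theta,-i}(s_i) = \r_{\theta,-i}' \R_{\theta,-i}^{-1} \y_{-i}$ (the simple-kriging form of the conditional expectation defining $\hat{Y}_{\theta,-i}(s_i)$, equivalently \eqref{eq:loo:y:pred}), I obtain
\[
\hat{Z}_{\theta,-i}(s_i)
= \f_i' \hat{\bbeta}_{-i}
+ \r_{\theta,-i}' \R_{\theta,-i}^{-1} \F_{-i} (\bbeta_0 - \hat{\bbeta}_{-i})
+ \hat{Y}_{\theta,-i}(s_i).
\]
Subtracting this from $z_i = \f_i'\bbeta_0 + y_i$ and grouping the terms proportional to $\bbeta_0 - \hat{\bbeta}_{-i}$ produces exactly $\epsilon_{-i} = [\f_i' - \r_{\theta,-i}' \R_{\theta,-i}^{-1} \F_{-i}](\bbeta_0 - \hat{\bbeta}_{-i})$, which gives the claimed identity.

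With these two identities in hand, the rest is bookkeeping. Using $\check{\sigma}^2_{\theta,\sigma^2,-i}(s_i) = \sigma^2 / (\Q_{\theta}^{-})_{ii}$ from \eqref{eq:hat:sigma:pred:with:mean} and the diagonal relation, the logarithmic terms of $\bar{S}_n$ become $\sum_{i=1}^n [ \log \sigma^2 - \log( (\R_{\theta}^{-1})_{ii} - \bar{\epsilon}_{-i} ) ]$; subtracting the corresponding logarithmic terms of $S_n$, namely $\sum_{i=1}^n [ \log \sigma^2 - \log (\R_{\theta}^{-1})_{ii} ]$, produces precisely $-r_1(\theta)$. For the quadratic part, setting $u_i \defeq y_i - \hat{Y}_{\theta,-i}(s_i)$, the residual identity and the diagonal relation turn the quadratic part of $\bar{S}_n$ into
\[
\frac{1}{\sigma^2} \sum_{i=1}^n \left( (\R_{\theta}^{-1})_{ii} - \bar{\epsilon}_{-i} \right) ( u_i + \epsilon_{-i} )^2 .
\]
Expanding $(u_i + \epsilon_{-i})^2$ and distributing will split this into four sums: $\frac{1}{\sigma^2}\sum_i (\R_{\theta}^{-1})_{ii} u_i^2$, which is exactly the quadratic part of $S_n$; the cross term $\frac{2}{\sigma^2}\sum_i (\R_{\theta}^{-1})_{ii} u_i \epsilon_{-i}$, equal to $2 r_3(\theta)/\sigma^2$; the term $\frac{1}{\sigma^2}\sum_i (\R_{\theta}^{-1})_{ii}\epsilon_{-i}^2 = r_2(\theta)/\sigma^2$; and $-\frac{1}{\sigma^2}\sum_i \bar{\epsilon}_{-i}(u_i+\epsilon_{-i})^2 = -r_4(\theta)/\sigma^2$. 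Collecting the logarithmic and quadratic contributions then yields the stated decomposition.

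The only genuinely delicate step is the residual identity: one must recognize that the kriging part of the mean-corrected predictor \eqref{eq:loo:pred:with:mean} decomposes into the no-mean leave-one-out predictor $\hat{Y}_{\theta,-i}(s_i)$ plus a term depending only on $\bbeta_0 - \hat{\bbeta}_{-i}$, so that the entire $\bbeta_0$-dependence of the residual is captured by the single scalar $\epsilon_{-i}$. Everything else is routine algebra; the decomposition holds pointwise in $(\theta,\sigma^2)$, and the $r_k(\theta)$ are manifestly independent of $\sigma^2$, consistent with the final remark of the lemma.
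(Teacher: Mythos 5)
Your proposal is correct and follows essentially the same route as the paper: the residual identity $z_i - \hat{Z}_{\theta,-i}(s_i) = (y_i - \hat{Y}_{\theta,-i}(s_i)) + \epsilon_{-i}$, the diagonal relation $(\Q_{\theta}^{-})_{ii} = (\R_{\theta}^{-1})_{ii} - \bar{\epsilon}_{-i}$, and then the same expansion of the logarithmic and quadratic parts into $-r_1$, $r_2$, $2r_3$ and $-r_4$. The only difference is presentational (you separate the log and quadratic bookkeeping, whereas the paper does it in one chain of equalities), so there is nothing to add.
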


We now show that, in Lemma \ref{lem:expression:barSn}, the term $S_n$ only, corresponding to the zero-mean case, is preponderant.

\begin{lemma} \label{lem:control:rest}
With the notation of Lemma \ref{lem:expression:barSn}, we have for $k=1,...,4$ 
\[
\sup_{\theta \in [a,A]} | r_k(\theta)  |  = O(1)  ~ \mbox{a.s.}
\]
\end{lemma}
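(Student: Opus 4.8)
The plan is to show that each correction term $r_k(\theta)$ is a higher-order perturbation of the zero-mean quantities, controlled entirely by the leave-one-out prediction of the \emph{deterministic} regressors $f_1,\dots,f_p$, and that the $C^2$ smoothness forces these predictions to be of high order in the interpoint distances. Throughout I would set $v_i \defeq \R_\theta^{-1}\e_{i,n}$, so that $\bar\epsilon_{-i} = v_i'\F(\F'\R_\theta^{-1}\F)^{-1}\F'v_i$; since $\Q_\theta^{-} = \R_\theta^{-1} - \R_\theta^{-1}\F(\F'\R_\theta^{-1}\F)^{-1}\F'\R_\theta^{-1}$, the matrix in the middle is positive semidefinite and one has $0\le\bar\epsilon_{-i} = (\R_\theta^{-1})_{ii} - (\Q_\theta^{-})_{ii} < (\R_\theta^{-1})_{ii}$ by positivity of the conditional variance in \eqref{eq:hat:sigma:pred:with:mean}. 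By the tridiagonal form \eqref{eq:tridiagonal:inverse}, $v_i$ is supported on $\{i-1,i,i+1\}$ with entries $-\alpha_i e^{-\theta\Delta_i}$, $A_i$, $-\alpha_{i+1}e^{-\theta\Delta_{i+1}}$ (taking $\sigma^2=1$, with $\alpha_i,A_i$ as in \eqref{def:alpha}, \eqref{def:A}), so the $k$-th entry of $\F'v_i$ equals $A_i f_k(s_i)-\alpha_i e^{-\theta\Delta_i}f_k(s_{i-1})-\alpha_{i+1}e^{-\theta\Delta_{i+1}}f_k(s_{i+1})$. A second-order Taylor expansion of $f_k$ about $s_i$, combined with the estimates of Lemma \ref{lem:lem5_bis}, shows that the three coefficients sum to $O(\Delta_i+\Delta_{i+1})$ and that the $f_k'(s_i)$-coefficient is $O((\Delta_i+\Delta_{i+1})^2)$, whence $\|\F'v_i\|=O(\Delta_i+\Delta_{i+1})$ for interior indices (and $O(1)$ for $i\in\{1,n\}$). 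Consequently the deterministic prediction error $g_{k,i}\defeq f_k(s_i)-\hat{(f_k)}_{\theta,-i}(s_i) = (\F'v_i)_k/(\R_\theta^{-1})_{ii}$ satisfies $\|g_{\cdot,i}\|=O(\Delta_i\Delta_{i+1})$, since $(\R_\theta^{-1})_{ii}=A_i$ is of order $\Delta_i^{-1}+\Delta_{i+1}^{-1}$.

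Next I would control $(\F'\R_\theta^{-1}\F)^{-1}$. The Markov structure (equivalently \eqref{eq:tridiagonal:inverse}) gives $u'\R_\theta^{-1}w = u_1w_1 + \sum_{i=2}^n\alpha_i(u_i-e^{-\theta\Delta_i}u_{i-1})(w_i-e^{-\theta\Delta_i}w_{i-1})$, and with $u=\f^{(k)}$, $w=\f^{(l)}$ together with $f_k(s_i)-e^{-\theta\Delta_i}f_k(s_{i-1})=\Delta_i(f_k'+\theta f_k)(s_i)+O(\Delta_i^2)$ this exhibits $\F'\R_\theta^{-1}\F$ as a Riemann sum converging, uniformly in $\theta\in[a,A]$ by compactness, to $G(\theta)$ with $G(\theta)_{kl}=f_k(0)f_l(0)+\frac{1}{2\theta}\int_0^1(f_k'+\theta f_k)(f_l'+\theta f_l)$. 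For $c\ne 0$ and $g=\sum_k c_k f_k$ one has $c'G(\theta)c = g(0)^2+\frac{1}{2\theta}\int_0^1(g'+\theta g)^2$, which vanishes only if $g(0)=0$ and $g'+\theta g\equiv 0$, i.e. $g\equiv 0$; linear independence of the $f_k$ then forces $c=0$, so $G(\theta)$ is positive definite with smallest eigenvalue bounded below by some $c_0>0$ uniformly in $\theta\in[a,A]$. Hence $\|(\F'\R_\theta^{-1}\F)^{-1}\|=O(1)$ uniformly for $n$ large, and likewise for the leave-one-out matrices $\F_{-i}'\R_{\theta,-i}^{-1}\F_{-i}$ since deleting one point perturbs the Riemann sum by $O(\max_i\Delta_i)$. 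Combining with the first paragraph gives $\bar\epsilon_{-i}=O((\Delta_i+\Delta_{i+1})^2)$ in the interior, so that $\sum_{i=1}^n\bar\epsilon_{-i}=O(1)$ because $\sum_i(\Delta_i+\Delta_{i+1})^2\le 2(\max_i\Delta_i)\sum_i(\Delta_i+\Delta_{i+1})=o(1)$.

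The main obstacle is the a.s. bound $\sup_{i}\sup_{\theta\in[a,A]}\|\bbeta_0-\hat{\bbeta}_{-i}\|=O(1)$. Writing $\z=\F\bbeta_0+\y$ gives $\bbeta_0-\hat\bbeta_{-i}=-(\F_{-i}'\R_{\theta,-i}^{-1}\F_{-i})^{-1}\F_{-i}'\R_{\theta,-i}^{-1}\y_{-i}$, so by the previous paragraph it suffices to show $\sup_i\sup_\theta\|\F_{-i}'\R_{\theta,-i}^{-1}\y_{-i}\|=O(1)$ a.s.; via the quadratic-form identity for the $(n-1)$-point design, its $k$-th entry is $f_k(s_1)y_1+\sum_j c_j W_j$ with $c_j=\alpha_j(f_k(s_j)-e^{-\theta\Delta_j}f_k(s_{j-1}))=O(1)$ and $W_j=y_j-e^{-\theta\Delta_j}y_{j-1}$. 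A termwise bound fails ($\sum_j|c_jW_j|$ has $O(n)$ summands), so I would use Abel summation to rewrite it as $c_nY(s_n)-c_2e^{-\theta\Delta_2}Y(s_1)+\sum_j(c_j-c_{j+1}e^{-\theta\Delta_{j+1}})Y(s_j)$; since $f_k\in C^2$ the coefficient $c_j=\frac{1}{2\theta}(f_k'+\theta f_k)(s_j)+O(\Delta_j)$ varies smoothly, whence $c_j-c_{j+1}e^{-\theta\Delta_{j+1}}=O(\Delta_j+\Delta_{j+1})$ and the whole expression is bounded by $K\sup_{t\in[0,1]}|Y(t)|(1+\sum_j(\Delta_j+\Delta_{j+1}))=O(1)$ a.s. by \eqref{ass:y_bounded} — this is exactly where twice differentiability enters. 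With this in hand I would assemble the four terms: $r_1=\sum_i\log(1-\bar\epsilon_{-i}/(\R_\theta^{-1})_{ii})$ with every ratio $o(1)$, so $|r_1|\le 2\sum_i\bar\epsilon_{-i}/(\R_\theta^{-1})_{ii}=o(1)$; $r_2\le\sup_i\|\bbeta_0-\hat\bbeta_{-i}\|^2\sum_i A_i\|g_{\cdot,i}\|^2=O(1)\cdot o(1)$, using $\sum_i A_i\|g_{\cdot,i}\|^2=O((\max_i\Delta_i)^2)$; for $r_3$, since $(\R_\theta^{-1})_{ii}(y_i-\hat Y_{\theta,-i}(s_i))=\alpha_i W_i-\alpha_{i+1}e^{-\theta\Delta_{i+1}}W_{i+1}$ by \eqref{eq:y:minus:haty} and $\|g_{\cdot,i}\|=O(\Delta_i\Delta_{i+1})$, each summand is $O(\Delta_{i+1}|W_i|+\Delta_i|W_{i+1}|)$ with $|W_i|\le 2\sup_t|Y(t)|$, so $|r_3|=O(1)$ a.s.; and $r_4\le\big(\sup_i(y_i-\hat Y_{\theta,-i}(s_i)+\epsilon_{-i})^2\big)\sum_i\bar\epsilon_{-i}=O(1)$, because $|y_i-\hat Y_{\theta,-i}(s_i)|\le|W_i|+K|W_{i+1}|=O(1)$ a.s. and $|\epsilon_{-i}|\le\|g_{\cdot,i}\|\sup_j\|\bbeta_0-\hat\bbeta_{-j}\|=o(1)$.
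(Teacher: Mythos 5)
Your proposal is correct and follows essentially the same route as the paper: you establish the same three intermediate estimates --- the bound $|\e_{i,n}'\R_{\theta}^{-1}\f^{(k)}| = O(\Delta_i+\Delta_{i+1})$ via second-order Taylor expansion (the paper's Lemma \ref{lem:eint:Rmun:un}), the convergence of $\F'\R_{\theta}^{-1}\F$ to an invertible limit via the same Riemann-sum and linear-independence argument (Lemma \ref{lem:unt:Rmun:un}), and the a.s.\ bound on $\F'\R_{\theta}^{-1}\y$, hence on $\bbeta_0-\hat{\bbeta}_{-i}$, via Abel summation exploiting $f_k\in C^2$ and $\sup_{t\in[0,1]}|Y(t)|<\infty$ (Lemma \ref{lem:unt:Rmun:y} and \eqref{eq:bound:hat:beta}) --- and then assemble $r_1,\dots,r_4$ exactly as the paper does. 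Your systematic use of the Markovian quadratic-form factorization of $\R_{\theta}^{-1}$ (and the slightly sharper bounds $\|g_{\cdot,i}\|=O(\Delta_i\Delta_{i+1})$, $r_1=o(1)$) is only a bookkeeping variant of the paper's entry-wise manipulation of \eqref{eq:tridiagonal:inverse}, not a different argument.
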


Because of Lemmas \ref{lem:expression:barSn} and \ref{lem:control:rest}, we have a.s.
\[
\sup_{(\theta,\sigma^2) \in J} 
\left|
\bar{S}_n( \theta,\sigma^2 ) 
-
S_n( \theta,\sigma^2 ) 
\right|
= O(1).
\]
Hence Theorem \ref{theorem:consistency:with:mean} follows from \eqref{eq:to:finish:consistency}. 

Also, from Lemmas \ref{lem:Sn}, \ref{lem:expression:barSn} and \ref{lem:control:rest}, we have
\begin{eqnarray*}
\bar{S}_n( \theta,\sigma^2) & = & n \log(\sigma^2)+\log( 1-e^{-2 \theta \Delta_2} ) +  \log( 1-e^{-2 \theta \Delta_n} )  \\
& & - \sum_{i=2}^{n-1} \log
\left(
 \frac{1}{
 1-e^{- 2 \theta \Delta_i}
 }
  + \frac{e^{- 2 \theta \Delta_{i+1} }
 }{
 1-e^{- 2 \theta \Delta_{i+1}}
 } 
 \right) - r_1(\theta)
\\
& & + 
\frac{1}{\sigma^2}
\Bigg \{
 \frac{ (y_1 - e^{- \theta \Delta_2} y_2 )^2 }{ (1-e^{-2 \theta \Delta_2})}    + \frac{ (y_n - e^{- \theta \Delta_n} y_{n-1} )^2 }{  (1-e^{-2 \theta \Delta_n}) }
 \\
 &  &  +  \sum_{i=2}^{n-1} 
 \left[
  \frac{1}{
 1-e^{- 2 \theta \Delta_i}
 }
  + \frac{e^{- 2 \theta \Delta_{i+1} }
 }{
 1-e^{- 2 \theta \Delta_{i+1}}
 } 
 \right]
 \left[
 y_i
 -
 \frac{
   \frac{e^{- \theta \Delta_i}}{
 1-e^{- 2 \theta \Delta_i}
 } y_{i-1}
  + \frac{e^{-\theta \Delta_{i+1} }
 }{
 1-e^{- 2 \theta \Delta_{i+1}}
 } y_{i+1}  
 }{
  \frac{1}{
 1-e^{- 2 \theta \Delta_i}
 }
  + \frac{e^{- 2 \theta \Delta_{i+1} }
 }{
 1-e^{- 2 \theta \Delta_{i+1}}
 } 
 }
 \right]^2 + \delta_n(\theta) \Bigg \},
\end{eqnarray*}
where $\delta_n(\theta)$ does not depend on $\sigma^2$ and satisfies $\sup_{a \leq \theta \leq A} | \delta_n(\theta) | =O_\P(1) $.
Since $\check{\theta} \check{\sigma}^2 \to \theta_0 \sigma_0^2$ a.s. from Theorem \ref{theorem:consistency:with:mean}, and since $aB > \theta_0 \sigma_0^2; Ab < \theta_0 \sigma_0^2$, we have $ [ \partial / \partial \sigma^2 ] \bar{S_n} (\check{\theta} , \check{\sigma}^2 ) = 0 $ for $n$ large enough almost surely. Hence we obtain for $n$ large enough almost surely,

\begin{eqnarray*}
0 & = & \frac{n}{ \check{\sigma}^2 } 
- \frac{1}{\check{\sigma}^4} 
\Bigg \{
 \frac{ (y_1 - e^{- \check{\theta} \Delta_2} y_2 )^2 }{ (1-e^{-2 \check{\theta} \Delta_2})}    + \frac{ (y_n - e^{- \check{\theta} \Delta_n} y_{n-1} )^2 }{  (1-e^{-2 \check{\theta} \Delta_n}) }
 \\
 &  &  +  \sum_{i=2}^{n-1} 
 \left[
  \frac{1}{
 1-e^{- 2 \check{\theta} \Delta_i}
 }
  + \frac{e^{- 2 \check{\theta} \Delta_{i+1} }
 }{
 1-e^{- 2 \check{\theta} \Delta_{i+1}}
 } 
 \right]
 \left[
 y_i
 -
 \frac{
   \frac{e^{- \check{\theta} \Delta_i}}{
 1-e^{- 2 \check{\theta} \Delta_i}
 } y_{i-1}
  + \frac{e^{-\check{\theta} \Delta_{i+1} }
 }{
 1-e^{- 2 \check{\theta} \Delta_{i+1}}
 } y_{i+1}  
 }{
  \frac{1}{
 1-e^{- 2 \check{\theta} \Delta_i}
 }
  + \frac{e^{- 2 \check{\theta} \Delta_{i+1} }
 }{
 1-e^{- 2 \check{\theta} \Delta_{i+1}}
 } 
 }
 \right]^2 + \delta_n(\check{\theta}) \Bigg \}.
\end{eqnarray*}

As noted at the end of the proof of Theorem \ref{th:asymptotic}, using identical techniques as for proving Theorem \ref{th:asymptotic}, one can finish the proof of Theorem \ref{theorem:AN:with:mean}.

\subsection{Proofs of the lemmas in Section \ref{subsection:proof:with:mean}}

\begin{proof}[Proof of Lemma \ref{lem:expression:barSn}]
We have, with  $\y_{-i} = (y_1,...,y_{i-1},y_{i+1},...,y_n)'$,
\begin{eqnarray*}
 Z(s_i) -	\hat{Z}_{\theta,-i}(s_i) 
 &=& y_i + \f_i' \bbeta_0  -\f_i' \hat{\bbeta}_{-i}  - \r_{\theta,-i}' \R_{\theta,-i}^{-1} ( \y_{-i} + \F_{-i} \bbeta_0 -\F_{-i} \hat{\bbeta}_{-i} )\\
  &=&y_i -\r_{\theta,-i}' \R_{\theta,-i}^{-1} \y_{-i} + (\f_i' -\r_{\theta,-i}' \R_{\theta,-i}^{-1} \F_{-i}) ( \bbeta_0-\hat{\bbeta}_{-i})\\
 &=&y_i -\hat Y_{\theta,-i}(s_i) + \epsilon_{-i}.
\end{eqnarray*}
Also 
\[
(\Q_{\theta}^{-})_{ii}=\e_{i,n}'\R_{\theta}^{-1} \e_{i,n} -\e_{i,n}'\R_{\theta}^{-1}\F ( \F' \R_{\theta}^{-1} \F)^{-1} \F' \R_{\theta}^{-1}e_{i,n}
= (\R_{\theta}^{-1})_{ii} - \bar{\epsilon}_{-i}.
\]
Then, from \eqref{eq:hat:sigma:pred:with:mean},

\begin{eqnarray*}
\bar{S}_n(\theta,\sigma^2)
&=& \sum_{i=1}^n\left[\log( \check{\sigma}^2_{ \theta,\sigma^2,-i}(s_i) ) + \frac{ (z_i -\hat{Z}_{\theta,-i}(s_i) )^2 }{ \check{\sigma}^2_{ \theta,\sigma^2,-i}(s_i) }\right]
\\
&=& n\log (\sigma^2) + 
\sum_{i=1}^n \left[ \log \left(\frac{1}{(\Q_{\theta}^{-})_{ii}}\right)\right] +
\frac{1}{\sigma^2}\sum_{i=1}^n (\Q_{\theta}^{-})_{ii}[y_i -\hat Y_{\theta,-i}(s_i) +\epsilon_{-i}]^2
\\
&=& S_n(\theta,\sigma^2) - \sum_{i=1}^n \left[ \log \left((\R_{\theta}^{-1})_{ii} - \bar{\epsilon}_{-i}\right) - \log((\R_{\theta}^{-1})_{ii} )\right] + \frac{1}{\sigma^2} \sum_{i=1}^n 
(\R_{\theta}^{-1})_{ii} \epsilon_{-i}^2 
\\
&&+ \frac{2}{\sigma^2} \sum_{i=1}^n (\R_{\theta}^{-1})_{ii}\epsilon_{-i} (y_i - \hat{Y}_{\theta,-i}(s_i))
- \frac{1}{\sigma^2}  \sum_{i=1}^n \bar{\epsilon}_{-i} (y_i - \hat{Y}_{\theta,-i}(s_i) + \epsilon_{-i})^2
\\
&=& S_n(\theta,\sigma^2) - r_1(\theta) + \frac{r_2(\theta) + 2 r_3(\theta) - r_4(\theta)}{\sigma^2}.
\end{eqnarray*}
\end{proof}

Before proving Lemma \ref{lem:control:rest}, we state and prove some intermediary results. 

\begin{lemma} \label{lem:unt:Rmun:y}
We have
\[
|| \F' \R_{\theta}^{-1} \y || \leq K \sup_{t \in [0,1]} | Y(t) |.
\]
\end{lemma}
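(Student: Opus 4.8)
The plan is to exploit the bidiagonal (Cholesky-type) factorization of the precision matrix coming from the Markov property, and then to gain an extra order of smallness through a summation by parts. Let $\boldsymbol{L}$ be the lower bidiagonal matrix with $\boldsymbol{L}_{ii}=1$ and $\boldsymbol{L}_{i,i-1}=-e^{-\theta\Delta_i}$, and let $\boldsymbol{D}=\mathrm{diag}(1,1-e^{-2\theta\Delta_2},\dots,1-e^{-2\theta\Delta_n})$. One checks directly against \eqref{eq:tridiagonal:inverse} that $\R_{\theta}^{-1}=\boldsymbol{L}'\boldsymbol{D}^{-1}\boldsymbol{L}$. Since $(\boldsymbol{L}\y)_i=W_i$ (with $W_1=y_1$) and $p$ is fixed, it suffices to bound each coordinate of $\F'\R_\theta^{-1}\y$, namely $(\f^{(k)})'\R_{\theta}^{-1}\y=\sum_{i=1}^n c_i W_i$, by $K\sup_{t}|Y(t)|$, where
\[
c_i\defeq\frac{(\boldsymbol{L}\f^{(k)})_i}{\boldsymbol{D}_{ii}}=\frac{f_k(s_i)-e^{-\theta\Delta_i}f_k(s_{i-1})}{1-e^{-2\theta\Delta_i}}\quad(i\geq2),\qquad c_1=f_k(s_1).
\]

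The naive bound $|\sum_i c_iW_i|\leq\sum_i|c_i|\,|W_i|$ only yields an $O(n)$ estimate, since each $c_i$ is of order one and $|W_i|\leq2\sup_t|Y(t)|$; the whole point is therefore to extract cancellation. First I would record the pointwise behaviour of the coefficients. A second-order Taylor expansion of $f_k$ (this is where the $C^2$ assumption enters), combined with $e^{-\theta\Delta_i}=1-\theta\Delta_i+O(\Delta_i^2)$ and $1-e^{-2\theta\Delta_i}=2\theta\Delta_i(1+O(\Delta_i))$, gives, uniformly in $\theta\in[a,A]$,
\[
c_i=g(s_i)+O(\Delta_i),\qquad g(t)\defeq\frac{f_k'(t)+\theta f_k(t)}{2\theta},
\]
where $g$ is bounded and Lipschitz on $[0,1]$, uniformly in $\theta$ (because $f_k,f_k',f_k''$ are bounded and $\theta$ is bounded away from $0$). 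In particular all the $c_i$ are uniformly bounded.

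Next I would perform an Abel summation. Using $W_i=y_i-e^{-\theta\Delta_i}y_{i-1}$ and reindexing, one obtains
\[
\sum_{i=1}^n c_iW_i=(c_1-c_2e^{-\theta\Delta_2})y_1+c_ny_n+\sum_{i=2}^{n-1}\big(c_i-c_{i+1}e^{-\theta\Delta_{i+1}}\big)y_i .
\]
The two boundary terms are $O(\sup_t|Y(t)|)$ since the $c_i$ are bounded. For the main sum, combining $c_i=g(s_i)+O(\Delta_i)$, $c_{i+1}e^{-\theta\Delta_{i+1}}=g(s_{i+1})+O(\Delta_{i+1})$ and the Lipschitz bound $|g(s_i)-g(s_{i+1})|\leq K\Delta_{i+1}$ yields the crucial estimate
\[
\big|c_i-c_{i+1}e^{-\theta\Delta_{i+1}}\big|\leq K(\Delta_i+\Delta_{i+1}),
\]
so that, with $|y_i|\leq\sup_t|Y(t)|$ and $\sum_i\Delta_i=1$ from \eqref{ass:delta_sum},
\[
\Big|\sum_{i=2}^{n-1}(c_i-c_{i+1}e^{-\theta\Delta_{i+1}})y_i\Big|\leq K\sup_t|Y(t)|\sum_{i=2}^{n-1}(\Delta_i+\Delta_{i+1})\leq K\sup_t|Y(t)|.
\]
Summing the resulting bounds over $k=1,\dots,p$ and taking the square root gives the claim. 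The main obstacle is exactly the telescoping estimate $|c_i-c_{i+1}e^{-\theta\Delta_{i+1}}|=O(\Delta_i+\Delta_{i+1})$: it is what turns the divergent $O(n)$ bound into an $O(1)$ one, and it is here that the $C^2$ regularity of $f_k$ (needed to control $c_i$ up to an $O(\Delta_i)$ error, rather than a mere modulus of continuity) and the uniformity in $\theta\in[a,A]$ must be tracked carefully.
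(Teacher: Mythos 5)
Your proof is correct and follows essentially the same route as the paper's: the paper also reduces $\f' \R_{\theta}^{-1} \y$ to the innovation form $\sum_i \bigl(f_i - e^{-\theta\Delta_i}f_{i-1}\bigr)\bigl(y_i - e^{-\theta\Delta_i}y_{i-1}\bigr)/\bigl(1-e^{-2\theta\Delta_i}\bigr)$ (your $\sum_i c_i W_i$) and then gains the decisive order of smallness through summation by parts combined with the $C^2$ regularity of $f_k$. The only difference is organizational: you perform a single Abel summation against the combined coefficient $g = (f_k' + \theta f_k)/(2\theta)$, whereas the paper first splits each summand into a difference-quotient term and a telescoping term and sums by parts each of the two resulting sums separately.
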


\begin{proof}[Proof of Lemma \ref{lem:unt:Rmun:y}]
From \eqref{eq:tridiagonal:inverse} we have, with $k=1,...,p$ and with $\f = (f_1,...,f_n)' = \f^{(k)}$,
\begin{eqnarray*}
[ \F' \R_{\theta}^{-1} \y ]_{k} & = &
\f' \R_{\theta}^{-1} \y  \\
 &=& 
\frac{1}{1-e^{-2\theta\Delta_2}} f_1y_1
+ \frac{1}{1-e^{-2\theta\Delta_n}} f_n y_n 
 - \sum_{i=2}^{n} 
 \frac{e^{-\theta\Delta_i}}{1-e^{-2\theta\Delta_i}}f_i y_{i-1}
- \sum_{i=2}^{n} 
 \frac{e^{-\theta\Delta_i}}{1-e^{-2\theta\Delta_i}}f_{i-1} y_{i} \\
& & + \sum_{i=2}^{n-1} 
\left(
\frac{1}{1-e^{-2\theta\Delta_i}}
+ \frac{e^{-2 \theta \Delta_{i+1}}}{1-e^{-2\theta\Delta_{i+1}}}
\right)
f_i y_i \\
& = & 
\frac{
f_1y_1 - f_2y_1 e^{- \theta \Delta_2}
- f_1y_2 e^{- \theta \Delta_2} + f_2 y_2
}
{
1-e^{-2\theta\Delta_2}
}
\\
& & +
\sum_{i=3}^{n} 
\frac{
f_iy_i - f_iy_{i-1} e^{- \theta \Delta_i}
- f_{i-1} y_{i} e^{- \theta \Delta_i} 
+ e^{- 2 \theta \Delta_i} f_{i-1} y_{i-1}
}
{
1-e^{-2\theta\Delta_i}
} \\
& = &
a_1 + \sum_{i=3}^{n} m_i,
\end{eqnarray*}
say. We have, using \eqref{eq:DL:alpha}, and that $f_k$ is continuously differentiable,
\begin{eqnarray*}
a_1 & = & \left(  \frac{1}{2 \theta \Delta_2} + \delta_{1,n}  \right)
\left(
f_1 y_1 ( 1 - e^{-\theta \Delta_2} ) +
(f_1 - f_2) y_1 e^{- \theta \Delta_2}
+ y_2 f_2(1-e^{- \theta \Delta_2}) + e^{- \theta \Delta_2} y_2 (f_2 - f_1)
\right) \\
& = &
\frac{1}{2} ( f_1 y_1 + y_2 f_2)
+ \frac{f_1-f_2}{2 \theta \Delta_2} (y_1 - y_2) + \delta_{1,n} \Delta_2 \sup_{t \in [0,1]} |Y(t)|
\end{eqnarray*}
and
\begin{eqnarray*}
m_i & = &
 y_i 
 \left(
\frac{
f_i - e^{- \theta \Delta_i} f_{i-1}
}{
1 - e^{-2 \theta \Delta_i}
}
\right)
+
 y_{i-1}
 \left(
\frac{
e^{-2 \theta \Delta_i}f_{i-1}  -
 e^{- \theta \Delta_i} f_{i}
}{
1 - e^{-2 \theta \Delta_i}
} 
\right)
\\
& = &
 y_i 
 \left(
\frac{
f_i - f_{i-1} + f_{i-1} ( 1 - e^{- \theta \Delta_i })
}{
1 - e^{-2 \theta \Delta_i}
}
\right)
+
 y_{i-1}
 \left(
\frac{
f_{i-1} ( e^{- 2 \theta \Delta_i } - e^{- \theta \Delta_i })
+
(f_{i-1} - f_{i}) e^{- \theta \Delta_i}
}{
1 - e^{-2 \theta \Delta_i}
}
\right) \\
& = &
(y_i - y_{i-1}) \frac{f_i - f_{i-1}}{2 \theta \Delta_i}
+ \frac{1}{2} ( y_i - y_{i-1} )  f_{i-1}
+ \delta_{i,n} \Delta_i \sup_{t \in [0,1]} |Y(t)|.
\end{eqnarray*}
Hence, using that $f_k$ is continuously differentiable,
\begin{eqnarray} \label{eq:before:sum:by:part}
\f' \R_{\theta}^{-1} \y
& = &
\frac{1}{2} ( f_1 y_1 + y_2 f_2)
+ \frac{f_1-f_2}{2 \theta \Delta_2} (y_1 - y_2)
+ \delta_{1,n} \Delta_2 \sup_{t \in [0,1]} |Y(t)| \notag \\
& & + 
\sum_{i=3}^{n}
\left(
(y_i - y_{i-1}) \frac{f_i - f_{i-1}}{2 \theta \Delta_i}
+ \frac{1}{2} ( y_i - y_{i-1} )  f_{i-1}
+ \delta_{i,n} \Delta_i \sup_{t \in [0,1]} |Y(t)|
\right) \notag \\
& = &
\delta_{1,n} \sup_{t \in [0,1]} |Y(t)|
+ 
\sum_{i=3}^{n-1}
\left(
\frac{f_i - f_{i-1}}{2 \theta  \Delta_i}
(y_i - y_{i-1})
\right)
+\frac{1}{2}
\sum_{i=3}^{n-1}
\left(
y_i  - y_{i-1} 
\right)  f_{i-1}.
\end{eqnarray}
The absolute value of first sum in \eqref{eq:before:sum:by:part} is equal to, after a summation by part, and using that $f_k$ is twice continuously differentiable,
\begin{eqnarray*}
& & \left |- \sum_{i=3}^{n-1}
y_i
\left(
\frac{f_{i+1} - f_{i}}{2 \theta  \Delta_{i+1}}
-
\frac{f_{i} - f_{i-1}}{2 \theta  \Delta_{i}}
\right)
+ \delta_{n,n} \sup_{t \in [0,1]} |Y(t)|  \right |\\
& \leq & 
 \frac{1}{2 \theta} \left | \sum_{i=3}^{n-1}
y_i
\left(
f'(s_i) + \delta_{i,n} \Delta_{i+1}
-
f'(s_i) + \delta_{i,n} \Delta_{i}
\right)\right |
+ \delta_{n,n} \sup_{t \in [0,1]} |Y(t)| \\
& = & \delta_{n,n} \sup_{t \in [0,1]} |Y(t)|.
\end{eqnarray*}
The second sum in \eqref{eq:before:sum:by:part} is equal to, using that $f_k$ is continuously differentiable,
\[
\sum_{i=3}^{n-1}
\left(
y_i f_{i-1} - y_{i-1} f_{i-2}
+y_{i-1} (f_{i-2} - f_{i-1})
\right)
=
\delta_{n,n} \sup_{t \in [0,1]} |Y(t)|.
\]

Hence 
\[
| \f' \R_{\theta}^{-1} \y | \leq K \sup_{t \in [0,1]} | Y(t) |.
\]
\end{proof}

\begin{lemma} \label{lem:eint:Rmun:un}  
We have, for $k=1,...,p$,
\[
| \e_{i,n}' \R_{\theta}^{-1} \f^{(k)} | 
\leq 
\begin{cases}
K (\Delta_i + \Delta_{i+1}) & \mbox{when $i \not \in \{1,n \}$} \\
K  & \mbox{when $i \in \{1,n \}$}
\end{cases}.
\]
\end{lemma}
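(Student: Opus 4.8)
The plan is to exploit the tridiagonal structure of $\R_\theta^{-1}$ displayed in \eqref{eq:tridiagonal:inverse}, which makes $\e_{i,n}' \R_\theta^{-1} \f^{(k)}$ a combination of at most three consecutive values of $f_k$. Writing $f_j = f_k(s_j)$, for an interior index $2 \leq i \leq n-1$ only the entries in columns $i-1,i,i+1$ of row $i$ are nonzero, and after collecting the two terms that share a common denominator I obtain
\[
\e_{i,n}' \R_\theta^{-1} \f^{(k)} = \underbrace{\frac{f_i - e^{-\theta \Delta_i} f_{i-1}}{1-e^{-2\theta \Delta_i}}}_{=:\,T_1} + \underbrace{\frac{e^{-2\theta \Delta_{i+1}} f_i - e^{-\theta \Delta_{i+1}} f_{i+1}}{1-e^{-2\theta \Delta_{i+1}}}}_{=:\,T_2}.
\]
For the boundary indices $i \in \{1,n\}$ only two matrix entries survive, leaving a single term of the same shape as $T_1$ (built from $\Delta_2$, resp. $\Delta_n$).

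Next I would Taylor expand each group, using that $f_k$ is twice continuously differentiable, so that $f_k,f_k',f_k''$ are bounded on $[0,1]$ and $f_{i-1} = f_i - f_k'(s_i)\Delta_i + O(\Delta_i^2)$, $f_{i+1} = f_i + f_k'(s_i)\Delta_{i+1} + O(\Delta_{i+1}^2)$ with remainders uniform in $i$. Combining these with $e^{-\theta\Delta} = 1 - \theta\Delta + O(\Delta^2)$ and $1 - e^{-2\theta\Delta} = 2\theta\Delta\,(1 + O(\Delta))$ (all uniform for $\theta \in [a,A]$, and consistent with \eqref{eq:DL:alpha}), the numerator of $T_1$ equals $\Delta_i(f_k'(s_i) + \theta f_i) + O(\Delta_i^2)$, whence
\[
T_1 = \frac{f_k(s_i)}{2} + \frac{f_k'(s_i)}{2\theta} + O(\Delta_i).
\]
An identical computation for $T_2$, whose numerator equals $-\Delta_{i+1}(f_k'(s_i) + \theta f_i) + O(\Delta_{i+1}^2)$, gives
\[
T_2 = -\frac{f_k(s_i)}{2} - \frac{f_k'(s_i)}{2\theta} + O(\Delta_{i+1}).
\]
Adding the two, the $O(1)$ leading parts cancel exactly, and I am left with $|\e_{i,n}' \R_\theta^{-1} \f^{(k)}| = |T_1 + T_2| \leq K(\Delta_i + \Delta_{i+1})$, uniformly in $n$, $i$ and $\theta \in [a,A]$ (here $\theta$ is bounded away from $0$ by $a>0$, so the factors $1/(2\theta)$ stay bounded).

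For the boundary cases there is no second group to cancel the leading part: the single $T_1$-type term is merely bounded — for instance when $i=1$ it equals $\tfrac12 f_k(s_1) - \tfrac{1}{2\theta}f_k'(s_1) + O(\Delta_2)$, and symmetrically when $i=n$ — which yields the claimed $O(1)$ estimate. These computations run parallel to those already carried out for the terms $a_1$ and $m_i$ in the proof of Lemma \ref{lem:unt:Rmun:y}, so the same uniform bounds apply verbatim. The main obstacle, and the only genuinely delicate point, is the exact cancellation of the order-one parts of $T_1$ and $T_2$: taken individually each group is only $O(1)$, and the sharper bound $O(\Delta_i + \Delta_{i+1})$ rests entirely on the fact that both share the identical leading coefficient $\tfrac12 f_k(s_i) + \tfrac{1}{2\theta}f_k'(s_i)$. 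Capturing this forces me to push the expansions to first order with genuine $O(\Delta^2)$ remainders, which is exactly where the twice continuous differentiability of $f_k$ is needed to control the remainders uniformly in $i$; with only $C^1$ regularity one would recover $o(1)$ rather than $O(\Delta_i + \Delta_{i+1})$, in line with the comment following Theorem \ref{theorem:AN:with:mean}.
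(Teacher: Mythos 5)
Your proposal is correct and follows essentially the same route as the paper's own proof: the same tridiagonal decomposition into two groups sharing the denominators $1-e^{-2\theta\Delta_i}$ and $1-e^{-2\theta\Delta_{i+1}}$, the same Taylor expansions (using twice continuous differentiability of $f_k$ and uniformity in $\theta \in [a,A]$) showing each group equals $\pm\bigl(\tfrac{1}{2}f_k(s_i) + \tfrac{1}{2\theta}f_k'(s_i)\bigr)$ up to $O(\Delta)$, and the same exact cancellation of the leading parts for interior indices, with the boundary rows yielding only the $O(1)$ bound. Your identification of the cancellation as the crux, and of the $C^2$ hypothesis as what makes the remainders uniformly $O(\Delta^2)$, matches the paper's argument precisely.
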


\begin{proof}[Proof of Lemma \ref{lem:eint:Rmun:un}]
Let $\f = (f_1,...,f_n)' = \f^{(k)}$. 

i) When $i\notin \{1,n\}$, from \eqref{eq:tridiagonal:inverse},
\begin{eqnarray*}
\e_{i,n}' \R_{\theta}^{-1}\f  &=& 
- \frac{e^{-\theta\Delta_i}}{1-e^{-2\theta\Delta_i}} f_{i-1} +\frac{1}{1-e^{-2\theta\Delta_i}} f_i +\frac{e^{-2\theta\Delta_{i+1}}}{1-e^{-2\theta\Delta_{i+1}}} f_i - \frac{e^{-\theta\Delta_{i+1}}}{1-e^{-2\theta\Delta_{i+1}}} f_{i+1} \\
& = &
\frac{f_i(1 -e^{-\theta\Delta_i}) + e^{-\theta\Delta_i} (f_i - f_{i-1})}{1-e^{-2\theta\Delta_i}} 
+ \frac{f_i (e^{-2\theta\Delta_{i+1}} - e^{- \theta\Delta_{i+1}} ) + (f_i - f_{i+1})e^{-\theta\Delta_{i+1}} }{1-e^{-2\theta\Delta_{i+1}}} \\
& = & \frac{f_i}{2} + \frac{f'(s_i)}{2 \theta} + \delta_{i,n} \Delta_i 
- \frac{f_i}{2} - \frac{f'(s_i)}{2 \theta} + \delta_{i,n} \Delta_{i+1} \\
& = & \delta_{i,n} ( \Delta_i + \Delta_{i+1} ),
\end{eqnarray*}
where we have used \eqref{eq:DL:alpha} and that $f$ is twice continuously differentiable.

ii) Similarly,
\begin{eqnarray*}
 \e_{1,n}' \R_{\theta}^{-1}\f  
 = 
\frac{f(0)}{2} - 
\frac{f'(0)}{2 \theta} + \delta_{1,n} \Delta_2.
\end{eqnarray*}
and
\begin{eqnarray*}
 \e_{n,n}' \R_{\theta}^{-1}\f  
 = 
\frac{f(1)}{2} 
+
\frac{f'(1)}{2 \theta} + \delta_{n,n} \Delta_n.
\end{eqnarray*}
\end{proof}

\begin{lemma} \label{lem:unt:Rmun:un}
We have
\[
 \F' \R_{\theta}^{-1} \F 
= 
\I_f
+\W(\theta)
\]
with $\sup_{ \theta \in [a,A] } || \W(\theta)|| \to_{n \to \infty} 0$ and with $\I_f$ the $p \times p$ matrix defined by
\[
[\I_f]_{kl} =
f_k(0) f_l(0) + \frac{1}{2 \theta} \int_{0}^1  f_k'(t) f_l'(t) dt
+ \frac{\theta}{2} \int_0^1 f_k(t) f_l(t) dt
+ \frac{1}{2} \int_0^1 ( f_k'(t) f_l(t) + f_k(t) f_l'(t)) dt.
\]
Furthermore, $\I_f$ is invertible. 
\end{lemma}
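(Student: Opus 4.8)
The plan is to compute the $(k,l)$ entry of $\F'\R_{\theta}^{-1}\F$, namely $(\f^{(k)})'\R_{\theta}^{-1}\f^{(l)}$, in closed form using the explicit tridiagonal inverse \eqref{eq:tridiagonal:inverse}, and then to recognise the limit as a Riemann sum. Writing $\f=\f^{(k)}=(f_1,\dots,f_n)'$ and $\g=\f^{(l)}=(g_1,\dots,g_n)'$ with $f_i=f_k(s_i)$, $g_i=f_l(s_i)$, and collecting the diagonal and sub/super-diagonal contributions of \eqref{eq:tridiagonal:inverse} exactly as in the proof of Lemma \ref{lem:unt:Rmun:y}, I expect the entry to collapse, after factorisation, to the clean form
\[
(\f^{(k)})'\R_{\theta}^{-1}\f^{(l)}
= f_1 g_1 + \sum_{i=2}^{n} \frac{(f_i - e^{-\theta\Delta_i}f_{i-1})(g_i - e^{-\theta\Delta_i}g_{i-1})}{1-e^{-2\theta\Delta_i}}.
\]
The key algebraic observation making this work is that, for each $i$, the two diagonal pieces $\tfrac{1}{1-e^{-2\theta\Delta_i}}$ (at level $i$) and $\tfrac{e^{-2\theta\Delta_i}}{1-e^{-2\theta\Delta_i}}$ (at level $i-1$) together with the off-diagonal piece $-\tfrac{e^{-\theta\Delta_i}}{1-e^{-2\theta\Delta_i}}$ factorise as $(1-e^{-2\theta\Delta_i})^{-1}(f_i-e^{-\theta\Delta_i}f_{i-1})(g_i-e^{-\theta\Delta_i}g_{i-1})$, the single leftover boundary term being $f_1g_1$ (recall $s_1=0$).

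Next I would Taylor expand each factor. Since $f_k$ and $f_l$ are twice continuously differentiable, $f_{i-1}=f_i-\Delta_i f_k'(s_i)+O(\Delta_i^2)$ and $e^{-\theta\Delta_i}=1-\theta\Delta_i+O(\Delta_i^2)$ give $f_i-e^{-\theta\Delta_i}f_{i-1}=\Delta_i(f_k'(s_i)+\theta f_i)+O(\Delta_i^2)$, uniformly in $\theta\in[a,A]$, and similarly for $g$; together with $1-e^{-2\theta\Delta_i}=2\theta\Delta_i(1+O(\Delta_i))$ this yields
\[
\frac{(f_i - e^{-\theta\Delta_i}f_{i-1})(g_i - e^{-\theta\Delta_i}g_{i-1})}{1-e^{-2\theta\Delta_i}}
= \frac{\Delta_i}{2\theta}\bigl(f_k'(s_i)+\theta f_i\bigr)\bigl(f_l'(s_i)+\theta g_i\bigr) + O(\Delta_i^2).
\]
Summing, the error contributes $\sum_i O(\Delta_i^2)\le K\max_i\Delta_i\to0$ by \eqref{ass:delta} and $\sum_i\Delta_i=1$, while the main term is a right-endpoint Riemann sum converging to $\tfrac{1}{2\theta}\int_0^1(f_k'+\theta f_k)(f_l'+\theta f_l)\,dt$. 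Adding $f_1g_1=f_k(0)f_l(0)$ and expanding the product under the integral recovers exactly $[\I_f]_{kl}$. I would then set $\W(\theta)=\F'\R_{\theta}^{-1}\F-\I_f$; since $p$ is fixed and each of the finitely many entries converges to zero uniformly in $\theta$, the bound $\sup_{\theta\in[a,A]}\|\W(\theta)\|\to0$ follows by equivalence of matrix norms.

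For invertibility I would show $\I_f$ is positive definite for every $\theta>0$. For $v\in\mathbb{R}^p$ set $h=\sum_k v_kf_k$; by linearity $v'\I_f v = h(0)^2+\tfrac{1}{2\theta}\int_0^1(h')^2+\tfrac12\int_0^1(h'h+hh')+\tfrac{\theta}{2}\int_0^1 h^2$, and integrating the cross term by parts, $\tfrac12\int_0^1(h^2)'=\tfrac12(h(1)^2-h(0)^2)$, gives $v'\I_f v=\tfrac12 h(0)^2+\tfrac12 h(1)^2+\tfrac{1}{2\theta}\int_0^1(h')^2+\tfrac{\theta}{2}\int_0^1 h^2\ge0$. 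Equality forces $\int_0^1 h^2=0$, hence $h\equiv0$ by continuity, hence $v=0$ by the linear independence of $f_1,\dots,f_p$; thus $\I_f$ is positive definite, in particular invertible.

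The main obstacle I anticipate is the first step: carrying out the exact summation of the diagonal and off-diagonal contributions of \eqref{eq:tridiagonal:inverse}, verifying the factorisation, and doing the precise boundary bookkeeping—only $f_1g_1$ survives, with no symmetric $f_ng_n$ term, the right-endpoint value being absorbed into the cross-derivative integral after the integration by parts above. Once this identity is in hand, the uniformity in $\theta$ of the Riemann-sum convergence, controlled by the joint continuity of the integrand on the compact $[0,1]\times[a,A]$ and by the uniform $O(\Delta_i^2)$ remainders afforded by the $C^2$ hypothesis, is routine, and the positive-definiteness argument is self-contained.
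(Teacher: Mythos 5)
Your proposal is correct and follows essentially the same route as the paper's proof: the same tridiagonal-inverse computation collapsing to the Markov-factorized quadratic form (your identity $f_1g_1+\sum_{i=2}^n (1-e^{-2\theta\Delta_i})^{-1}(f_i-e^{-\theta\Delta_i}f_{i-1})(g_i-e^{-\theta\Delta_i}g_{i-1})$ is exactly the paper's expression, which merely keeps the $i=2$ term unfactored), the same uniform Taylor expansion with $O(\Delta_i^2)$ remainders, and the same Riemann-sum limit giving $[\I_f]_{kl}$. The only (minor) variation is the invertibility step: you integrate the cross term by parts to obtain $\tfrac12 h(0)^2+\tfrac12 h(1)^2+\tfrac{1}{2\theta}\int_0^1 (h')^2+\tfrac{\theta}{2}\int_0^1 h^2$ and conclude from $\int_0^1 h^2=0$, whereas the paper completes the square as $h(0)^2+\tfrac12\int_0^1\bigl(\theta^{-1/2}h'+\theta^{1/2}h\bigr)^2\,dt$ and rules out nonzero $h$ via the ODE $h'=-\theta h$ with $h(0)=0$; both correctly yield positive definiteness.
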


\begin{proof}[Proof of Lemma \ref{lem:unt:Rmun:un}]
Let $k,l \in \{1,...,p\}$ and let $g=f_k$, $h = f_l$, $\g = (g_1,...,g_n)' = \f^{(k)}$ and $\h = (h_1,...,h_n)' = \f^{(l)}$. From \eqref{eq:tridiagonal:inverse}, we have,
\begin{eqnarray} \label{eq:obtaining:If}
[\F' \R_{\theta}^{-1} \F ]_{kl}  & = &
 \g' \R_{\theta}^{-1} \h \notag \\
 & = & \frac{g_1 h_1}{1-e^{-2\theta\Delta_2}} 
+ \frac{g_n h_n}{1-e^{-2\theta\Delta_n}}  
 -\sum_{i=2}^{n} \frac{e^{-\theta\Delta_i}}{1-e^{-2\theta\Delta_i}} g_i h_{i-1} 
 -\sum_{i=2}^{n} \frac{e^{-\theta\Delta_i}}{1-e^{-2\theta\Delta_i}} g_{i-1} h_{i} \notag \\
& & + 
\sum_{i=2}^{n-1} \left( \frac{1}{1-e^{-2\theta\Delta_{i}}} +\frac{e^{-2\theta\Delta_{i+1}}}{1-e^{-2\theta\Delta_{i+1}}} \right) g_i h_i \notag \\
& = & 
\frac{g_1 h_1 -   e^{- \theta \Delta_2} ( g_2 h_1 + g_1 h_2) + g_2 h_2}{1 - e^{- 2 \theta \Delta_2}}
+
\sum_{i=3}^{n} 
\left( 
\frac{ \left( g_i -  e^{-  \theta \Delta_i} g_{i-1} \right) 
\left( h_i -  e^{-  \theta \Delta_i} h_{i-1} \right)
}{
1 - e^{- 2 \theta \Delta_i}}
\right) \notag \\
& = &
\frac{ (g_1 - g_2)(h_1 - h_2) + \theta \Delta_2 (g_2 h_1 + g_1 h_2) + \delta_{1,n} \Delta_2^2 }{2 \theta \Delta_2 + \delta_{1,n} \Delta_2^2} \notag \\
& & +
\sum_{i=3}^{n} 
\frac{1}{1 - e^{- 2 \theta \Delta_i}} 
\Big(
 (g_i - g_{i-1}) (h_i - h_{i-1}) + g_{i-1} (1 - e^{- \theta \Delta_i}) h_{i-1} (1 - e^{- \theta \Delta_i}) 
  \\
 & & + g_{i-1} (1 - e^{- \theta \Delta_i}) (h_i - h_{i-1}) + (g_i - g_{i-1})  h_{i-1} (1 - e^{- \theta \Delta_i}) 
\Big), \notag
\end{eqnarray}
where we have used \eqref{eq:DL:alpha}.
Since $f_k$ and $f_l$ are continuously differentiable we have 
\[
\frac{ (g_1 - g_2)(h_1 - h_2) + \theta \Delta_2 (g_2 h_1 + g_1 h_2) }{2 \theta \Delta_2 + \delta_{1,n} \Delta_2^2}
= g(0) h(0) + \delta_{1,n} \Delta_2.
\]
Also, the element $i$ of the sum in \eqref{eq:obtaining:If} is equal to
\begin{eqnarray*}
& & \frac{1}{2 \theta} \frac{(g_i - g_{i-1})}{\Delta_i}
(h_i - h_{i-1})
+
g_{i-1} h_{i-1} \frac{\theta}{2} \Delta_i
+
\frac{1}{2} g_{i-1} (h_i - h_{i-1})
+
\frac{1}{2} h_{i-1} (g_i - g_{i-1})
+
\delta_{i,n} \Delta_i^2 \\
& = &
\frac{1}{2 \theta} g'(s_i)  h'(s_i) \Delta_i
+
\frac{\theta}{2} g(s_{i-1}) h(s_{i-1})  \Delta_i
+ \frac{1}{2} g(s_i) h'(s_i) \Delta_i 
+ \frac{1}{2} h(s_i) g'(s_i) \Delta_i 
+ \delta_{i,n} \Delta_i^2,
\end{eqnarray*}
since $f$ is twice continuously differentiable.
Hence we have
\begin{eqnarray*}
 \g' \R_{\theta}^{-1} \h
& = &
 g(0) h(0) + \delta_{1,n} \Delta_2 \\
 & &
 + \sum_{i=3}^{n}
 \left[
\Delta_i \left(
 \frac{1}{2 \theta} g'(s_i) h'(s_i)
+
 \frac{\theta}{2} g(s_{i-1}) h(s_{i-1})
+ \frac{1}{2} g'(s_i)h(s_i)
+ \frac{1}{2} g(s_i)h'(s_i)
  \right) +
  \delta_{i,n} \Delta_i^2
  \right] \\
&  = & [\I_f]_{kl} + w(\theta),
\end{eqnarray*}
where $\sup_{\theta \in [a,A]} |w(\theta)| \to_{n \to \infty} 0$,
by dominated convergence. Finally, we have, for $\lambda_1,...,\lambda_p \in \mathbb{R}$,
\[
\sum_{k,l = 1}^p \lambda_k \lambda_l [I_f]_{kl} =
\left(
\sum_{k = 1}^p \lambda_k f_k(0)
\right)^2
+
\frac{1}{2} \int_{0}^1
\left(
\frac{1}{\sqrt{\theta}} 
\sum_{k = 1}^p \lambda_k f_k'(t)
+
\sqrt{\theta} 
\sum_{k = 1}^p \lambda_k f_k(t)
\right)^2 dt.
\]	
Hence, if $\sum_{k,l = 1}^p \lambda_k \lambda_l [I_f]_{kl} =0 $, then $\sum_{k = 1}^p \lambda_k f_k(0)=0$ and for all $t \in [0,1]$, $ \sum_{k = 1}^p \lambda_k f_k'(t) = - \theta \sum_{k = 1}^p \lambda_k f_k(t)$ so that, by linear independence, $\lambda_1=...=\lambda_p = 0$. Hence $\I_f$ is invertible.

\end{proof}

\begin{lemma} \label{lem:y:minus:haty}
We have
\[
\max_{i=1,...,n} \sup_{a \leq \theta \leq A} | y_i - \hat{Y}_{\theta,-i}(s_i) | \leq K \sup_{t \in [0,1]}  | Y(t) |.
\]
\end{lemma}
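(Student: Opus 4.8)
The starting point is the exact expression for the leave-one-out residual. Since $\hat{Y}_{\theta,-i}(s_i)$ does not depend on $\sigma^2$, I set $\sigma^2 = 1$, so that $\alpha_i = 1/(1 - e^{-2\theta\Delta_i})$ and $(\R_{\theta}^{-1})_{ii} = A_i$ for $2 \leq i \leq n-1$. Combining \eqref{eq:loo:y:pred} with the tridiagonal inverse \eqref{eq:tridiagonal:inverse}, the residual equals $[\R_{\theta}^{-1}\y]_i / (\R_{\theta}^{-1})_{ii}$; for interior indices this is precisely identity \eqref{eq:y:minus:haty}, giving
\[
y_i - \hat{Y}_{\theta,-i}(s_i) = \frac{1}{A_i}\left(\alpha_i W_i - \alpha_{i+1} e^{-\theta\Delta_{i+1}} W_{i+1}\right), \qquad 2 \leq i \leq n-1.
\]
The two boundary indices are handled by direct inspection of \eqref{eq:tridiagonal:inverse}: the first and last rows each contain a single off-diagonal entry, so $y_1 - \hat{Y}_{\theta,-1}(s_1) = y_1 - e^{-\theta\Delta_2} y_2$ and $y_n - \hat{Y}_{\theta,-n}(s_n) = W_n$.

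Next I would bound the two coefficients in front of $W_i$ and $W_{i+1}$ uniformly. From $A_i = \alpha_i + \alpha_{i+1} e^{-2\theta\Delta_{i+1}}$ with both summands positive, one gets $\alpha_i / A_i \leq 1$ and, crucially, $A_i \geq \alpha_{i+1} e^{-2\theta\Delta_{i+1}}$, whence $\alpha_{i+1} e^{-\theta\Delta_{i+1}} / A_i \leq e^{\theta\Delta_{i+1}} \leq e^A \leq K$, using $\Delta_{i+1} \leq 1$ (the $\Delta_i$ are positive and sum to one) and $\theta \leq A$. Each increment satisfies $|W_i| = |y_i - e^{-\theta\Delta_i} y_{i-1}| \leq |y_i| + |y_{i-1}| \leq 2 \sup_{t \in [0,1]} |Y(t)|$. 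Putting these estimates together yields $\sup_{a \leq \theta \leq A} |y_i - \hat{Y}_{\theta,-i}(s_i)| \leq 2(1 + e^A) \sup_{t} |Y(t)|$ for interior $i$, and a cruder version of the same bound for $i \in \{1,n\}$; since the constant is independent of $i$ and of $\theta$, taking the maximum over $i$ finishes the proof.

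The only delicate point — and the one I would flag as the main obstacle — is the control of the coefficient $\alpha_{i+1} e^{-\theta\Delta_{i+1}}/A_i$, because $\alpha_{i+1}$ blows up like $1/\Delta_{i+1}$ as the spacing $\Delta_{i+1}$ shrinks, and the designs considered in the paper do force some $\Delta_{i+1}$ to be arbitrarily small. The inequality $A_i \geq \alpha_{i+1} e^{-2\theta\Delta_{i+1}}$ is exactly what neutralizes this blow-up, trading the large factor $\alpha_{i+1}$ against itself and leaving only the harmless factor $e^{\theta\Delta_{i+1}}$. Everything else is routine bookkeeping, and no Taylor expansion or probabilistic input is needed here: the estimate is purely deterministic and pathwise in $Y$, which is also why it will later combine cleanly with \eqref{ass:y_bounded}.
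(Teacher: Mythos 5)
Your proof is correct and follows essentially the same route as the paper's: both use the exact residual identity \eqref{eq:y:minus:haty} for interior indices, the explicit first and last rows of \eqref{eq:tridiagonal:inverse} for $i \in \{1,n\}$, and then a uniform bound on the two coefficients combined with $|W_i| \leq 2 \sup_{t \in [0,1]} |Y(t)|$. The only difference is presentational: the paper absorbs the coefficient bounds into its $\delta_{i,n}$ notation without comment, whereas you justify them explicitly through the elementary inequalities $\alpha_i / A_i \leq 1$ and $A_i \geq \alpha_{i+1} e^{-2\theta\Delta_{i+1}}$, which is exactly the right way to make that step rigorous.
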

\begin{proof}[Proof of Lemma \ref{lem:y:minus:haty}]

For $i \not \in \{ 1,...,n\}$, using \eqref{eq:y:minus:haty},
\begin{eqnarray*}
y_i - \hat{Y}_{\theta,-i}(s_i)  
& = & 
\frac{1}{A_i} ( \alpha_i W_i - \alpha_{i+1} e^{- \theta \Delta_{i+1}} W_{i+1} ) \\
& = & 
\delta_{i,n} W_i + \delta_{i,n} W_{i+1} \\
& = & 
\delta_{i,n} \sup_{t \in [0,1]}  | Y(t) |.
\end{eqnarray*} 
Also
\[
y_1 - \hat{Y}_{\theta,-1}(s_1)  
=
y_1 - e^{- \theta \Delta_2} y_2
= \delta_{1,n} \sup_{t \in [0,1]}  | Y(t) |
\]
and similarly $y_n - \hat{Y}_{\theta,-n}(s_n)  
= \delta_{n,n} \sup_{t \in [0,1]}  | Y(t) |$.
\end{proof}

We now prove Lemma \ref{lem:control:rest}.

\begin{proof}[Proof of Lemma \ref{lem:control:rest}]

$\bullet$ \textbf{Term $r_1(\theta) $}:
\begin{eqnarray*}
r_1(\theta) &=& \sum_{i=1}^n \left[ \log \left((\R_{\theta}^{-1})_{ii} - \bar{\epsilon}_{-i} \right) - \log((\R_{\theta}^{-1})_{ii} )\right] = \sum_{i=1}^n  \log \left(1 - \frac{ \bar{\epsilon}_{-i}}{(\R_{\theta}^{-1})_{ii}}\right)
\\
&=& \sum_{i=1}^n  \log \left( 1 - \frac{ (\e_{i,n}'\R_{\theta}^{-1}\F) (\F' \R_{\theta}^{-1} \F)^{-1} (\F' \R_{\theta}^{-1} \e_{i,n}) }{(\R_{\theta}^{-1})_{ii}}\right)
\\
&=& \sum_{i=1}^n  \log \left( 1 - \frac{\delta_{i,n}}{(\R_{\theta}^{-1})_{ii}}\right),
\end{eqnarray*}
from Lemmas \ref{lem:eint:Rmun:un} and \ref{lem:unt:Rmun:un}. 

Now, from \eqref{eq:tridiagonal:inverse}, for $i=2, \dots, n$, we have $\sup_{\theta}  \frac{1}{(\R_{\theta}^{-1})_{ii}} = \delta_{i,n} \Delta_i$ and $\sup_{\theta}  \frac{1}{(\R_{\theta}^{-1})_{11}} = \delta_{1,n} \Delta_2$. Hence
$r_1(\theta) =  \log \left( 1 - \delta_{1,n}\Delta_2\right) + \sum_{i=2}^n  \log \left( 1 - \delta_{i,n}\Delta_i\right)$ so that $\sup_{\theta \in [a,A]} |r_1(\theta)|$ is bounded as $n \to \infty$.\\

$\bullet$ \textbf{Term $r_2(\theta) $}:

For $k \in \mathbb{N}$, let $\f^{(k)}_{-i}$ be obtained by removing component $i$ of $\f^{(k)}$.
We observe that $f_k(s_i) - \r_{\theta,-i}' \R_{\theta,-i}^{-1} \f^{(k)}_{-i}$ can be interpreted as a leave-one-out prediction error for a $n$-dimensional observation vector equal to $\f^{(k)}$. Hence from \eqref{eq:loo:y:pred},
\begin{eqnarray*}
r_2(\theta) &=&\sum_{i=1}^n (\R_{\theta}^{-1})_{ii} \epsilon_{-i}^2 
\\
&=&\sum_{k,l=1}^p 
\left[
\sum_{i=1}^n 
 \frac{1 }{\left[ (\R_{\theta}^{-1})_{ii}\right] }
 (\bbeta_0 - \hat \bbeta_{-i})_k (\bbeta_0 - \hat \bbeta_{-i})_l 
\left( \e_{i,n}' \R_{\theta}^{-1} \f^{(k)} \right)
\left( \e_{i,n}' \R_{\theta}^{-1} \f^{(l)} \right)
\right].
\end{eqnarray*}
For $i \in \{ 1,...,n \}$, one can show that  Lemmas \ref{lem:unt:Rmun:y} and \ref{lem:unt:Rmun:un} remain true with $\F$, $\R_\theta$, $\y$ replaced by $\F_{-i}$, $\R_{\theta,-i}$, $\y_{-i}$. In addition, these (modified) lemmas can be shown to be uniform over $i=1,...,n$. As a consequence, we have, for $i=1, \dots, n$, 
 \begin{equation} \label{eq:bound:hat:beta}
 || \hat \bbeta_{-i}- \bbeta_0 ||
  =
  || (\F_{-i}' \R_{\theta,-i}^{-1} \F_{-i})^{-1}\F_{-i}' \R_{\theta,-i}^{-1}\y_{-i} ||
   \leq K \sup_{t \in [0,1]} | Y(t) |.
 \end{equation}
Also, since from \eqref{eq:tridiagonal:inverse} $\max_{i=1,...,n} \frac{1}{ (\R_{\theta}^{-1})_{ii} } \leq K$, we have 
\[
 \sum_{i=1}^n \frac{1}{(\R_{\theta}^{-1})_{ii} } 
 \left| \e_{i,n}' \R_{\theta}^{-1} \f^{(k)} \right| 
  \left| \e_{i,n}' \R_{\theta}^{-1} \f^{(l)} \right|
  \leq
 K + \sum_{i=2}^{n-1} \delta_{i,n} (\Delta_i + \Delta_{i+1})^2, 
\]
from Lemma \ref{lem:eint:Rmun:un}.
Hence, we have 
\begin{eqnarray*}
r_2(\theta) &\leq & K \sup_{t \in [0,1]}  Y(t)^2 \left( K+\sum_{i=2}^{n-1}(\Delta_i + \Delta_{i+1})^2
\right).
\end{eqnarray*}	

Hence finally,  $ \displaystyle\sup_{\theta \in [a,A]} | r_2(\theta)|$ is a.s. bounded as $n \to \infty$.

$\bullet$ \textbf{Term $r_3(\theta) $}:

\begin{eqnarray*}	
r_3(\theta)  &=&\sum_{i=1}^n (\R_{\theta}^{-1})_{ii}\epsilon_{-i} (y_i - \hat{Y}_{\theta,-i}(s_i)) \\
& =& 
\sum_{k=1}^p \sum_{i=1}^n (\R_{\theta}^{-1})_{ii} (\bbeta_0 - \hat{\bbeta}_{-i})_k [f_k(s_i) - \r_{\theta,-i}' \R_{\theta,-i}^{-1} \f^{(k)}_{-i}] (y_i - \hat{Y}_{\theta,-i}(s_i)).
\end{eqnarray*}	
We make the same observation on $f_k(s_i) - \r_{\theta,-i}' \R_{\theta,-i}^{-1} \f^{(k)}_{-i}$ as for $r_2(\theta)$. Hence we have 
\begin{eqnarray*}	
r_3(\theta)
& = &
\sum_{k=1}^p \sum_{i=1}^n (\bbeta_0 - \hat{\bbeta}_{-i})_k (\e_{i,n}' \R_{\theta}^{-1} \f^{(k)})  (y_i - \hat{Y}_{\theta,-i}(s_i)).
\end{eqnarray*}	
Hence, from \eqref{eq:bound:hat:beta} and Lemmas \ref{lem:eint:Rmun:un} and \ref{lem:y:minus:haty}, we have
\[
| r_3(\theta)  | \leq
\sum_{k=1}^p 
\left(
 \sum_{i=2}^{n-1} \left( \delta_{i,n} (\Delta_i + \Delta_{i+1}) \sup_{t \in [0,1]} Y(t)^2 \right)
+
\delta_{1,n} \sup_{t \in [0,1]} Y(t)^2
\right).
\]
Hence, $r_3(\theta)$ is almost surely bounded as $n \to \infty$.

$\bullet$ \textbf{Term $r_4(\theta) $}:

From Lemmas \ref{lem:eint:Rmun:un} and \ref{lem:unt:Rmun:un}, we have for $i \in \{ 2,...,n-1 \}$, $ \bar{\epsilon}_{-i} \leq K ( \Delta_i + \Delta_{i+1} )^2$ and for $i \in \{ 1,n \}$, $ \bar{\epsilon}_{-i} \leq K $. Hence
\begin{eqnarray*}
|r_4( \theta )|
& \leq & K (y_1 - \hat{Y}_{\theta,-1}(s_1) + \epsilon_{-1} )^2 
+ K (y_n - \hat{Y}_{\theta,-n}(s_n) + \epsilon_{-n} )^2 
\\ 
& & + \sum_{i=2}^{n-1}
(\Delta_i + \Delta_{i+1})^2
(y_i - \hat{Y}_{\theta,-i}(s_i) + \epsilon_{-i} )^2 \\
& \leq &
K \sup_{t \in [0,1]} Y(t)^2
+ K \sum_{i=2}^{n-1}
(\Delta_i + \Delta_{i+1})^2
\sup_{t \in [0,1]} Y(t)^2,
\end{eqnarray*}
using also Lemma \ref{lem:y:minus:haty}, and remarking that the previous treatment of $r_2(\theta)$ implies that $\epsilon_{-i}^2 \leq K \sup_{t \in [0,1]} Y(t)^2$. Hence $r_4(\theta)$ is a.s. bounded as $n \to \infty$.

\end{proof}

\section*{Acknowledgements}

We are grateful to Jean-Marc Azais, for constructive discussions on the topic of this manuscript. This work was partly funded by the ANR project PEPITO.
We thank the associate editor and two referees, whose suggestions lead to a broader scope and an improved presentation of the manuscript.

\bibliographystyle{plain}
\bibliography{Fixed_domain_asymptotics_CV_REVISION_FINAL}
\end{document}